\DeclareMathAlphabet{\cyrm}{U}{UWCyr}{m}{n}
\DeclareSymbolFont{cyrm}{U}{UWCyr}{m}{n}
\DeclareSymbolFontAlphabet{\cyrm}{cyrm}
\DeclareMathSymbol{\Evo}{\cyrm}{cyrm}{"03}
\newtheorem{theorem}{Theorem}[section]
\newtheorem{definition}[theorem]{Definition}
\newtheorem{example}[theorem]{Example}
\newtheorem{lemma}[theorem]{Lemma}
\newtheorem{proposition}[theorem]{Proposition}
\newtheorem{remark}[theorem]{Remark}
\newenvironment{proof}[1][Proof]{\noindent\textbf{#1.} }{\
\rule{0.5em}{0.5em}}
\begin{document}

\title{Normal forms for parabolic Monge-Amp\`{e}re equations}
\author{R. Alonso Blanco, G. Manno, F. Pugliese}
\maketitle

\begin{abstract}
We find normal forms for parabolic Monge-Amp\`{e}re equations. Of
these, the most general one holds for any equation admitting a
complete integral. Moreover, we explicitly give the determining
equation for such integrals; restricted to the analytic case, this
equation is shown to have solutions. The other normal forms
exhaust the different classes of parabolic Monge-Amp\`{e}re
equations with symmetry properties, namely, the existence of
classical or nonholonomic intermediate integrals. Our approach is
based on the equivalence between parabolic Monge-Amp\`{e}re
equations and particular distributions on a contact manifold, and
involves a classification of vector fields lying in the contact
structure. These are divided into three types and described in
terms of the simplest ones (characteristic fields of $1^{st}$
order PDE's).

\end{abstract}

%\tableofcontents

\section{Introduction}

In the present paper we give a contribution to the problem of classifying
Monge-Amp\`{e}re equations (MAE) up to contact transformations. MAE's are
second order equations of the form%
\begin{equation}
N(z_{xx}z_{yy}-z_{xy}^{2})+Az_{xx}+Bz_{xy}+Cz_{yy}+D=0\text{,}
\label{MA_coordinate_bis}%
\end{equation}
in the unknown function $z=z(x,y)$, with coefficients $A$, $B$,
$C$, $D$, $N$ depending on $x$, $y$, $z$, $z_{x}$, $z_{y}$. As is
well known, any contact transformation maps a MAE into another
one. Therefore, a major problem concerning equations
(\ref{MA_coordinate_bis}) is their classification under the action
of the contact pseudogroup. An aspect of this problem consists in
finding \textit{normal forms}, i.e. some particularly simple model
equations, depending on functional parameters, such that any MAE
is locally contact equivalent to one and only one of them, for a
suitable choice of the parameters.

Below, we find normal forms of \textit{parabolic} MAE's, i.e. equations
(\ref{MA_coordinate_bis}) satisfying $B^{2}-4AC+4ND=0$. Geometrically, this
means that characteristic directions at any point of the $1$-jet bundle
$J^{1}(\tau)=\{(x,y,z,z_{x},z_{y})\}$ of the trivial bundle $\tau
:\mathbb{R}^{2}\times\mathbb{R}\rightarrow\mathbb{R}^{2}$ define a
$2$-dimensional subdistribution $\mathcal{D}$ of the contact distribution
$\mathcal{C}$:
\begin{equation}
\mathcal{C}=\{U=0\},\,\,\,\text{with}\,\,\,U=dz-z_{x}dx-z_{y}dy.
\label{Cartan_form}%
\end{equation}
As $\mathcal{D}$ is generally non integrable, it is necessary to consider also
its derived flag
\begin{equation}
\mathcal{D}\,\subset\,\mathcal{D}^{\prime}=\mathcal{D}+[\mathcal{D},\mathcal{D}]\,\subset\,\mathcal{D}^{\prime
\prime}=\mathcal{D}'+[\mathcal{D}',\mathcal{D}'] \label{derived flag}%
\end{equation}
whose properties allow to obtain important classification results
on parabolic MAE's in a simple and straightforward way. In fact,
such a study is based, to a large extent, on the geometry of
\textit{Cartan fields}, i.e. sections of $\mathcal{C}$. Quite
unexpectedly, generic Cartan fields are not contained in any
integrable $2$-dimensional subdistribution of $\mathcal{C}$. The
degree of \textquotedblleft genericity" of a Cartan field $X$ is
measured by a simple invariant, its \textit{type}: the higher the
type, the less symmetric $X$ is with respect to $\mathcal{C}$.
More precisely, $X$ is of type $2,3$ or $4$ if it is contained in
many, one or no integrable $2$-dimensional subdistribution of
$\mathcal{C}$, respectively (the operative definition of type is
given in section \ref{subsec_Type_Cartan_Field}).

\smallskip The main classification results in the present paper are summarized
by the following two theorems.

\begin{theorem}
\label{maintheor1} Let (\ref{MA_coordinate_bis}) be a parabolic MAE with
$C^{\infty}$ coefficients on some domain of $J^{1}(\tau)$. Then
(\ref{MA_coordinate_bis}) is locally contact equivalent to an equation of the
form%
\begin{equation}
z_{yy}-2az_{xy}+a^{2}z_{xx}=b\text{,\ } \label{Bryant_bis}%
\end{equation}
with $a,b\in C^{\infty}(J^{1}(\tau))$, if and only if it admits a complete integral.
\end{theorem}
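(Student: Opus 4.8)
The statement is an "if and only if": a parabolic MAE is contact-equivalent to the Bryant-type normal form (\ref{Bryant_bis}) iff it admits a complete integral. Let me think about both directions.

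For the forward direction (normal form ⟹ complete integral): equation (\ref{Bryant_bis}), $z_{yy} - 2az_{xy} + a^2 z_{xx} = b$, should manifestly have a complete integral because of its special structure. The left side is $(\partial_y - a\partial_x)^2 z$ roughly — the characteristic direction is encoded by a single vector field $\partial_y - a\partial_x$ (up to lower-order terms). A complete integral is a 2-parameter family of solutions whose 1-jets foliate $J^1(\tau)$. So I'd want to exhibit this family explicitly, or at least argue its existence from the triangular/decoupled structure.

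For the reverse direction (complete integral ⟹ normal form): this is the substantial part. The strategy should go through the geometric reformulation: a parabolic MAE corresponds to a 2-dimensional subdistribution $\mathcal{D} \subset \mathcal{C}$ of the contact distribution. The characteristic distribution $\mathcal{D}$ contains a distinguished line field (the Cauchy characteristic of $\mathcal{D}$, or the direction common to all characteristics). A complete integral gives a foliation of $J^1(\tau)$ by Legendrian surfaces that are solutions, i.e., integral surfaces of $\mathcal{C}$ tangent to $\mathcal{D}$. This foliation is extra structure; I expect it forces a Cartan field spanning part of $\mathcal{D}$ to be of low type (type 2), and then the classification of type-2 Cartan fields (promised in the excerpt as being describable via characteristic fields of first-order PDEs) lets one straighten things out. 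Concretely: pick coordinates adapted to the complete integral so that the leaves become $\{z_x = \text{const}, \text{something}\}$... actually more carefully, a complete integral $S(x,y,z;a_1,a_2)=0$ lets us use $a_1, a_2$ (and two more functions) as new coordinates; in these coordinates the contact structure and the distribution $\mathcal{D}$ should take a standard shape, from which reading off the equation gives (\ref{Bryant_bis}).

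**Key steps, in order.** (1) Recall/set up the correspondence between parabolic MAEs and pairs $(\mathcal{C}, \mathcal{D})$ with $\dim\mathcal{D}=2$; identify the characteristic line field $\ell \subset \mathcal{D}$. (2) Show the normal form (\ref{Bryant_bis}) admits a complete integral — the easy direction — by writing the characteristic vector field as $D_a = \partial_y - a\partial_x + (\text{terms})$ and integrating. (3) For the converse, translate "admits a complete integral" into: there is a 2-parameter foliation $\mathcal{F}$ of $J^1(\tau)$ by solution surfaces. (4) Extract from $\mathcal{F}$ the fact that the characteristic field (a suitable Cartan field $X$ spanning $\ell$) is tangent to the leaves, hence lies in an integrable 2-distribution, hence has type 2; invoke the earlier structural description of type-2 Cartan fields. (5) Use that description to build a contact transformation carrying $X$, and with it $\mathcal{D}$ and $\mathcal{C}$, to the model in which the PDE is (\ref{Bryant_bis}); the functions $a, b$ emerge as the residual invariants.

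**Main obstacle.** The hard part will be step (4)–(5): showing that the existence of the complete integral is exactly equivalent to the relevant Cartan field being type 2 (not merely implied by it), and then that type 2 suffices to reach precisely the Bryant form rather than something weaker. I expect one must be careful that a complete integral is a genuinely 2-parameter family with independent 1-jets (nondegeneracy), and that this nondegeneracy is what pins down the coordinate normalization. There may also be a subtlety in that $\mathcal{D}$ itself need not be integrable (its derived flag (\ref{derived flag}) is generically nontrivial), so the foliation $\mathcal{F}$ is tangent to $\mathcal{D}$ only along each leaf, not as a distribution — one has to work with the line field $\ell$ and the 3-dimensional $\mathcal{D}'$ rather than with $\mathcal{D}$ directly. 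Handling this interplay between the 2-, 3-, and 4-dimensional pieces of the flag, and checking that the constructed contact transformation is well-defined on a full neighborhood, is where the real work lies.
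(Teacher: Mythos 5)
Your overall strategy is the same as the paper's: convert the complete integral into a Cartan field of low type lying in the characteristic distribution $\mathcal{D}$, normalize that field by a contactomorphism, and read off the equation from the normalized $\mathcal{D}$. However, there is a genuine error in steps (4)--(5) that would derail the argument: you repeatedly assert that the field extracted from the complete integral has \emph{type $2$}, and that you will then invoke the classification of type-$2$ Cartan fields. By the paper's own definitions (and Theorem \ref{Teoremone}), belonging to a $2$-dimensional integrable subdistribution of $\mathcal{C}$ characterizes fields of type $2$ \emph{or} $3$, not type $2$; and for a generic parabolic MAE the field spanning $\mathcal{D}\cap\widehat{\mathcal{D}}$ is genuinely of type $3$, since a type-$2$ field exists in $\mathcal{D}$ only when $\dim\mathcal{D}''\leq 4$ (Theorem \ref{Lychagin_Distributions}), which is precisely the non-generic situation of Theorem \ref{maintheor2}. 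If you normalize with the type-$2$ models $\partial_{p}$ or $\partial_{p}+z\partial_{q}$ of Theorem \ref{Campi_Tipo2}, you only reach the special forms $1)$--$3)$ of Theorem \ref{maintheor2}, and the ``if and only if'' of Theorem \ref{maintheor1} fails. The proof must instead use the weaker normal form valid for all fields of type $<4$, namely $Z=\partial_{p}+a\partial_{q}$ with $a$ arbitrary (point $3)$ of Theorem \ref{Teoremone}); completing $Z$ by its $dU$-orthogonal complement in $\mathcal{C}$ gives the second generator $W=\widehat{\partial}_{y}-a\widehat{\partial}_{x}+b\partial_{q}$, and $\omega=Z(U)\wedge W(U)$ is exactly (\ref{Bryant_bis}). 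Relatedly, your ``main obstacle'' --- proving that a complete integral is \emph{equivalent} to the field being type $2$ --- is not an obstacle to overcome but a false statement to avoid; the correct equivalence (complete integral $\Leftrightarrow$ some $Z\in\mathcal{D}$ of type $<4$) is Proposition \ref{Equivalenza_Intermedio_Completo}.

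Two smaller points. First, there is in general no intrinsic ``characteristic line field $\ell\subset\mathcal{D}$'': every direction in $\mathcal{D}$ is characteristic, and the distinguished line is $\mathcal{D}\cap\widehat{\mathcal{D}}$, which depends on the chosen complete integral $\widehat{\mathcal{D}}$ (a canonical line $(\mathcal{D}')^{\bot}$ exists only in the non-generic case). Second, the easy direction does not require exhibiting or integrating a family of classical solutions along $\partial_{y}-a\partial_{x}$: the field $\partial_{p}+a\partial_{q}$ is characteristic for (\ref{Bryant_bis}) and lies in the integrable lagrangian distribution $\langle\partial_{p},\partial_{q}\rangle$, on which $\omega$ vanishes; its leaves are (generalized) solutions and already constitute the complete integral, with no PDE to solve.
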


Roughly speaking, a complete integral of (\ref{MA_coordinate_bis})
is any $3$-parametric family of solutions (see the more rigorous
Definition \ref{Integrale_Completo}). The existence of such a
family does not seem to be a strong condition on
(\ref{MA_coordinate_bis}); in fact, in section \ref{subsubsec:
A_rem_ex}, we provide a very large class of smooth parabolic MAE's
admitting a complete integral. Note that normal form
(\ref{Bryant_bis}) was proved to be true for every parabolic MAE
with \emph{real analytic coefficients} (\cite{Bryant Griffiths}).
In that paper, the proof essentially consisted in showing the
involutivity of a certain exterior differential system associated
with (\ref{MA_coordinate_bis}) and then applying Cartan-K\"{a}hler
theorem to such a system; below (Theorem \ref{Better_Than_Bryant})
we give an easier and more direct proof which makes use only of
Cauchy-Kovalevsky existence theorem. An immediate corollary of
this theorem and Theorem \ref{maintheor1} is the existence of a
complete integral for any real analytic parabolic MAE.

As it will be shown in section \ref{subsubsec:The_anal_case}, the existence of
a complete integral is equivalent to that of a \textit{generalized
intermediate integral}, i.e. a Cartan field of type less than $4$ contained in
$\mathcal{D}$. This is a generalization of both the classical (\cite{Goursat})
and the nonholonomic (\cite{KLR}) notion of intermediate integral; in fact, a
classical intermediate integral $f\in C^{\infty}(J^{1}(\tau))$ of
(\ref{MA_coordinate_bis}) can be identified with a \textit{hamiltonian field}
$X_{f}$ (a special kind of type $2$ Cartan field, see Definition
\ref{Hamiltonian_Field}) belonging to $\mathcal{D}$, while a nonholonomic
intermediate integral is any Cartan field of type $2$ in $\mathcal{D}$.

The existence of intermediate integrals of equation (\ref{MA_coordinate_bis})
is strictly linked to integrability properties of the derived flag
(\ref{derived flag}).

\begin{theorem}
\label{maintheor2} Let (\ref{MA_coordinate_bis}) be a parabolic MAE and
$\mathcal{D}$ be the corresponding characteristic distribution. If
$\dim\mathcal{D}^{\prime\prime}<5$ then equation (\ref{MA_coordinate_bis}) can
always be locally reduced by a contactomorphism to one of the following forms:

\begin{itemize}
\item[1)] $z_{yy}=0,$ when $\mathcal{D}$ is integrable;

\item[2)] $z_{yy}=b$, \ $b\in C^{\infty}(J^{1}(\tau))$,\ $\partial_{z_{x}}(b)\neq0$, when $\mathcal{D}%
^{\prime\prime}$ is $4$-dimensional and integrable;

\item[3)] $z_{yy}-2zz_{xy}+z^{2}z_{xx}=b$, \ $b\in
C^{\infty}(J^{1}(\tau))$
with $\partial_{z_{x}}(b)+z\partial_{z}(b)\neq0,$ \ when $\mathcal{D}%
^{\prime\prime}$ is $4$-dimensional and non integrable.
\end{itemize}
\end{theorem}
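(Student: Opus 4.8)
I would argue entirely on the distribution side. To (\ref{MA_coordinate_bis}) there corresponds an effective $2$-form on $J^{1}(\tau)$, and $\mathcal{D}$ is the kernel of its restriction to $\mathcal{C}$; two parabolic MAE's are locally contact equivalent exactly when a contactomorphism of $J^{1}(\tau)$ carries one characteristic distribution onto the other. So it is enough to show that, in each of the three regimes, $(\mathcal{C},\mathcal{D})$ is locally contactomorphic to the pair attached to the corresponding model equation; that the models lie in these regimes is a direct computation, their characteristic distributions being $\langle\partial_{z_{x}},\ \partial_{y}+z_{y}\partial_{z}\rangle$ for $z_{yy}=0$, $\langle\partial_{z_{x}},\ \partial_{y}+z_{y}\partial_{z}+b\,\partial_{z_{y}}\rangle$ for $z_{yy}=b$, and $\langle\,\partial_{z_{x}}+z\,\partial_{z_{y}},\ \partial_{y}-z\partial_{x}+(z_{y}-zz_{x})\partial_{z}+b\,\partial_{z_{y}}\,\rangle$ for the third, whereupon the derived flags and the stated inequalities on $b$ are immediate.

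I would first check that the trichotomy is exhaustive under $\dim\mathcal{D}''<5$. Parabolicity makes $\mathcal{D}$ Lagrangian for $dU|_{\mathcal{C}}$, equivalently $\mathcal{D}'\subseteq\mathcal{C}$; moreover $\dim\mathcal{D}'\le 3$, since $\mathcal{D}'$ is spanned by a local frame of $\mathcal{D}$ together with one bracket. Hence, if $\mathcal{D}$ is not integrable, then $\dim\mathcal{D}'=3$ and $\mathcal{D}'$ is a rank-$3$ subdistribution of $\mathcal{C}$; such a distribution can never be integrable, because an integral submanifold of $\mathcal{C}$ has dimension at most $2$, so $\dim\mathcal{D}''\ge 4$. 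Under the hypothesis this leaves only $\mathcal{D}$ integrable (case 1), or $\dim\mathcal{D}'=3$ and $\dim\mathcal{D}''=4$, in which case the remaining invariant is the integrability of $\mathcal{D}''$ (cases 2 and 3).

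In case 1, $\mathcal{D}$ is an integrable rank-$2$, hence Legendrian, subdistribution of $\mathcal{C}$; every nonzero Cartan field in $\mathcal{D}$ lies in the integrable plane field $\mathcal{D}$, so it has type $\le 3$, and by the normal forms of Section~\ref{subsec_Type_Cartan_Field} a contactomorphism carries $\mathcal{D}$ to $\langle\partial_{z_{x}},\ \partial_{y}+z_{y}\partial_{z}\rangle$, the characteristic distribution of $z_{yy}=0$. In cases 2 and 3, $\mathcal{D}'$ is rank-$3$ and coisotropic in $\mathcal{C}$, and its $dU|_{\mathcal{C}}$-orthogonal $\ell=(\mathcal{D}')^{\perp}$ is a contact-invariant line field contained in $\mathcal{D}$ (as $(\mathcal{D}')^{\perp}\subseteq\mathcal{D}^{\perp}=\mathcal{D}$), spanned—because $\dim\mathcal{D}''<5$—by Cartan fields of type $\le 3$. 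Writing $\mathcal{D}''=\ker\theta$, I would note that in case 3 the rank-$3$ distribution $\mathcal{D}'$ is isotropic for $d\theta|_{\mathcal{D}''}$ (immediate from $[\mathcal{D}',\mathcal{D}']\subseteq\mathcal{D}''$), which forces $d\theta$ to have rank $2$ along $\mathcal{D}''$; consequently $\theta$ has class $3$, so $\mathcal{D}''$ is the pull-back of a contact structure on a $3$-manifold and $\mathrm{Ch}(\mathcal{D}'')$ is a canonical rank-$2$ Legendrian foliation. One then normalizes successively: the contact structure by Darboux; $\ell$, resp.\ $\mathrm{Ch}(\mathcal{D}'')$ and the induced contact form on the quotient $3$-manifold, using Section~\ref{subsec_Type_Cartan_Field} and the local normal form for Legendrian fibrations; and $\mathcal{D}''$ itself, to $\ker dx$ in case 2 and to $\ker(dx+z\,dy)$ in case 3. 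Once $\mathcal{C}$, $\mathcal{D}''$ and $\mathcal{D}'$ are in standard position, $\ell=(\mathcal{D}')^{\perp}$ is the standard line $\langle\partial_{z_{x}}\rangle$, resp.\ $\langle\partial_{z_{x}}+z\,\partial_{z_{y}}\rangle$—the coefficient $z$ being forced by $\ell\subseteq\mathcal{D}$ and the Lagrangian character of $\mathcal{D}$—and $\mathcal{D}$ is spanned by it together with one complementary Cartan field carrying a single functional parameter $b$. Non-integrability of $\mathcal{D}$, forced by $\dim\mathcal{D}'=3$, becomes the stated inequality, and reconstituting the effective $2$-form gives $z_{yy}=b$, resp.\ $z_{yy}-2zz_{xy}+z^{2}z_{xx}=b$.

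The work that requires care, and where case 3 is hardest, is carrying out these normalizations simultaneously and in the right order. Because $\mathcal{D}''$ is non-integrable there is no Frobenius chart to begin from, so the contactomorphism must be built from the nested invariants $\ell\subset\mathcal{D}\subset\mathcal{D}'\subset\mathcal{D}''$ and $\mathrm{Ch}(\mathcal{D}'')$, their interaction with $\mathcal{C}$ and $dU$, and the description of type-$\le 3$ Cartan fields as controlled combinations of characteristic fields of first-order PDE's; straightening $\mathcal{C}$, then $\mathrm{Ch}(\mathcal{D}'')$, then $\mathcal{D}''$, then $\mathcal{D}$ in this order is what keeps the resulting map a contactomorphism. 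In case 2 the analogous point—making the Frobenius straightening of $\mathcal{D}''$ compatible with $\mathcal{C}$—is disposed of identically, by normalizing $\ell$ and $\mathcal{D}'$ before straightening the last leaf.
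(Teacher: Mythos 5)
Your reduction to the classification of the pair $(\mathcal{C},\mathcal{D})$, your trichotomy via the derived flag, and your identification of the canonical line $\ell=(\mathcal{D}')^{\bot}\subset\mathcal{D}$ and of $\mathcal{D}''$ as the governing invariants all coincide with the paper's architecture (Theorem \ref{Forme_Normali_Distribuzioni} plus the computation of $\omega=X(U)\wedge Y(U)$ at the end). Case 1 is fine: integrability of $\mathcal{D}$ plus Theorem \ref{Caratterizzazione_Integrabilita} and item 5) of Theorem \ref{th.sostiutivo} give $\langle\partial_{p},\partial_{q}\rangle$, hence $z_{yy}=0$ after a Legendre transformation. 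The gap is in cases 2 and 3, and it sits exactly where you say the work "requires care": you assert, but do not prove, that a contactomorphism can put $\mathcal{D}''$ into the standard position $\ker dx$, resp.\ $\ker(dx+z\,dy)$, while keeping $\mathcal{C}$ in Darboux form. This simultaneous normalization of the pair $(U,\theta)$ is the actual content of the theorem; Darboux for $\mathcal{C}$ and Darboux/class-3 normal form for $\theta$ separately do not combine, because after fixing $\mathcal{C}$ the residual freedom is only the contact pseudogroup, and transitivity of its action on the admissible $\theta$'s is precisely what must be shown. In the paper this is Theorem \ref{Campi_Tipo2}: one applies Darboux to the $1$-form $\sigma$ cutting out $(X^{\bot})'=\mathcal{D}''$ (getting $\sigma=df$ or $\sigma=df-g\,dh$), and then uses the type-$2$ identity $X^{2}(U)\equiv 0 \bmod\langle U,X(U)\rangle$ to solve explicitly for $U$ as a combination of $dg,df,dh$, which hands you the contact chart $x=f$, $y=h$, $z=-g$, $p=\mu/X_{h}(f)$, $q=\dots$ in which $X=\partial_{p}+z\partial_{q}$. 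Your proposed order of straightenings ($\mathcal{C}$, then $\mathrm{Ch}(\mathcal{D}'')$, then $\mathcal{D}''$, then $\mathcal{D}$) is a plan for this computation, not a substitute for it.

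A second, repairable, imprecision feeds into this: you claim only that $\ell$ is spanned by a field of type $\le 3$. In fact, since a generator $X$ of $\ell$ satisfies $X^{\bot}=\mathcal{D}'$ and hence $(X^{\bot})'=\mathcal{D}''$ of dimension $4$, Proposition \ref{Ortogonale_Tipo_2} forces $X$ to be of type exactly $2$ (the paper even records separately that such a generator is never of type $3$). This matters: the normal forms $\partial_{p}$ and $\partial_{p}+z\partial_{q}$ are available only for type-$2$ fields (Theorem \ref{Campi_Tipo2}); for a general field of type $\le 3$ one only gets $a\partial_{p}+b\partial_{q}$ (Theorem \ref{Teoremone}), which is not enough to pin down $\mathcal{D}''$ as $\ker dx$ or $\ker(dx+z\,dy)$. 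Once $X$ is known to be of type $2$ and Theorem \ref{Campi_Tipo2} is invoked, the rest of your argument (one functional parameter $b$ for the complementary generator in $X^{\bot}$, non-integrability of $\mathcal{D}$ translating into $\partial_{p}(b)\neq 0$, resp.\ $\partial_{p}(b)+z\partial_{z}(b)\neq 0$) goes through and matches the paper.
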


On the other hand, the three cases can be stated in terms of
intermediate integrals of (\ref{MA_coordinate_bis}), namely:

\begin{itemize}
\item[$1^\prime)$] There exist three (functionally independent)
intermediate integrals;

\item[$2^\prime)$] There exists just one intermediate integral;

\item[$3^\prime)$] There are no (classical) intermediate integrals
but there is exactly one nonholonomic intermediate integral.
\end{itemize}

%Cases $1)$ and $2)$ of Theorem $\ref{maintheor2}$ have been
%already proved (although in a different way) by Goursat
%(\cite{Goursat}) and Tunitskii (\cite{Tunitsky}), respectively,
%while case $3)$ is a new result in the $C^{\infty}$ category (see
%\cite{Bryant Griffiths} for the real analytical version).

The three normal forms of Theorem \ref{maintheor2} were already
known (\cite{Goursat,Tunitsky,Bryant Griffiths} respectively).
However, the alternative characterizations in terms of
intermediate integrals are original. Moreover, the conditions
given in \cite{Bryant Griffiths} for the validity of normal form
$3)$ and the relative proof are completely different and, in our
opinion, considerably more complicated and less transparent than
ours: in fact, it must be emphasized that our conditions are
easily computable for any given MAE.

\medskip The paper is structured as follows. In section \ref{sec.preliminary},
approximately following the approach of (\cite{KLR,Lychagin
contact}), the necessary preliminary notions on MAE's in the
framework of jet bundle formalism are given. Furthermore, the
equivalence between parabolic MAE's and lagrangian
subdistributions of $\mathcal{C}$ is explained.

Section \ref{sec:geom.cartan.fields} is devoted to the geometry of Cartan
fields. We begin by studying the contact analogous of hamiltonian fields of
symplectic geometry (in fact, they are the classical characteristic fields of
first order PDE's), along with several characterizations and properties. Then
(section \ref{subsec_Type_Cartan_Field}), we introduce the type of a Cartan
field $X$ as the rank of the system of Lie derivatives $U,X(U),X^{2}(U),...$.
Cartan fields of type $2$ or $3$ are characterized as linear combinations of
involutive hamiltonian fields or, equivalently, as those belonging to
integrable $2$-dimensional subdistributions of $\mathcal{C}$ (Theorem
\ref{Teoremone}); according with this property, normal forms are derived
(Theorems \ref{Tipo2_Tipo3}, \ref{Campi_Tipo2}).

In the final section, the main theorems are proved. In section
\ref{sec.inter.integrals} intermediate integrals, in the three senses
explained above (classical, nonholonomic, generalized), are considered. In
particular, we prove the aforementioned relations between existence of
intermediate integrals and integrability properties of $\mathcal{D}$ (Theorems
\ref{Goursat_Distributions}, \ref{Lychagin_Distributions} and Proposition
\ref{Equivalenza_Intermedio_Completo}). Using these results and normal forms
of Cartan fields, Theorem \ref{maintheor1} is proved (section
\ref{subsec_Maintheor1}). The following section contains the already mentioned
example of a wide class of parabolic MAE's admitting a complete integral,
together with an explicit computation of it. In Theorem
\ref{Better_Than_Bryant} we find the determining equation
(\ref{eq:equazione_impossibile}) for generalized intermediate integrals of
(\ref{MA_coordinate_bis}) and apply Cauchy-Kovalevsky theorem to prove the
existence of a solution in the analytic case: for what we said above, this is
equivalent to the existence of a complete integral of (\ref{MA_coordinate_bis}%
). Finally, results of section \ref{sec:geom.cartan.fields} on normal forms of
Cartan fields allow to obtain normal forms for degenerate lagrangian
distributions (Theorem \ref{Forme_Normali_Distribuzioni}), from which Theorem
\ref{maintheor2} immediately follows.

\paragraph{Notation and conventions.}
Throughout this paper, everything is supposed to be $C^{\infty}$
and local. For this reason, we do not lose in generality by
working with jets of sections rather than of submanifolds. For
simplicity, when $X$ is a vector field and $\mathcal{P}$ is a
distribution on the same manifold, we write
\textquotedblleft$X\in\mathcal{P}$" to mean that $X$ is a smooth
(local) section of tangent subbundle $\mathcal{P}$. We will use
$X(T)$ to denote the Lie derivative of a tensor $T$ along $X$.
Finally, first and
second order jet coordinates will be indifferently denoted with $z_{x}%
,z_{y},z_{xx},z_{xy},z_{yy}$ or $p,q,r,s,t$, respectively.

\section{Preliminary notions}

\label{sec.preliminary}

\subsection{Jet bundles and contact distribution}

Here we give the main definitions used in the present work. By $J^{r}(\tau)$
we denote the $r$-jet of the trivial bundle $\tau:\mathbb{R}^{2}%
\times\mathbb{R}\to\mathbb{R}^{2}$, i.e., the vector bundle of $r$-jets of
smooth functions on $\mathbb{R}^{2}$. These are equivalence classes of smooth
functions on $\mathbb{R}^{2}$ possessing the same partial derivatives up to
$r$-th order at a given point. Jet bundles of different orders are linked by
the obvious projections:
\[
\cdots\longrightarrow J^{2}(\tau)\overset{\tau_{2,1}}{\longrightarrow}%
J^{1}(\tau)\overset{\tau_{1,0}}{\longrightarrow}\mathbb{R}^{2}\times
\mathbb{R}\overset{\tau}{\longrightarrow}\mathbb{R}^{2}.
\]
For any $f\in C^{\infty}(\mathbb{R}^{2})$, let
$j_{r}f:\mathbb{R}^{2}\ni p\mapsto[f]^{r}_{p}\in J^{r}(\tau)$,
where $[f]^{r}_{p}$ is the $r$-jet of $f$ in $p$, be its $r$-th
order prolongation. \emph{R-planes} are the tangent planes to
graphs of $r$-th order prolongations. For any $[f]^r_p\in
J^{r}(\tau)$, $R$-planes passing through it biunivocally
correspond to $(r+1)$-jets projecting on $[f]^r_p$: namely,
$[f]^{r+1}_{p}$ corresponds to the tangent plane
$R_{[f]^{r+1}_{p}}$ to the graph of $j_{r}f$ at $[f]^{r}_{p}$
(\cite{Vinogradov and Co}).

\smallskip

A chart $(x,y,z)$ on the bundle $\tau$ induces a natural chart on each
$J^{r}(\tau)$. For instance $(x,y,z,p=z_{x},q=z_{y})$ are the induced
coordinates on $J^{1}(\tau)$ and $(x,y,z,p,q,r=z_{xx},s=z_{xy},t=z_{yy})$
those on $J^{2}(\tau)$. The $R$-plane $R_{\theta}\subset T_{\tau_{2,1}%
(\theta)}J^{1}(\tau)$ associated with point $\theta=(\bar{x},\bar{y},\bar
{z},\bar{p},\bar{q},\bar{r},\bar{s},\bar{t})\in J^{2}(\tau)$ is locally given
by
\[
R_{\theta}=<\left.  \widehat{\partial}_{x}\right|  _{\theta}+\bar{r}\left.
\partial_{p}\right|  _{\theta}+ \bar{s}\left.  \partial_{q}\right|  _{\theta
},\,\,\left.  \widehat{\partial}_{y}\right|  _{\theta}+\bar{s}\left.
\partial_{p}\right|  _{\theta}+\bar{t}\left.  \partial_{q}\right|  _{\theta
}>.
\]
where
\[
\widehat{\partial}_{x}\overset{\text{def}}{=}\partial_{x}+p\partial_{z},
\quad\widehat{\partial}_{y}\overset{\text{def}}{=}\partial_{y}+q\partial_{z}.
\]
The contact space $\mathcal{C}_{\theta}$ at $\theta\in J^{r}(\tau)$ is the
span of $R$-planes at $\theta$. The distribution $\theta\mapsto\mathcal{C}%
_{\theta}$ is then defined. From now on we shall focus on the case $r=1$. On
$J^{1}(\tau)$, $\mathcal{C}$ is given by:
\[
\mathcal{C}=<\widehat{\partial}_{x},\,\, \widehat{\partial}_{y},\,\,
\partial_{p},\,\,\partial_{q}>
\]
Dually, $\mathcal{C}$ is defined by $\{U=0\}$ where
\begin{equation}
\label{eq.U.vera.e.propria}U=dz-p\,dx-q\,dy.
\end{equation}
Note that, as what really matters is the distribution $\mathcal{C}$, one can
substitute $U$ with any other multiple of it. $(J^{1}(\tau),\mathcal{C})$ is a
contact manifold, i.e.
\begin{equation}
dU\wedge dU\wedge U\neq0 . \label{completa_non_integrablita}%
\end{equation}
Hence, by (\ref{eq.U.vera.e.propria}), $x,y,z,p,q$ are contact coordinates. As
is well known, (\ref{completa_non_integrablita}) is equivalent to the fact
that $\mathcal{C}$ does not admit integral submanifolds of dimension greater
than $2$, or also to the non existence of infinitesimal symmetries
$\mathcal{C}$ belonging to it. Furthermore, condition
(\ref{completa_non_integrablita}) is also equivalent to $(\mathcal{C}%
,dU|_{\mathcal{C}})$ being a symplectic vector bundle.

\medskip Note that, for any $X\in\mathcal{C}$, $X(U)=X\rfloor~dU$, i.e. one
can express orthogonality in $\mathcal{C}$ (with respect to $dU$) in terms of
Lie derivatives. For example, the orthogonal complement of $X$ in
$\mathcal{C}$ is described by
\[
X^{\bot}=\{U=0,~X(U)=0\}.
\]
In particular, $X^\bot$ is $3$-dimensional and contains $X$;
moreover, any $3$-dimensional subdistribution of $\mathcal{C}$ is
of this form. Analogously, if $\mathcal{D}\subset\mathcal{C}$ is a
distribution spanned by vector fields $X$, $Y$ then its orthogonal
complement is given by
\[
\mathcal{D}^{\bot}=\{U=0,X(U)=0,Y(U)=0\}.
\]
In particular, $\mathcal{D}$ is called a \emph{lagrangian distribution} if
$\mathcal{D}=\mathcal{D}^{\bot}$ (note that some other authors use the term
\emph{legendrian}).

\subsection{Parabolic Monge-Amp\`{e}re equations}

\label{sec.par.MAE}

Recall that a \emph{scalar differential equation} (in two independent
variables) $\mathcal{E}$ of order $r$ is a hypersurface of $J^{r}(\tau)$. A
\emph{solution} of $\mathcal{E}$ is a locally maximal integral manifold
$\Sigma$ of the restriction to $\mathcal{E}$ of the contact distribution on
$J^{r}(\tau)$; when $\Sigma$ is the graph of $j_{r}{f}$, with $f$ smooth
function on $\mathbb{R}^{2}$, then $\Sigma$ is a classical solution of
$\mathcal{E}$.

\smallskip Let $\mathcal{I}(U)\subset\Lambda^{*}(J^{1}(\tau))$ be the
differential ideal generated by $U$.

\begin{definition}
Let $\omega\in\Lambda^{2}\left(  J^{1}(\tau)\right)  \backslash\mathcal{I}%
(U)$. Let us associate with $\omega$ the scalar second order equation%
\begin{equation}
\mathcal{E}_{\omega}\overset{\text{def}}{=}\{\theta\in J^{2}(\tau)\text{ s.t.
}\omega|_{R_{\theta}}=0\}\text{,} \label{MA_Lychagin}%
\end{equation}
where $R_{\theta}\subset T_{\tau_{2,1}(\theta)}(J^{1}(\tau))$ is the $R$-plane
associated with $\theta$. The equations of this form are called
\emph{Monge-Amp\`{e}re equations} (see \cite{KLR,Lychagin contact}).
\end{definition}

In other words $\mathcal{E}_{\omega}$ is the differential equation
corresponding to the exterior differential system $\{U=0,\,\omega=0\}$.

\medskip\noindent\textbf{Coordinate expression}. Denote by $(x,y,z,p,q,r,s,t)$
a system of local contact coordinates on $J^{2}(\tau)$. In such a chart, a
generic MAE takes the form
\begin{equation}
N(rt-s^{2})+Ar+Bs+Ct+D=0\text{,} \label{MA_coordinate}%
\end{equation}
with $N,A,B,C,D\in C^{\infty}(J^{1}(\tau))$. The $2$-forms $\omega$ on
$J^{1}(\tau)$ such that $\mathcal{E}_{\omega}$ is given by
(\ref{MA_coordinate}) are
%\[%
\begin{align*}
\omega=D~dx\wedge dy +\left(  \frac{B}{2}+b\right)  dx\wedge dp &
+~C~dx\wedge dq-A~dy\wedge dp\label{MA_simplettica}\\
&  +\left(  -\frac{B}{2}+b\right)  ~dy\wedge dq+N~dp\wedge
dq+\alpha\wedge
U\text{,}%
\end{align*}
%\]
with arbitrary $b\in C^{\infty}(J^{1}(\tau))$, $\alpha\in\Lambda^{1}%
(J^{1}(\tau))$.

\medskip It is clear from the above formula that the correspondence
$\omega\longmapsto\mathcal{E}_{\omega}$ is not invertible. Let us consider in
$\Lambda^{2}(J^{1}(\tau))\backslash\mathcal{I}(U)$ the following equivalence
relation: \
\begin{equation}
\omega\sim\rho\quad\Longleftrightarrow\quad\exists\,\mu\neq
0,\lambda\in C^{\infty
}(J^{1}(\tau))\text{ s.t. }\rho|_{\mathcal{C}}=\mu\omega|_{\mathcal{C}%
}+\lambda(dU)|_{\mathcal{C}} \label{forme_equivalenti}%
\end{equation}
(or $\rho=\mu\omega+\lambda dU+\alpha\wedge U$ for some $1$-form $\alpha$).

\medskip It can be proved (see \cite{KLR}) that two $2$-forms on $J^{1}(\tau)$
are equivalent in the sense of (\ref{forme_equivalenti}) if and only if they
define the same MAE.

\begin{proposition}
For any $\omega\in\Lambda^{2}(J^{1}(\tau))\backslash\mathcal{I}(U)$, there are
at most two $2$-forms equivalent, up to a factor, to $\omega$ in the sense of
(\ref{forme_equivalenti}) and such that their restriction to $\mathcal{C}$ is
degenerate (so that they are decomposable).
\end{proposition}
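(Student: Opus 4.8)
The plan is to work inside the four-dimensional symplectic vector bundle $(\mathcal{C}, dU|_{\mathcal{C}})$ and to reduce the problem to a pointwise statement of linear algebra, fiber by fiber. Fix a point $\theta\in J^1(\tau)$. By the equivalence relation (\ref{forme_equivalenti}), what is really associated to a Monge–Amp\`ere equation at $\theta$ is the class of $\omega|_{\mathcal{C}_\theta}$ in the quotient of $\Lambda^2\mathcal{C}_\theta^*$ by the line spanned by the symplectic form $dU|_{\mathcal{C}_\theta}$, considered further up to a nonzero scalar. So the statement becomes: given a $2$-form $\omega$ on a $4$-dimensional vector space $V$ equipped with a symplectic form $\Omega$, and given that $\omega$ is not proportional to $\Omega$, there are at most two values $\lambda$ (up to overall scaling of the pair) for which $\omega+\lambda\Omega$ is degenerate, i.e. decomposable.

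First I would recall the standard fact that on a $4$-dimensional space a $2$-form $\eta$ is decomposable (equivalently degenerate as a skew pairing) if and only if $\eta\wedge\eta=0$. Using a symplectic volume form to identify $\Lambda^4 V^*\cong\mathbb{R}$, write $\eta\wedge\eta = \mathrm{Pf}(\eta)\,\Omega\wedge\Omega$, where $\mathrm{Pf}$ is (a multiple of) the Pfaffian, a quadratic form on $\Lambda^2 V^*$. Then the condition that $\omega+\lambda\Omega$ be degenerate is
\[
\mathrm{Pf}(\omega+\lambda\Omega)=0,
\]
and since $\mathrm{Pf}$ is a quadratic form, expanding gives
\[
\mathrm{Pf}(\omega)+\lambda\,\mathrm{B}(\omega,\Omega)+\lambda^2\,\mathrm{Pf}(\Omega)=0,
\]
where $\mathrm{B}$ is the associated symmetric bilinear form and $\mathrm{Pf}(\Omega)\neq 0$ because $\Omega$ is nondegenerate. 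This is a genuine quadratic equation in $\lambda$ (the leading coefficient is nonzero), hence has at most two roots $\lambda_1,\lambda_2$. Each root $\lambda_i$ gives a $2$-form $\omega+\lambda_i\Omega$ that is decomposable; these are, up to a factor, the only forms equivalent to $\omega$ whose restriction to $\mathcal{C}$ is degenerate, since any form equivalent to $\omega$ is of the shape $\mu\omega+\lambda dU$ on $\mathcal{C}$ and degeneracy of $\mu\omega+\lambda dU$ is the same as degeneracy of $\omega+(\lambda/\mu)\Omega$ (here $\mu\neq0$). This proves the count of \emph{at most two} at each point.

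The remaining point, and the one requiring a little care, is the passage from the pointwise linear-algebra statement to a statement about $2$-forms on the manifold $J^1(\tau)$: one must check that the two roots, when they exist and are smooth, assemble into smooth $2$-forms globally on the domain, or at least that the proposition — which only claims existence of \emph{at most} two such forms — is not sensitive to multiplicities or to points where the discriminant $\mathrm{B}(\omega,\Omega)^2 - 4\,\mathrm{Pf}(\omega)\,\mathrm{Pf}(\Omega)$ vanishes or changes sign. I expect the main (minor) obstacle to be exactly this regularity bookkeeping: when the discriminant is positive the two roots are smooth and distinct and define the two decomposable representatives (these are the two families of characteristics in the hyperbolic case); when it vanishes there is a single repeated root (the parabolic case, where $\mathcal{D}$ is defined by one decomposable form); and when it is negative there is no real root, so \emph{zero} such forms exist, which is still consistent with ``at most two''. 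Since the statement is an upper bound, no smoothness of the roots is actually needed for its validity, so the proof is essentially complete once the quadratic equation above is written down; I would only remark that the number of real decomposable representatives (two, one, or zero) is precisely what distinguishes hyperbolic, parabolic, and elliptic MAE's, tying this proposition to the classification that follows.
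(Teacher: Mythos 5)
Your proof is correct and follows essentially the same route as the paper: both reduce degeneracy of $\omega_{\lambda}=(\omega+\lambda\,dU)|_{\mathcal{C}}$ to the condition $\omega_{\lambda}\wedge\omega_{\lambda}=0$ on the $4$-dimensional fibers of $\mathcal{C}$, which after factoring out $(dU\wedge dU)|_{\mathcal{C}}$ is a quadratic equation in $\lambda$ with nonzero leading coefficient, hence has at most two roots. Your Pfaffian phrasing and the closing remarks on the sign of the discriminant merely restate the paper's coefficients $\alpha$, $k$ and its subsequent hyperbolic/parabolic/elliptic trichotomy.
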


\begin{proof}
The restriction to $\mathcal{C}$ of a $2$-form equivalent to $\omega$ is, up
to a factor, always of the form $\omega_{\lambda}=(\omega+\lambda
dU)|_{\mathcal{C}}$ with $\lambda\in C^{\infty}(J^{1}(\tau))$. On the other
hand, it is easy to see that $Rad~\omega_{\lambda}$ is non trivial if and only
if $\omega_{\lambda}\wedge\omega_{\lambda}=0$, i.e.
\begin{equation}
(\omega\wedge\omega+2\lambda\omega\wedge dU+\lambda^{2}dU\wedge
dU)|_{\mathcal{C}}=(\alpha+2k\lambda+\lambda^{2})(dU\wedge dU)|_{\mathcal{C}%
}=0\text{,} \label{equazione_lambda}%
\end{equation}
with
\[
(\omega\wedge\omega)|_{\mathcal{C}}=\alpha(dU\wedge dU)|_{\mathcal{C}}\text{
\thinspace\thinspace\ and \thinspace\thinspace\ }(\omega\wedge
dU)|_{\mathcal{C}}=k(dU\wedge dU)|_{\mathcal{C}}.
\]
As (\ref{equazione_lambda}) is quadratic in $\lambda$, the proposition is proved.
\end{proof}

\smallskip Note that the sign of the discriminant $k^{2}-\alpha$ in
(\ref{equazione_lambda}) is the same of the expression%
\[
\Delta=B^{2}-4AC+4ND.
\]

Let us recall the following basic notion.

\begin{definition}
Let $\mathcal{E}\subset J^{2}(\tau)$ be a second order scalar differential
equation, and let $\theta\in J^{1}(\tau)$. A line $r\subset\mathcal{C}%
_{\theta}$ is said to be \emph{characteristic} for $\mathcal{E}$ in $\theta$
if it belongs to more than one $R$-plane $R_{\widehat{\theta}}$, with
$\widehat{\theta}\in\mathcal{E}\cap\tau_{2,1}^{-1}(\theta)$.
\end{definition}

Characteristic directions are those belonging to more than one integral
manifold of $\mathcal{E}$: a curve $\gamma\subset J^{1}(\tau)$ (locally)
determines the integral surface passing through it if and only if the tangent
lines to $\gamma$ are not characteristic. In the case of MAE's, it is not
difficult to check that a line is characteristic for $\mathcal{E}_{\omega}$ if
and only if it belongs to the radical of a degenerate $2$-form $\omega
_{\lambda}$ equivalent to $\omega$. According to the previous proposition,
there are three possibilities:

\begin{itemize}
\item[1)] if $\Delta>0$, there are two distinct $\lambda$'s such that
$Rad~\omega_{\lambda}\neq0$; hence, there exist two distinct families of
characteristic lines (\emph{hyperbolic case});

\item[2)] if $\Delta=0$, there is just one $\lambda$ for which $\omega
_{\lambda}$ is degenerate; in this case there is only one family of
characteristics (\emph{parabolic case});

\item[3)] if $\Delta<0$, $\omega_{\lambda}$ is always non degenerate, so that
there are no characteristics (\emph{elliptic case}).
\end{itemize}

\noindent\emph{\textbf{Warning.}} As the paper is devoted to the parabolic case, from now on, when writing $\mathcal{E}%
_{\omega}$, we mean that $\omega$ is (up to a factor) the only
degenerate representative of the equation.

\begin{definition}
The $2$-dimensional distribution $\mathcal{D}=Rad~\omega|_{\mathcal{C}}$ is
called \emph{characteristic distribution} of the parabolic MAE $\mathcal{E}%
_{\omega}$.
\end{definition}

\begin{proposition}
Let $\mathcal{E}_{\omega}$ be a parabolic MAE. Then its characteristic
distribution $\mathcal{D}$ is lagrangian. Conversely, any lagrangian
distribution is characteristic for one and only one parabolic MAE.
\end{proposition}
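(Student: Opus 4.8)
The plan is to prove both directions of the claim by working with the equivalence class of $2$-forms representing a parabolic MAE and exploiting the symplectic structure $dU|_{\mathcal{C}}$.

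\smallskip\noindent\textbf{First direction (characteristic $\Rightarrow$ lagrangian).} Let $\mathcal{E}_\omega$ be parabolic and let $\mathcal{D}=\operatorname{Rad}\omega|_{\mathcal{C}}$, where $\omega$ is the (unique up to factor) degenerate representative. Since $\omega|_{\mathcal{C}}$ is a degenerate $2$-form on the $4$-dimensional symplectic vector bundle $(\mathcal{C},dU|_{\mathcal{C}})$ which is not identically zero (because $\omega\notin\mathcal{I}(U)$), its radical $\mathcal{D}$ is exactly $2$-dimensional. What I must show is $\mathcal{D}^\perp=\mathcal{D}$, i.e.\ for every $X\in\mathcal{D}$ and every $Y\in\mathcal{C}$ one has $dU(X,Y)=0$ iff $Y\in\mathcal{D}$; equivalently, $\mathcal{D}$ is isotropic for $dU|_{\mathcal{C}}$ (a $2$-dimensional isotropic subspace of a $4$-dimensional symplectic space is automatically lagrangian). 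The key computation: fix $X\in\mathcal{D}=\operatorname{Rad}\omega|_{\mathcal{C}}$, so $X\rfloor\,\omega|_{\mathcal{C}}=0$. I would contract the identity $(\omega\wedge dU)|_{\mathcal{C}}=k\,(dU\wedge dU)|_{\mathcal{C}}$ (the defining relation for $k$ from the proof of the preceding Proposition) with $X$, together with the parabolicity condition $k^2=\alpha$, where $(\omega\wedge\omega)|_{\mathcal{C}}=\alpha\,(dU\wedge dU)|_{\mathcal{C}}$. Contracting $\omega\wedge dU$ with $X$ and using $X\rfloor\,\omega=0$ gives a $3$-form proportional to $\omega\wedge(X\rfloor\,dU)$ up to sign, while contracting $k\,dU\wedge dU$ gives $2k\,dU\wedge(X\rfloor\,dU)$; a short bookkeeping argument with these, plus the fact that on a parabolic equation the quadratic $\lambda^2+2k\lambda+\alpha=(\lambda+k)^2$ has the double root $\lambda=-k$ (so that the degenerate representative is $\omega-k\,dU$, hence we may as well assume $k=0$ and $\alpha=0$ after normalizing), forces $X\rfloor\,dU$ to lie in the span of $\omega|_{\mathcal{C}}$-related forms and ultimately $dU(X,\mathcal{D})=0$. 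Thus $\mathcal{D}\subseteq\mathcal{D}^\perp$, and by dimension count ($\dim\mathcal{D}=2$, $\dim\mathcal{D}^\perp=4-2=2$) we get $\mathcal{D}=\mathcal{D}^\perp$.

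\smallskip\noindent\textbf{Second direction (lagrangian $\Rightarrow$ characteristic for a unique parabolic MAE).} Given a lagrangian distribution $\mathcal{D}\subset\mathcal{C}$, choose locally a frame $X,Y$ of $\mathcal{D}$ and consider the decomposable $2$-form $\omega_0=(X(U))\wedge(Y(U))$ on $J^1(\tau)$ — recall from the excerpt that $X(U)=X\rfloor\,dU$ for $X\in\mathcal{C}$, so $X(U)$ and $Y(U)$ are $1$-forms annihilating $\mathcal{C}^\perp$-direction; more precisely $\omega_0$ restricted to $\mathcal{C}$ is the (up to factor) unique decomposable $2$-form whose radical is $\mathcal{D}$, since $\mathcal{D}=\{U=0,\,X(U)=0,\,Y(U)=0\}$ by the lagrangian (self-orthogonal) condition. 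I would check $\omega_0\notin\mathcal{I}(U)$ (true because $X(U),Y(U)$ are independent modulo $U$ as $X,Y\notin\mathcal{C}^\perp$... in fact $\mathcal{D}$ being $2$-dimensional and lagrangian ensures $X(U)\wedge Y(U)\wedge U\neq 0$). Then $\mathcal{E}_{\omega_0}$ is a MAE; its discriminant is zero because $\omega_0|_{\mathcal{C}}$ is already degenerate with a $2$-dimensional radical, so by the trichotomy in the excerpt it is parabolic, and $\operatorname{Rad}\omega_0|_{\mathcal{C}}=\mathcal{D}$ by construction — so $\mathcal{D}$ is its characteristic distribution. Uniqueness: if $\mathcal{E}_\rho$ is another parabolic MAE with the same characteristic distribution $\mathcal{D}$, then (using the Warning) $\rho$ is its degenerate representative, so $\operatorname{Rad}\rho|_{\mathcal{C}}=\mathcal{D}=\operatorname{Rad}\omega_0|_{\mathcal{C}}$; two $2$-forms on a $4$-dimensional space, both of rank $2$ with the same $2$-dimensional radical, are proportional, hence $\rho|_{\mathcal{C}}=\mu\,\omega_0|_{\mathcal{C}}$ for a nonvanishing function $\mu$, which by the characterization (\ref{forme_equivalenti}) (with $\lambda=0$) means $\rho\sim\omega_0$, i.e.\ they define the same MAE.

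\smallskip\noindent\textbf{Expected main obstacle.} The delicate point is the first direction: translating parabolicity (a condition on the discriminant of a quadratic in $\lambda$) into the isotropy statement $\mathcal{D}\subseteq\mathcal{D}^\perp$ cleanly. The temptation is to argue coordinate-wise with (\ref{MA_coordinate}), but the conceptual argument is: parabolicity says the quadratic $\lambda^2+2k\lambda+\alpha$ is a perfect square, so after replacing $\omega$ by $\omega-k\,dU$ (which represents the same MAE) we may assume $\alpha=k=0$, meaning $\omega|_{\mathcal{C}}\wedge\omega|_{\mathcal{C}}=0$ \emph{and} $\omega|_{\mathcal{C}}\wedge dU|_{\mathcal{C}}=0$. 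The second identity, $\omega|_{\mathcal{C}}\wedge dU|_{\mathcal{C}}=0$ on a $4$-dimensional space, is precisely the statement that the $2$-plane $\operatorname{Rad}\omega|_{\mathcal{C}}$ is $dU$-isotropic (one contracts with two vectors spanning the radical and reads off that $dU$ vanishes on that plane). That is the crux; once it is in hand, the dimension count finishes the proof, and I would present exactly this normalization-then-wedge argument rather than grinding through the coefficients $A,B,C,D,N$.
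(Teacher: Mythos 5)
Your proof is correct and follows essentially the same route as the paper: normalize to the degenerate representative so that $\alpha=k=0$ in (\ref{equazione_lambda}), and read off isotropy of $\mathcal{D}=\operatorname{Rad}\omega|_{\mathcal{C}}$ from $\omega\wedge dU|_{\mathcal{C}}=0$, which is exactly the paper's one-line argument. Your converse (building $\omega_0=X(U)\wedge Y(U)$ and using that two rank-$2$ forms with the same radical on a $4$-dimensional space are proportional) is spelled out in more detail than in the paper, which relegates it to the remark following the proposition, but it is the same construction.
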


\begin{proof}
We must prove that $dU|_{\mathcal{D}}=0$, which is equivalent to $\omega\wedge
dU|_{\mathcal{C}}=0$. But, by equation (\ref{equazione_lambda}) and the
assumptions made, one has $\alpha=k=0$, and the proposition follows.
\end{proof}

\smallskip Note that, by the above proposition, a parabolic MAE can be
specified by assigning a lagrangian subdistribution of $\mathcal{C}$. In fact,
let $\mathcal{D}=<X,Y>$. Then the corresponding MAE is $\mathcal{E}_{\omega}$,
with
\[
\omega=X(U)\wedge Y(U).
\]
If the generators of $\mathcal{D}$ are locally expressed by%
\begin{equation}
X=\widehat{\partial}_{x}+R\partial_{p}+S\partial_{q}\,,\quad Y=\widehat
{\partial}_{y}+S\partial_{p}+T\partial_{q}\text{,} \label{generators_D}%
\end{equation}
with $R$, $S$, $T\in C^{\infty}(J^{1}(\tau))$, then%
\[
X(U)=dp-Rdx-Sdy\,,\quad Y(U)=dq-Sdx-Tdy\,,
\]
from which it follows that equation $\mathcal{E}_{X(U)\wedge Y(U)}\subset
J^{2}(\tau)$ is%
\[
(s-S)^{2}-(r-R)(t-T)=0.
\]
Recall that the Legendre transformation maps $\widehat{\partial}_{x}%
,\widehat{\partial}_{y},\partial_{p},\partial_{q}$ into $\partial_{p}%
,\partial_{q},-\widehat{\partial}_{x},-\widehat{\partial}_{y}$, respectively.
A partial Legendre transformation just exchanges $\widehat{\partial}_{x}$ with
$\partial_{p}$ or $\widehat{\partial}_{y}$ with $\partial_{q}$ (up to a sign).
Thus expression (\ref{generators_D}) is the most general coordinate
representation of $\mathcal{D}$, up to contact transformations.

\section{Geometry of Cartan fields}

\label{sec:geom.cartan.fields} As the contact distribution $\mathcal{C}$ is
completely non integrable, the flow of any Cartan field $X\in\mathcal{C}$
deforms it; the sequence of iterated Lie derivatives%
\begin{equation}
U,X(U),X^{2}(U),X^{3}(U) \label{X_i(U)}%
\end{equation}
gives a measure of this deformation (as $J^{1}(\tau)$ is $5$-dimensional and
all the forms $X^{j}(U)$ vanish on $X$, there is no need to consider the
remaining derivatives).

\begin{definition}
Let $X\in\mathcal{C}$. The \emph{type} of $X$ is the rank of system
(\ref{X_i(U)}).
\end{definition}

The following cases are possible:

\begin{itemize}
\item[1)] Fields of \emph{type 2}: $X^{2}(U)$ depends on $U$ and $X(U)$ (which
is equivalent to $X$ being characteristic for $X^{\perp}=\{U=X(U)=0\}$);

\item[2)] Fields of \emph{type 3}: $U$, $X(U)$, $X^{2}(U)$ are independent but
$X^{3}(U)$ depends on them (which is equivalent to\textbf{ }$X$ being
characteristic for distribution $\{U=X(U)=X^{2}(U)=0\}$);

\item[3)] Fields of \emph{type 4}: $U$, $X(U)$, $X^{2}(U)$, $X^{3}(U)$ are independent.
\end{itemize}

Note that, due to the complete non integrability of the contact distribution,
it can not be $X(U)=\lambda U$, for $X\in\mathcal{C}\backslash\{0\}$ (``type
$1$"). Note also that the above three cases are well defined, i.e. they do not
depend on the choice of $U$ nor on the length of $X$ (in other words, what we
are dealing with are line distributions, rather than vector fields). As one
can realize from the definition, the higher is the type, the more complicated
is the structure of Cartan fields.

\smallskip In the rest of the section we will study the main properties of
different types of Cartan fields, starting from the simplest and the most
basic ones: hamiltonian vector fields.

\subsection{Hamiltonian fields and integrable distributions}

The map%
\begin{equation}
\chi:\mathcal{C}\longrightarrow\Lambda^{1}(J^{1}(\tau
))/<U>\,,\,\,\,X\longmapsto X(U)\,\,\text{mod}<U>\text{,} \label{Chi_map}%
\end{equation}
is a $C^{\infty}(J^{1}(\tau))$-module isomorphism: it associates
with each Cartan field $X$ the restriction of $X(U)$ to
$\mathcal{C}$. Note that, although $\chi$ depends on the choice of
$U$ (by substituting it with a multiple $\overline{U}=\lambda U$
one gets $\overline{\chi}=\lambda\chi$), $Rad~\chi(X)=X^{\bot}$
does not change. By inverting $\chi$, with each
$\sigma\in\Lambda^{1}(J^{1}(\tau))$ one associates a Cartan vector
field
\[
X_{\sigma}\overset{\text{def}}{=}\chi^{-1}([\sigma])\text{,}%
\]
where $[\sigma]$ is the equivalence class of $\sigma$ in $\Lambda^{1}%
(J^{1}(\tau))/<U>$; in other words, $X_{\sigma}\in\mathcal{C}$ is determined
by the relation%
\begin{equation}
X_{\sigma}(U)=X_{\sigma}\lrcorner dU=\sigma+\lambda U \label{X_sigma}%
\end{equation}
for some $\lambda\in C^{\infty}(J^{1}(\tau))$ (in fact, if $U$ is given by
(\ref{eq.U.vera.e.propria}), then $\lambda=-\sigma(\partial_{z}))$.

\begin{proposition}
\label{three dimensions}$X_{\sigma}^{\bot}=\{U=0,\sigma=0\}$. Furthermore,
$X_{\sigma}$ is characteristic for $X_{\sigma}^{\bot}$ if and only if it is of
type $2$.
\end{proposition}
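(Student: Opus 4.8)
The plan is to handle the two assertions separately: the equality of distributions is immediate from the definitions, while the characterization of when $X_\sigma$ is a Cauchy characteristic of $X_\sigma^\bot$ reduces to the equivalence between being of type $2$ and being characteristic for $X^\bot$ already announced right after the definition of type.

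For the first assertion I would invoke the general formula $X^\bot = \{U=0,\, X(U)=0\}$ recorded above and specialize it to $X = X_\sigma$. By the defining relation $X_\sigma(U) = \sigma + \lambda U$, the $1$-forms $X_\sigma(U)$ and $\sigma$ coincide once restricted to the hyperplane field $\{U=0\}$, so $\{U=0,\,X_\sigma(U)=0\} = \{U=0,\,\sigma=0\}$, which is the claim. (In passing one checks that this is genuinely $3$-dimensional: if $\sigma$ were a multiple of $U$ then so would be $X_\sigma(U)$, impossible for a nonzero Cartan field by complete non-integrability; hence $\sigma$ and $U$ are pointwise independent.)

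For the second assertion I would first prove the general fact that, for an arbitrary Cartan field $X$, $X$ is characteristic for $X^\bot$ if and only if $X$ is of type $2$. Recall that a field $X$ lying in a distribution $\mathcal{P}$ cut out by $\{\beta^1 = \dots = \beta^k = 0\}$ is characteristic for $\mathcal{P}$ precisely when $X \lrcorner d\beta^i$ belongs to the submodule generated by $\beta^1,\dots,\beta^k$ for every $i$ (equivalently, $[X,\mathcal{P}]\subseteq\mathcal{P}$). Taking $\mathcal{P} = X^\bot$ with defining forms $U$ and $X(U)$, and using $U(X)=0$ together with $X(U)(X) = dU(X,X) = 0$, Cartan's formula gives $X \lrcorner dU = X(U)$ and $X \lrcorner d\big(X(U)\big) = X^2(U)$. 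Hence the condition coming from $U$ is automatically satisfied, while the one coming from $X(U)$ reads $X^2(U) \in \langle U, X(U)\rangle$ --- exactly the defining property of type $2$. Applying this to $X = X_\sigma$ and using the first assertion (together with $\langle U,\sigma\rangle = \langle U, X_\sigma(U)\rangle$, so the characteristic condition for $X_\sigma^\bot$ may be tested with the generators $U,\sigma$ or with $U, X_\sigma(U)$ indifferently) yields the statement.

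I do not expect a genuine obstacle: the whole argument is a short manipulation of interior products and Lie derivatives. The one point that needs care --- and is really the crux --- is the translation between the two languages in which ``type $2$'' and ``characteristic'' are phrased, i.e. the identities $X(U) = X \lrcorner dU$ and $X^2(U) = X \lrcorner d\big(X(U)\big)$, which hold only because the contractions $U(X)$ and $X(U)(X)$ vanish for a Cartan field.
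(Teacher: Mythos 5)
Your proof is correct and follows essentially the same route as the paper: both reduce the characteristic condition for $X_\sigma^\bot$ to membership of $X_\sigma^2(U)$ in $\langle U, X_\sigma(U)\rangle$ via the defining relation $X_\sigma(U)=\sigma+\lambda U$, the paper doing so by directly computing $X_\sigma(\sigma)=X_\sigma^2(U)-X_\sigma(\lambda)U-\lambda X_\sigma(U)$ while you make explicit the general criterion (Lie derivatives of the defining forms staying in the Pfaffian ideal) that the paper leaves implicit. No gaps.
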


\begin{proof}
It follows from (\ref{X_sigma}) that $\sigma(X_{\sigma})=0$. But then%
\[
X_{\sigma}(\sigma)=X_{\sigma}^{2}(U)-X_{\sigma}(\lambda)U-\lambda X_{\sigma
}(U),
\]
hence, $X_{\sigma}$ is characteristic for $X^{\bot}_{\sigma}$ if and only if
$X_{\sigma}^{2}(U)$ linearly depends on $U$ and $X_{\sigma}(U)$.
\end{proof}

\medskip In the case $\sigma$ is exact, $\sigma=df$, we simply write $X_{f}$
instead of $X_{df}$. Due to the apparent analogy with the case of symplectic
geometry, we give the following

\begin{definition}
\label{Hamiltonian_Field}Let $f\in C^{\infty}(J^{1}(\tau))$, then the vector
field $X_{f}\in\mathcal{C}$ is called the (contact-)hamiltonian vector field
associated with $f$.
\end{definition}

Note that, although $X_{f}$ depends on the particular choice of $U$, its
direction, which by the previous proposition is orthogonal to $\{U=0,df=0\}$,
only depends on $\mathcal{C}$ (and $f\,$, of course). Furthermore, as in the
symplectic case, $f$ is a first integral of the corresponding field:
$X_{f}(f)=df(X_{f})=0$, from which it easily follows that $X_{f}$ is of type
$2$. By the previous proposition, $X_{f}$ is characteristic for distribution
$X_{f}^{\bot}$: in other words, $X_{f}$ coincides with the classical
characteristic vector field of the first order equation $f=0$. Its local
expression in a contact coordinate system $(x,y,z,p,q)$ on $J^{1}(\tau)$ is
\[
X_{f}=\partial_{p}(f)\,\widehat{\partial}_{x}+\partial_{q}(f)\,\widehat
{\partial}_{y}-\widehat{\partial}_{x}(f)\,\partial_{p}-\widehat{\partial}%
_{y}(f)\,\partial_{q}.
\]
In particular:
\[
X_{x}=-\partial_{p},\quad X_{y}=-\partial_{q},\quad X_{z}=-p\partial
_{p}-q\partial_{q},\quad X_{p}=\widehat{\partial}_{x},\quad X_{q}%
=\widehat{\partial}_{y}.
\]

\begin{example}
Let $X\in\mathcal{C}$, and $f$ be a first integral of $X$ then $X_{f}\in
X^{\bot}$:%
\[
dU(X_{f},X)=X_{f}(U)(X)=(df+\lambda U)(X)=0.
\]
Hence, if $f$, $g$, $h$ are three first integrals such that $df$, $dg$, $dh$,
$U$ are independent, then%
\[
X^{\bot}=<X_{f},X_{g},X_{h}>.
\]

\end{example}

\begin{theorem}
\label{th.sostiutivo} Let $f,g\in C^{\infty}(J^{1}(\tau))$. Then the following
properties are equivalent:

\begin{itemize}
\item[1)] the distribution $<X_{f},X_{g}>$ is integrable$;$

\item[2)] $X_{f}$ and $X_{g}$ are orthogonal with respect to $dU$;

\item[3)] $X_{f}(g)=X_{g}(f)=0$;
\end{itemize}

Furthermore, if $f,g$ are functionally independent, then the following two
properties can be added to the above list of equivalences:

\begin{itemize}
\item[4)] there exists a third function $h\in C^{\infty}(J^{1}(\tau))$ such
that $U$ linearly depends on $df,dg,dh$;

\item[5)] there exists a system of contact coordinates $(x,y,z,p,q)$ in which
$x=f$, $y=g$;
\end{itemize}
\end{theorem}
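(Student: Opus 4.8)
The plan is to prove the cycle of implications $1)\Leftrightarrow 2)\Leftrightarrow 3)$ directly from the defining relation \eqref{X_sigma}, and then to add $4)$ and $5)$ under the functional-independence hypothesis. For $2)\Leftrightarrow 3)$, I would start from the identity
\[
dU(X_f,X_g)=X_f(U)(X_g)=(df+\lambda U)(X_g)=df(X_g)=X_g(f),
\]
using $U(X_g)=0$ since $X_g\in\mathcal{C}$; by symmetry (or skew-symmetry of $dU|_{\mathcal{C}}$) this equals $-X_f(g)$, so $2)$ says exactly $X_g(f)=X_f(g)=0$, which is $3)$. For $1)\Leftrightarrow 2)$, I would compute $[X_f,X_g](U)$ using the Leibniz-type rule $X([Y,Z])=\ldots$; more conveniently, note $[X_f,X_g]\in\mathcal{C}$ iff $U([X_f,X_g])=0$, and $U([X_f,X_g])=X_f(U(X_g))-X_g(U(X_f))-dU(X_f,X_g)=-dU(X_f,X_g)$, since $U(X_f)=U(X_g)=0$. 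Hence $<X_f,X_g>$ is closed under bracket (i.e. integrable, being a rank-$2$ distribution, by Frobenius) precisely when $dU(X_f,X_g)=0$, giving $1)\Leftrightarrow 2)$.

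For the second block, assume $f,g$ functionally independent. Given $2)$, the plan is: $X_f^\perp=\{U=0,df=0\}$ and $X_g^\perp=\{U=0,dg=0\}$ by Proposition \ref{three dimensions}; orthogonality $X_g\in X_f^\perp$ forces $df(X_g)=0$ and likewise $dg(X_f)=0$, and by the Example above, $X_f$ and $X_g$ are both first integrals' Hamiltonian fields, so from $X_f(f)=X_f(g)=0$ and $X_g(f)=X_g(g)=0$ one sees that $f,g$ are common first integrals of the integrable rank-$2$ distribution $<X_f,X_g>$. Since that distribution has rank $2$ on a $5$-manifold, its annihilator is rank $3$; I claim $U,df,dg$ all lie in it and are independent (independence of $df,dg$ from functional independence; independence of $U$ from them because $\mathcal{C}$ is completely non-integrable, so $U$ cannot be a combination of $df,dg$ alone without violating $dU\wedge dU\wedge U\neq 0$ — this is the point to argue carefully). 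Completing $\{df,dg\}$ to a coframe of the annihilator by a third closed form $dh$ (locally possible since the distribution is integrable, pick first integrals $f,g,h$) gives $4)$: $U=\alpha\,df+\beta\,dg+\gamma\,dh$, and after absorbing one can normalize. Then $5)\Rightarrow 1)$ is immediate since in coordinates with $x=f,y=g$ we have $X_f=X_x=-\partial_p$, $X_g=X_y=-\partial_q$, which obviously span an integrable distribution; and $4)\Rightarrow 5)$ follows by taking $(f,g,h,\ldots)$ as the start of a contact chart, using that $U$ depending on $df,dg,dh$ lets one rescale to the Darboux normal form $dh - f\,d(\cdot) - g\,d(\cdot)$ — more precisely, writing $U$ up to a factor as $dh - f\,dp' - g\,dq'$ for suitable functions $p',q'$, which is a contact chart with $x=f$, $y=g$.

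The main obstacle I expect is the passage establishing $4)$, specifically verifying that $U$ is independent of $df$ and $dg$ inside the rank-$3$ annihilator and then producing the \emph{closed} third generator $dh$ together with the rescaling that puts $U$ into Darboux form with $x=f$, $y=g$ as two of the coordinates. The independence of $U$ from $df,dg$ is where complete non-integrability \eqref{completa_non_integrablita} must be invoked: if $U\in<df,dg>$ then $dU\in<df\wedge dg>$ up to terms killed appropriately, contradicting $dU\wedge dU\wedge U\neq0$. The existence of a contact chart realizing $x=f,y=g$ is essentially a relative Darboux argument: once we know $U\wedge df\wedge dg\neq 0$ and $U$ lies in the integrable rank-$3$ system $\{df=dg=0\}^{\!*}$ restricted appropriately, standard contact normal-form theory (Darboux with prescribed functions) yields the coordinates; I would cite this rather than reprove it, but flag that it is the technical heart.
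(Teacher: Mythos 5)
Your architecture (prove $1)\Leftrightarrow 2)\Leftrightarrow 3)$ directly from (\ref{X_sigma}), then add $4)$ and $5)$ under functional independence) and your computation for $2)\Leftrightarrow 3)$ match the paper. However, there is a genuine gap in your argument for $2)\Rightarrow 1)$. The identity $U([X_{f},X_{g}])=-dU(X_{f},X_{g})$ shows only that $dU(X_{f},X_{g})=0$ is equivalent to $[X_{f},X_{g}]\in\mathcal{C}$; since $\mathcal{C}$ has rank $4$ while $<X_{f},X_{g}>$ has rank $2$, membership in $\mathcal{C}$ does not give membership in $<X_{f},X_{g}>$, so ``closed under bracket precisely when $dU(X_{f},X_{g})=0$'' is a non sequitur in the direction you need (it is correct only for $1)\Rightarrow 2)$). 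The missing step, which is exactly what the paper supplies, is to show that under $2)$ the bracket is $dU$-orthogonal to both generators: using $X_{f}(U)=df-f_{z}U$ and $U([X_{f},X_{g}])=0$ one gets $dU(X_{f},[X_{f},X_{g}])=df([X_{f},X_{g}])=X_{f}(X_{g}(f))-X_{g}(X_{f}(f))=0$ by $3)$ and $X_{f}(f)=0$, and similarly for $X_{g}$; then, since the $2$-dimensional isotropic distribution $<X_{f},X_{g}>$ equals its own orthogonal complement in $(\mathcal{C},dU|_{\mathcal{C}})$, the bracket lies in it.

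The second block of your plan is essentially the paper's, with two smaller remarks. First, you are right that one must rule out $U\in<df,dg>$, and your complete non-integrability argument is the correct one; but no ``relative Darboux'' machinery is then needed for $4)\Rightarrow 5)$: once $\lambda U=dh-a\,df-b\,dg$, the functions $x=f$, $y=g$, $z=h$, $p=a$, $q=b$ already form a contact chart, which is the paper's one-line conclusion. Second, your proposed normalization $dh-f\,dp'-g\,dq'$ places $f$ and $g$ in the roles of the momenta $p,q$ rather than of $x,y$ as statement $5)$ requires, so as written it would still need a Legendre transformation; the direct normalization above avoids this.
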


\begin{proof}
$1)$ implies $2)$. It follows from%
\[
dU(X_{f},X_{g})=-U([X_{f},X_{g}])
\]
and from the fact that $[X_{f},X_{g}]$ depends on $X_{f}$ and $X_{g}$. Also,
$2)$ implies $1)$. It follows from%
\[
dU(X_{f},[X_{f},X_{g}])=df([X_{f},X_{g}])=X_{f}\left(  X_{g}(f)\right)
-X_{g}\left(  X_{f}(f)\right)  =0
\]
and the analogous relation for $X_{g}$, keeping in mind that $<X_{f}%
,X_{g}>^{\bot}=<X_{f},X_{g}>$.

\smallskip The equivalence of $2)$ and $3)$ is an immediate consequence of
(\ref{X_sigma}) applied to the cases $\sigma=df$ and $\sigma=dg$, respectively.

Let us now assume the functional independence of $f$ and $g$. If $1)$ holds,
then by $2)$ $df$ and $dg$ vanish on $<X_{f},X_{g}>$, so that there exists a
third function $h$, independent from $f$ and $g$, such that $<X_{f}%
,X_{g}>=\{df=0,dg=0,dh=0\}$. As $U$ vanishes on $X_{f}$, $X_{g}$ it linearly
depends on $df,dg,dh$.

\smallskip Let now $4)$ hold, then:%
\begin{equation}
\lambda U=dh-adf-bdg\text{,} \label{Tre_Tenori}%
\end{equation}
for some functions $\lambda,a,b\in C^{\infty}(J^{1}(\tau))$ (note that, as $U$
is completely non integrable, in (\ref{Tre_Tenori}) all the three
differentials must appear). But, then%
\[
x=f,\,\,y=g,\,\,h=z,\,\,p=a,\,\,q=b
\]
are contact coordinates on $J^{1}(\tau)$, which proves $5)$. Finally, let $5)$
hold. Then%
\[
X_{f}(g)=X_{x}(y)=-\partial_{p}(y)=0\text{,}%
\]
which implies $3)$.
\end{proof}

\smallskip We note that the previous theorem is a special case of a more
general result, essentially due to Jacobi (the statement and proof can be
found in \cite{Muñoz Diaz}).

%\begin{proof}
%\
%\begin{itemize}
%\item a) is equivalent to b). In fact b) follows from a) as
%\[
%dU(X_{f},X_{g})=-U([X_{f},X_{g}])
%\]
%and from the fact that $[X_{f},X_{g}]$ depends on $X_{f}$ and
%$X_{g}$. Also, a) follows from b) $\Longrightarrow$ a) as
%\[
%dU(X_{f},[X_{f},X_{g}])=df([X_{f},X_{g}])=X_{f}\left(
%X_{g}(f)\right) -X_{g}\left(  X_{f}(f)\right)  =0
%\]
%and the analogous relation for $X_{g}$, keeping in mind that
%$<X_{f} ,X_{g}>^{\bot}=<X_{f},X_{g}>$.
%\item The equivalence of b) and c) is an immediate consequence of
%(\ref{X_sigma}) applied to the cases $\sigma=df$ and $\sigma=dg$,
%respectively.
%Let us now assume the functional independence of $f$ and $g$. If
%a) holds, then by b) $df$ and $dg$ vanish on $<X_{f},X_{g}>$, so
%that there exists a
%third function $h$, independent from $f$ and $g$, such that $<X_{f}%
%,X_{g}>=\{df=0,dg=0,dh=0\}$. As $U$ vanishes on $X_{f}$, $X_{g}$
%it linearly
%depends on $df,dg,dh$.
%\item d) is equivalent to e). First, let d) hold, then:
%\begin{equation}
%\lambda U=dh-adf-bdg\text{,}\label{Tre_Tenori}%
%\end{equation}
%for some functions $\lambda,a,b\in C^{\infty}(J^{1}(\tau))$ (note
%that, as $U$ is completely non integrable, in (\ref{Tre_Tenori})
%all the three
%differentials must appear). But, then%
%\[
%x=f,y=g,h=z,p=a,q=b
%\]
%are contact coordinates on $J^{1}(\tau)$, which proves e).
%Finally, let e)
%hold. Then%
%\[
%X_{f}(g)=X_{x}(y)=-\partial_{p}(y)=0\text{,}%
%\]
%which implies c).
%\end{itemize}
%\end{proof}

\begin{definition}
Two functions $f,g\in C^{\infty}(J^{1}(\tau))$ are \emph{in involution }when
they satisfy any of the equivalent properties $1)$, $2)$, $3)$ of the previous theorem.
\end{definition}

\begin{theorem}
[structure of integrable distributions]\label{Caratterizzazione_Integrabilita}
Let $\mathcal{D}$ be a 2-dimensional distribution in $\mathcal{C}$. Then
$\mathcal{D}$ is integrable if and only if it is spanned by two hamiltonian
fields $X_{f}$ e $X_{g}$, with $f$ and $g$ independent and in involution.
\end{theorem}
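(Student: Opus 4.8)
The plan is to establish the two implications separately, using Theorem \ref{th.sostiutivo} as the main engine. For the ``if'' direction, suppose $\mathcal{D}=\langle X_f,X_g\rangle$ with $f,g$ independent and in involution. Then property $2)$ of the definition of involution is exactly condition $2)$ of Theorem \ref{th.sostiutivo}, so by the equivalence $1)\Leftrightarrow 2)$ there, the distribution $\langle X_f,X_g\rangle$ is integrable. This half is essentially immediate once the earlier results are in place; there is nothing to do beyond quoting Theorem \ref{th.sostiutivo}. I would also note in passing that $f,g$ independent guarantees $X_f,X_g$ are pointwise independent (if they were proportional, their common radical would force $df,dg$ to be dependent modulo $U$, and then, because $f$ and $g$ are both first integrals of the same line field, one checks $df\wedge dg\wedge U=0$ contradicts independence), so $\mathcal{D}$ really is $2$-dimensional.

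The substantive direction is ``only if''. Suppose $\mathcal{D}\subset\mathcal{C}$ is integrable and $2$-dimensional. Since $\mathcal{D}$ is integrable, locally there exist three functionally independent first integrals; more precisely, $\mathcal{D}$ is the kernel of three independent closed $1$-forms, which we may take to be $df$, $dg$, $dh$ for suitable $f,g,h\in C^\infty(J^1(\tau))$ with $df\wedge dg\wedge dh\neq 0$. The key point is to arrange that among these we can pick two, say $f$ and $g$, whose hamiltonian fields $X_f, X_g$ span $\mathcal{D}$. To see this, observe that every vector field tangent to $\mathcal{D}$ annihilates $df$, $dg$, $dh$; in particular, since $\mathcal{D}\subset\mathcal{C}$, both $U$ and these three differentials vanish on $\mathcal{D}$, and $\mathcal{D}$ being $2$-dimensional inside the $4$-dimensional $\mathcal{C}$ forces $\mathcal{D} = \{U = df = dg = dh = 0\}$ after possibly discarding one redundant differential. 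The cleanest route is then: by the Example preceding Theorem \ref{th.sostiutivo}, if $f$ is a first integral of some $X\in\mathcal{D}$ then $X_f\in X^\perp$; applying this with $X$ ranging over a basis of $\mathcal{D}$, and using that $\mathcal{D}$ is lagrangian (i.e. $\mathcal{D}=\mathcal{D}^\perp$, which holds since $\mathcal{D}\subset\mathcal{C}$ is $2$-dimensional and $dU|_{\mathcal{D}}=0$ by integrability), we get $X_f,X_g,X_h\in\mathcal{D}^\perp=\mathcal{D}$. Since $\mathcal{D}$ is $2$-dimensional, any two of $X_f,X_g,X_h$ that are independent will span it; and functional independence of, say, $f$ and $g$ (which we may assume after relabelling, since $df,dg,dh$ are independent) yields via property $5)\Rightarrow$ the pointwise independence of $X_f=X_x$ and $X_g=X_y$ in the adapted coordinates. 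Finally, $f$ and $g$ are in involution: $X_f$ and $X_g$ both lie in $\mathcal{D}=\mathcal{D}^\perp$, so $dU(X_f,X_g)=0$, which is condition $2)$ of Theorem \ref{th.sostiutivo}.

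The step I expect to be the main obstacle is the bookkeeping around reducing the three first integrals to the \emph{two} that do the job, and checking that the chosen pair is functionally independent \emph{and} has pointwise-independent hamiltonian fields spanning $\mathcal{D}$ — i.e. making sure the ``$2$-dimensional'' hypothesis is used correctly and that no degeneracy (like $X_f$ and $X_g$ becoming proportional somewhere) sneaks in. The resolution is to lean on the lagrangian property $\mathcal{D}=\mathcal{D}^\perp$ together with the Example and the orthogonality characterization in Theorem \ref{th.sostiutivo}: once $X_f,X_g\in\mathcal{D}$ and $f,g$ are functionally independent, Theorem \ref{th.sostiutivo} item $5)$ gives contact coordinates with $x=f$, $y=g$, in which $X_f=-\partial_p$ and $X_g=-\partial_q$ are manifestly independent and span a $2$-plane inside $\mathcal{C}$; since this $2$-plane is contained in $\mathcal{D}$ and $\dim\mathcal{D}=2$, it equals $\mathcal{D}$.
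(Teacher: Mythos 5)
Your proof is correct, and for the converse it takes a route that is genuinely, if mildly, different from the paper's. The paper, after writing $\mathcal{D}=\{df=dg=dh=0\}$ and observing that $U$ must be a linear combination of $df,dg,dh$, simply re-runs the $4)\Rightarrow 5)\Rightarrow 3)$ chain from the proof of Theorem \ref{th.sostiutivo} to obtain the involution, and dismisses the containment $X_{f},X_{g}\in\mathcal{D}$ as obvious. You instead make that containment the centerpiece: integrability forces $dU|_{\mathcal{D}}=0$, hence $\mathcal{D}=\mathcal{D}^{\bot}$, and the Example preceding Theorem \ref{th.sostiutivo} then places $X_{f},X_{g},X_{h}$ in $\mathcal{D}^{\bot}=\mathcal{D}$; involution drops out of $dU(X_{f},X_{g})=0$ via the equivalence $2)\Leftrightarrow 3)$. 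This is cleaner on exactly the point the paper glosses over, at the cost of invoking item $5)$ for the pointwise independence of $X_{f}$ and $X_{g}$ --- note that item $5)$ presupposes the involution, so you must establish $dU(X_{f},X_{g})=0$ \emph{before} appealing to it; all the ingredients are present in your text, only the order of the last two steps needs to be swapped.

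One side remark in your ``if'' direction is actually false: functional independence of $f$ and $g$ does \emph{not} guarantee pointwise independence of $X_{f}$ and $X_{g}$. Take $f=z$, $g=x$: these are functionally independent and in involution, yet $X_{z}=-p\partial_{p}-q\partial_{q}$ and $X_{x}=-\partial_{p}$ are proportional wherever $q=0$. (Pointwise independence does follow once one has a contact chart with $x=f$, $y=g$, i.e.\ where item $5)$ holds in full.) This does not damage your argument: in the ``if'' direction the hypothesis already asserts that $X_{f},X_{g}$ span a $2$-dimensional distribution, and in the ``only if'' direction you derive independence from item $5)$ rather than from functional independence alone. But the parenthetical justification as stated should be dropped.
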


\begin{proof}
One of the two implications has already been proved in the previous theorem.
As to the converse implication, let $\mathcal{D}\subset\mathcal{C}$ be
$2$-dimensional and integrable. Then $\mathcal{D}=\{df=dg=dh=0\}$ for some
independent functions $f$, $g$, $h$. But, as $U$ vanishes on $\mathcal{D}$, it
linearly depends on $df$, $dg$, $dh$, i.e. is of the form (\ref{Tre_Tenori}),
so that, by the same argument used there, $\ f$ e $g$ are in involution (and ,
obviously, $\mathcal{D}$ contains $X_{f}$ and $X_{g}$).
\end{proof}

\medskip As a consequence of Theorems \ref{Caratterizzazione_Integrabilita}
and $5)$ of Theorem \ref{th.sostiutivo} one has that every integrable,
2-dimensional distribution in $\mathcal{C}$ can be reduced to the form%
\[
\mathcal{D}=<\partial_{p},\partial_{q}>=<X_{x},X_{y}>
\]
in a suitable contact chart; a partial or total Legendre map gives the
alternative representations%
\[
\mathcal{D}=<\widehat{\partial}_{x},\widehat{\partial}_{y}>\,\,\,\text{or}%
\,\,\,\,\mathcal{D}=<\widehat{\partial}_{x},\partial_{q}>\,\,\,\text{or}%
\,\,\,\,\mathcal{D}=<\widehat{\partial}_{y},\partial_{p}>.
\]
The following proposition, together with Proposition \ref{three dimensions},
completes the discussion of integrability of subdistributions in $\mathcal{C}$.

\begin{proposition}
\label{3_mai_integrabile}Let $\mathcal{P}\subset\mathcal{C}$ be a
$3$-dimensional distribution. Then its derived distribution $\mathcal{P}%
^{\prime}$ is not contained in $\mathcal{C}$; in particular, $\mathcal{P}$ is
not integrable.
\end{proposition}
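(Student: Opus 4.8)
The plan is to argue by contradiction using the symplectic structure on $\mathcal{C}$ induced by $dU$. Suppose $\mathcal{P}\subset\mathcal{C}$ is a $3$-dimensional distribution with $\mathcal{P}'\subset\mathcal{C}$. Since $\mathcal{P}$ is $3$-dimensional in the $4$-dimensional symplectic bundle $(\mathcal{C},dU|_{\mathcal{C}})$, its $dU$-orthogonal $\mathcal{P}^{\bot}$ is a $1$-dimensional subdistribution, and because $3+1>4$ we have $\mathcal{P}^{\bot}\subset\mathcal{P}$; pick a generator $X$ of $\mathcal{P}^{\bot}$, so $X\in\mathcal{P}$ and $dU(X,Y)=0$ for all $Y\in\mathcal{P}$. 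As observed earlier in the excerpt, for Cartan fields $dU(X,Y)=X(U)(Y)=-U([X,Y])$ whenever $Y\in\mathcal{C}$; hence $U([X,Y])=0$ for every $Y\in\mathcal{P}$.

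First I would use this to pin down $\mathcal{P}$ as the $dU$-orthogonal complement of $X$ inside $\mathcal{C}$, i.e. $\mathcal{P}=X^{\bot}=\{U=0,\,X(U)=0\}$: indeed both are $3$-dimensional and $\mathcal{P}\subset X^{\bot}$ since $X$ is $dU$-orthogonal to all of $\mathcal{P}$. So $\mathcal{P}=X^{\bot}$ for a single Cartan field $X$. Now complete $X$ to a frame $X,Y,Z$ of $\mathcal{P}$. The hypothesis $\mathcal{P}'\subset\mathcal{C}$ means in particular that $[Y,Z]\in\mathcal{C}$, i.e. $U([Y,Z])=0$; together with $U([X,Y])=U([X,Z])=0$ (from the previous paragraph) this says $dU(W_1,W_2)=0$ for every pair $W_1,W_2\in\mathcal{P}$, i.e. $dU|_{\mathcal{P}}\equiv 0$. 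But then $\mathcal{P}$ would be a $3$-dimensional subspace on which $dU$ vanishes identically, i.e. a $3$-dimensional isotropic subspace of the $4$-dimensional symplectic space $\mathcal{C}_\theta$ — impossible, since isotropic subspaces of a symplectic vector space have dimension at most half the dimension. This is the contradiction, and it simultaneously shows $\mathcal{P}$ cannot be integrable (integrability would force $\mathcal{P}'=\mathcal{P}\subset\mathcal{C}$).

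The one point that needs a little care — and is really the only obstacle — is the passage from ``$\mathcal{P}'\subset\mathcal{C}$'' to ``$dU|_{\mathcal{P}}=0$''. The cheap observation $dU(W_1,W_2)=-U([W_1,W_2])$ is valid for vector fields tangent to $\mathcal{C}$ because $U|_{\mathcal{C}}=0$, so $dU(W_1,W_2)=W_1(U(W_2))-W_2(U(W_1))-U([W_1,W_2])=-U([W_1,W_2])$; and $U([W_1,W_2])=0$ for all $W_1,W_2\in\mathcal{P}$ is exactly what $\mathcal{P}'=\mathcal{P}+[\mathcal{P},\mathcal{P}]\subset\mathcal{C}$ (equivalently $U([\mathcal{P},\mathcal{P}])=0$) asserts. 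So in fact one does not even need the orthogonality argument of the first paragraph: the hypothesis $\mathcal{P}'\subset\mathcal{C}$ directly gives $dU|_{\mathcal{P}}=0$, and then the dimension count against $dU\wedge dU\wedge U\neq 0$ (condition \eqref{completa_non_integrablita}, which forbids $\geq 3$-dimensional integral submanifolds of $\mathcal{C}$, equivalently $\geq 3$-dimensional $dU$-isotropic subspaces of $\mathcal{C}$) finishes it. The final clause ``in particular $\mathcal{P}$ is not integrable'' is then immediate: if $\mathcal{P}$ were integrable then $\mathcal{P}'=\mathcal{P}\subset\mathcal{C}$, contradicting what we just proved.
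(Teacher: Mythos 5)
Your proposal is correct and, as you yourself note in the final paragraph, reduces to exactly the paper's argument: the hypothesis $\mathcal{P}'\subset\mathcal{C}$ gives $dU(W_1,W_2)=-U([W_1,W_2])=0$ for all $W_1,W_2\in\mathcal{P}$, so $dU|_{\mathcal{C}}$ would vanish on a $3$-dimensional subspace of the $4$-dimensional symplectic bundle $(\mathcal{C},dU|_{\mathcal{C}})$, which is impossible. The first paragraph's detour through $\mathcal{P}^{\bot}$ is sound but, as you observe, unnecessary.
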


\begin{proof}
Assume, by contradiction, that $\mathcal{P}^{\prime}\subset\mathcal{C}$. Then,
for any couple of fields $X_{1}$, $X_{2}\in\mathcal{P}$ it would hold
$dU(X_{1},X_{2})=-U([X_{1},X_{2}])=0$, i.e. $(dU)|_{\mathcal{C}}$ would
identically vanish on $\mathcal{P}$.
\end{proof}

%\medskip
%\begin{remark}
%In fact it can be proved the following more general result: a
%$3$-dimensional distribution $\mathcal{P}\subset\mathcal{C}$
%admits a non trivial characteristic subdistribution if and only if
%$X$ is of type $2$, $X$ being a generator of
%$\mathcal{P}^{\perp}$; in such a case the characteristic
%distribution of $\mathcal{P}$ coincides with
%$\mathcal{P}^{\perp}$.
%\end{remark}

\medskip Below we will need the following general lemma on derived
distributions. The proof is straightforward.

\begin{lemma}
\label{Lemma_Calcolo_Derivato} Let $\mathcal{P}$ be a $k$-dimensional
distribution on a smooth manifold $M^{n}$ and let $I_{\mathcal{P}}$ be the
corresponding Pfaffian system. Then the Pfaffian system associated with the
derived distribution $\mathcal{P}^{\prime}$ is:%
\[
I_{\mathcal{P}}^{\prime}=\{\omega\in I_{\mathcal{P}}\text{ s.t. }X(\omega)\in
I_{\mathcal{P}} \,\, \forall X\in\mathcal{P}\}.
\]

\end{lemma}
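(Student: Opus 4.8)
The plan is to reduce the whole statement to the intrinsic formula for the exterior derivative of a $1$-form together with Cartan's identity for the Lie derivative. Write $I_{\mathcal{P}}$ for the module of $1$-forms annihilating $\mathcal{P}$, and $I_{\mathcal{P}}'$ for the module of $1$-forms annihilating the derived distribution $\mathcal{P}' = \mathcal{P} + [\mathcal{P},\mathcal{P}]$; since everything is local we may restrict to an open set on which $\mathcal{P}'$ has constant rank, so that $I_{\mathcal{P}}'$ is a genuine Pfaffian system. The goal is to show $I_{\mathcal{P}}' = \{\omega\in I_{\mathcal{P}}\ \text{s.t.}\ X(\omega)\in I_{\mathcal{P}}\ \forall X\in\mathcal{P}\}$, and the key identity throughout is $d\omega(X,Y)=X(\omega(Y))-Y(\omega(X))-\omega([X,Y])$ for $X,Y$ vector fields.

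First I would reformulate membership in $I_{\mathcal{P}}'$. Since $\mathcal{P}\subset\mathcal{P}'$, any $\omega\in I_{\mathcal{P}}'$ lies in $I_{\mathcal{P}}$; and an $\omega\in I_{\mathcal{P}}$ annihilates $\mathcal{P}'$ precisely when it also kills every bracket, i.e. $\omega([X,Y])=0$ for all $X,Y\in\mathcal{P}$. For such $X,Y$ one has $\omega(X)=\omega(Y)=0$, so the displayed identity collapses to $d\omega(X,Y)=-\omega([X,Y])$. Hence $I_{\mathcal{P}}' = \{\omega\in I_{\mathcal{P}}:\ d\omega(X,Y)=0\ \ \forall X,Y\in\mathcal{P}\}$, i.e. $\omega\in I_{\mathcal{P}}$ whose exterior derivative vanishes when restricted to $\mathcal{P}$.

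Next I would show that the condition $X(\omega)\in I_{\mathcal{P}}$ for all $X\in\mathcal{P}$ is the same condition. By Cartan's formula $X(\omega)=X\lrcorner d\omega + d(X\lrcorner\omega)$; for $\omega\in I_{\mathcal{P}}$ and $X\in\mathcal{P}$ the function $X\lrcorner\omega=\omega(X)$ vanishes identically, so $d(X\lrcorner\omega)=0$ and $X(\omega)=X\lrcorner d\omega$. Therefore $X(\omega)\in I_{\mathcal{P}}$ if and only if $(X\lrcorner d\omega)(Y)=d\omega(X,Y)=0$ for every $Y\in\mathcal{P}$; imposing this for all $X\in\mathcal{P}$ gives exactly $d\omega|_{\mathcal{P}}=0$. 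Combined with the previous paragraph this yields equality of the two modules, proving the lemma.

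There is essentially no obstacle here — this is why the authors call the proof straightforward — and the only point worth a word of care is the implicit assumption that $\mathcal{P}'$ is a bona fide distribution: a priori its rank may jump. Since all constructions in the paper are local, one simply works on a neighbourhood where $\dim\mathcal{P}'$ is constant, and there the computation above applies verbatim.
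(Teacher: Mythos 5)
Your proof is correct and complete: the reduction of membership in $I_{\mathcal{P}}'$ to the vanishing of $d\omega$ on $\mathcal{P}$ via the intrinsic formula for $d$, followed by Cartan's identity $X(\omega)=X\lrcorner d\omega+d(\omega(X))$ with $\omega(X)\equiv 0$, is exactly the standard argument. The paper itself omits the proof entirely (``the proof is straightforward''), so you have simply supplied the computation the authors had in mind; your remark about working where $\mathcal{P}'$ has constant rank is consistent with the paper's blanket locality convention.
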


The next proposition characterizes hamiltonian fields by integrability
properties of their orthogonal complements.

\begin{proposition}
\label{caso_4_4}Let $X\in\mathcal{C}$. Then $X$ is a multiple of a hamiltonian
field $X_{f}$ if and only if $(X^{\perp})^{\prime}$ is $4$-dimensional and integrable.
\end{proposition}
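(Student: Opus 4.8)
The plan is to prove both implications by translating the statement into the language of Pfaffian systems and using Lemma \ref{Lemma_Calcolo_Derivato}. Write $\mathcal{P}=X^{\perp}=\{U=0,\,X(U)=0\}$, a $3$-dimensional distribution containing $X$, and let $I_{\mathcal{P}}=\langle U,\,X(U)\rangle$ be its Pfaffian system. By Lemma \ref{Lemma_Calcolo_Derivato}, the Pfaffian system of $\mathcal{P}'$ consists of those $1$-forms $\omega\in I_{\mathcal{P}}$ with $Y(\omega)\in I_{\mathcal{P}}$ for all $Y\in\mathcal{P}$. The key point is that, since $\mathcal{P}$ is spanned by $X$ together with a complement, and $X(U)=0$ already holds on $\mathcal{P}$ for $U$, the only possible obstruction to a form staying in $I_{\mathcal{P}}$ after Lie differentiation comes from $X$ acting on $X(U)$, i.e. from $X^{2}(U)$. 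So I expect to show that $\dim\mathcal{P}'=4$ (equivalently, $I_{\mathcal{P}}'$ is $1$-dimensional) precisely when $X$ is of type $2$, in which case $I_{\mathcal{P}}'=\langle\sigma\rangle$ for a suitable combination $\sigma$ of $U$ and $X(U)$.

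First I would establish the forward implication. Suppose $X=\mu X_{f}$ for some $f\in C^{\infty}(J^{1}(\tau))$ and nonzero $\mu$. By Proposition \ref{three dimensions}, $X_{f}^{\perp}=\{U=0,\,df=0\}$, and since scaling $X$ does not change $X^{\perp}$ we have $\mathcal{P}=X^{\perp}=\{U=0,\,df=0\}$, so $I_{\mathcal{P}}=\langle U,df\rangle$. Now apply Lemma \ref{Lemma_Calcolo_Derivato}: a generic element of $I_{\mathcal{P}}$ is $\alpha U+\beta\,df$, and for $Y\in\mathcal{P}$ we compute $Y(\alpha U+\beta\,df)=Y(\alpha)U+\alpha Y(U)+Y(\beta)df+\beta\,d(Y(f))$. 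Since $df$ vanishes on $\mathcal{P}$, $Y(f)=0$, so $d(Y(f))$ need not lie in $I_{\mathcal{P}}$ unless $\beta=0$; on the other hand $U$ vanishes on $\mathcal{P}$ so $Y(U)=Y\lrcorner\,dU$, which lies in $I_{\mathcal{P}}$ exactly when $Y\lrcorner\,dU\in\langle U,df\rangle$, and this holds for all $Y\in\mathcal{P}=\{U=0,df=0\}=X_{f}^{\perp}$ because $X_{f}^{\perp}$ is the $dU$-orthogonal of $X_{f}$ and hence $dU$ pairs $X_{f}^{\perp}$ into the span of $X_{f}(U)=df+\lambda U$. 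Thus $I_{\mathcal{P}}'=\langle U\rangle$ — wait, more carefully $I_{\mathcal{P}}'$ is spanned by the forms $\alpha U$ surviving, which after checking is one-dimensional — giving $\dim\mathcal{P}'=4$. Integrability of $\mathcal{P}'$ then follows because $\mathcal{P}'$ is the $4$-dimensional distribution $\{U=0\}\cap\{\text{something}\}$; concretely, since $X_{f}$ is of type $2$ it is characteristic for $X_{f}^{\perp}$, and the flag $\{U=df=0\}\subset\{U=0\}$ together with the fact that $X_f$ preserves $df$ mod $U$ forces $(X_f^\perp)'=\{df=0\}$, which is integrable (it is the Pfaffian system generated by the single closed form $df$... up to the subtlety that $U\in\langle df\rangle$ fails, so in fact $(X_f^\perp)'=\{df-cU=0\}$ for the appropriate $c$, still a single $1$-form whose exterior derivative must be checked to be in the ideal). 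This last integrability check is the point requiring genuine care.

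For the converse, suppose $\mathcal{P}'=(X^{\perp})'$ is $4$-dimensional and integrable. Four-dimensionality means $I_{\mathcal{P}}'$ is a single $1$-form $\sigma$, and by Lemma \ref{Lemma_Calcolo_Derivato} this $\sigma\in\langle U,X(U)\rangle$ satisfies $X(\sigma)\in\langle U,X(U)\rangle$ (it suffices to test on $X$, since on the complement of $X$ inside $\mathcal{P}$ the condition defining $I_{\mathcal{P}}'$ is what cuts $\mathcal{P}'$ down from $\mathcal{P}$, already reflected in $\dim=4$). Writing $\sigma=aU+bX(U)$ and expanding $X(\sigma)=X(a)U+aX(U)+X(b)X(U)+bX^{2}(U)$, membership in $\langle U,X(U)\rangle$ forces $bX^{2}(U)\in\langle U,X(U)\rangle$; since $\sigma\neq 0$ and $\sigma\notin\langle U\rangle$ (as $U$ alone cannot be $I_{\mathcal{P}}'$ — this needs the argument that $\mathcal{P}$ is not integrable, which is Proposition \ref{3_mai_integrabile}), we get $b\neq 0$, hence $X^{2}(U)\in\langle U,X(U)\rangle$, i.e. $X$ is of type $2$. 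Now integrability of $\mathcal{P}'$ enters: $\mathcal{P}'=\{\sigma=0\}$... no, $\mathcal{P}'$ is $4$-dimensional so it is $\{\sigma=0\}$ with $\sigma$ a single $1$-form, and integrability gives $d\sigma\wedge\sigma=0$, so $\sigma$ is (locally, up to a factor) $dg$ for some function $g$. Then $\mathcal{P}=X^\perp\subset\mathcal{P}'=\{dg=0\}$, and I claim $X^\perp=\{U=0,dg=0\}$: indeed $\sigma=aU+bX(U)\in I_{\mathcal{P}}$ so $\{dg=0\}\supset\mathcal{P}$ means $dg\in I_{\mathcal{P}}=\langle U,X(U)\rangle$, and since $X(U)$ and $U$ span $I_\mathcal{P}$ with $b\neq 0$ we can solve $X(U)=(dg-aU)/b$, so $I_{\mathcal{P}}=\langle U,dg\rangle$. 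By Proposition \ref{three dimensions}, $X^\perp=\{U=0,dg=0\}=X_g^\perp$, so $X$ and $X_g$ have the same orthogonal complement, hence are parallel (both span the $dU$-radical of the $3$-dimensional $\{U=0,dg=0\}$ restricted to $\mathcal{C}$... more precisely $X\in(X^\perp)^\perp$ and likewise $X_g$, and $(X^\perp)^\perp$ is the line $\langle X\rangle$). Therefore $X$ is a multiple of $X_g$, completing the proof.

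The main obstacle I anticipate is handling the integrability condition cleanly: going from "$\mathcal{P}'$ is $4$-dimensional and integrable" to "$\sigma = dg$ up to a factor" is the standard Frobenius fact for a corank-$1$ distribution, but I must be careful that the surviving form $\sigma$ in $I_{\mathcal{P}}'$ genuinely has $b\neq 0$ (ruling out $\sigma\in\langle U\rangle$, where Proposition \ref{3_mai_integrabile} is essential since $\{U=0\}\cap$nothing would make $\mathcal{P}$ itself relate to an integrable object — actually the cleaner argument is: if $I_{\mathcal{P}}'=\langle U\rangle$ then $\mathcal{P}'=\{U=0\}=\mathcal{C}$, contradicting $(dU)|_{\mathcal{C}}\neq 0$ via Proposition \ref{3_mai_integrabile}'s proof), and that in the forward direction the dimension count $\dim\mathcal{P}'=4$ is exactly right rather than $3$ or $5$. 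I would double-check the forward dimension count by the type-$2$ characterization directly: $X_f$ type $2$ $\iff$ $X_f$ characteristic for $X_f^\perp$ $\iff$ $X_f^\perp$ has a well-defined $1$-dimensional characteristic system, which forces $(X_f^\perp)'$ to have corank exactly $1$ in $\{U=0\}$, i.e. dimension $4$.
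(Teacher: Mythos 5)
Your converse direction is sound and essentially reproduces the paper's argument: four-dimensionality plus integrability give $(X^{\perp})'=\{dg=0\}$ with $dg\in\langle U,X(U)\rangle$, whence $X^{\perp}=\{U=0,dg=0\}=X_{g}^{\perp}$ and $X\parallel X_{g}$; your appeal to Proposition \ref{3_mai_integrabile} to rule out $\sigma\in\langle U\rangle$ is exactly the right safeguard.

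The forward direction, however, contains a genuine error: your computation of the derived system of $I_{\mathcal{P}}=\langle U,df\rangle$ is backwards on both generators. First, for $Y\in\mathcal{P}=\{U=0,df=0\}$ one has $Y(f)\equiv 0$ as a function, hence $Y(df)=d(Y(f))=0\in I_{\mathcal{P}}$; so $df$ \emph{does} survive into $I_{\mathcal{P}}'$, contrary to your claim that ``$d(Y(f))$ need not lie in $I_{\mathcal{P}}$ unless $\beta=0$''. Second, $Y(U)=Y\lrcorner dU$ lies in $\langle U,df\rangle$ if and only if $dU(Y,Z)=0$ for all $Z\in X_{f}^{\perp}$, i.e.\ if and only if $Y$ belongs to the radical of $dU|_{X_{f}^{\perp}}$, which is the line $\langle X_{f}\rangle$ --- not all of $X_{f}^{\perp}$ as you assert: $X_{f}^{\perp}$ being the $dU$-orthogonal of $X_{f}$ only controls the pairing of its elements with $X_{f}$, not with one another. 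Taken literally your computation gives $I_{\mathcal{P}}'=\langle U\rangle$, i.e.\ $(X_{f}^{\perp})'=\mathcal{C}$, which is not integrable and is excluded by Proposition \ref{3_mai_integrabile}. The correct outcome is $I_{\mathcal{P}}'=\langle df\rangle$, i.e.\ $(X_{f}^{\perp})'=\{df=0\}$, after which the integrability you flag as ``requiring genuine care'' is immediate because $df$ is closed; there is no residual form $df-cU$ to worry about, since the derived system is a subsystem of $\langle U,df\rangle$ and $U$ has just been shown not to belong to it. This corrected computation is precisely the paper's proof.
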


\begin{proof}
Assume $X=X_{f}$ (or a multiple of it), then%
\[
X^{\perp}=X_{f}^{\perp}=<X_{f},X_{g},X_{h}>
\]
with $g$ e $h$ being independent first integrals of $X$ obviously in
involution with $f$. On the other hand
\[
X_{f}^{\perp}=\{U=X_{f}(U)=0\}=\{U=df=0\}.
\]
Furthermore%
\[
X_{f}(df)=X_{g}(df)=X_{h}(df)=0
\]
i.e., by the previous lemma, $df$ belongs to the derived system of $<U,df>$.
Hence,%
\[
(X_{f}^{\perp})^{\prime}=\{df=0\}
\]
which is $4$-dimensional and integrable.

\smallskip Viceversa, let ($X^{\perp})^{\prime}$ be $4$-dimensional and
integrable, then there exists a function $f$ such that ($X^{\perp})^{\prime
}=\{df=0\}$; therefore
\[
X^{\perp}=(X^{\perp})^{\prime}\cap\mathcal{C}=\{U=df=0\}=\{U=X_{f}%
(U)=0\}=X_{f}^{\perp}%
\]
which entails the parallelism between $X$ and $X_{f}$.
\end{proof}

\subsection{Cartan fields of type 2}

\label{subsec_Type_Cartan_Field}

%The following proposition follows immediately from
%(\ref{completa_non_integrablita}).

The following result generalizes Proposition \ref{caso_4_4} and gives a
characterization of type $2$ Cartan fields.

\begin{proposition}
\label{Ortogonale_Tipo_2} Let $X\in\mathcal{C}$. Then $X$ is of type $2$ if
and only if the derived distribution $(X^{\bot})^{\prime}$ has dimension $4$.
\end{proposition}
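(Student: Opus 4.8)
The statement to prove is Proposition~\ref{Ortogonale_Tipo_2}: for $X\in\mathcal{C}$, $X$ is of type $2$ if and only if $\dim(X^{\bot})^{\prime}=4$. The plan is to translate both conditions into statements about the Pfaffian system $I=\langle U, X(U)\rangle$ that cuts out $X^{\bot}$, using Lemma~\ref{Lemma_Calcolo_Derivato}. Recall that $X^{\bot}=\{U=0,\,X(U)=0\}$ is $3$-dimensional, so its derived distribution has dimension $3$, $4$, or $5$; the claim is that dimension exactly $4$ is equivalent to type $2$. By Proposition~\ref{3_mai_integrabile} applied to the $3$-dimensional $\mathcal{P}=X^{\bot}$, integrability (dimension $3$) is impossible, so the only alternative to dimension $4$ is dimension $5$.

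First I would compute $(X^{\bot})^{\prime}$ via the lemma: its Pfaffian system is $I'=\{\omega\in I : Y(\omega)\in I \ \forall Y\in X^{\bot}\}$. Since $X\in X^{\bot}$, a necessary condition for $\omega\in I'$ is $X(\omega)\in I$. Now take $\omega=U$: we have $X(U)\in I$ trivially. Take $\omega=X(U)$: then $X(X(U))=X^{2}(U)$, and the requirement $X^{2}(U)\in I=\langle U,X(U)\rangle$ is \emph{precisely} the definition of $X$ being of type $2$. So if $X$ is of type $2$, then $X^{2}(U)\in I$, and one must still check the other generators $Y\in X^{\bot}$: but $Y(U)$ lies in $I$ for any $Y\in X^{\bot}$ (since $X^{\bot}=(X^{\bot})^{\bot}$, i.e.\ $X^{\bot}$ is contained in its own orthogonal when restricted appropriately — more directly, $Y(U)=Y\lrcorner dU$ and $dU(Y,Z)=0$ for $Y,Z\in X^{\bot}$ would be needed; the clean route is that $U$ is already in $I'$ because $Y(U)$ restricted to the $2$-plane... ). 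The cleaner observation: $U\in I'$ iff $Y(U)\in I$ for all $Y\in X^\perp$, i.e.\ $dU(Y,Z)=0$ for all $Y,Z\in X^\perp$; since $X^\perp$ is $3$-dimensional this fails in general, so $U\notin I'$ and thus $I'$ has dimension at most $1$, forcing $\dim(X^\bot)'\geq 4$ always — consistent with Proposition~\ref{3_mai_integrabile}. Hence $I'$ is spanned (if nonzero) by a single combination $\mu U+\nu X(U)$, and such a combination lies in $I'$ iff $X(\mu U+\nu X(U))\in I$, which (modulo $I$) reduces to $\nu X^{2}(U)\in I$. Therefore $I'\neq 0$ iff $X^{2}(U)\in I$ iff $X$ is of type $2$.

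Thus the equivalence falls out: if $X$ is of type $2$, then $X(U)$ (or an appropriate multiple) generates $I'$, so $\dim I'=1$ and $\dim(X^{\bot})'=4$; conversely if $\dim(X^{\bot})'=4$ then $\dim I'=1$, so there is a nonzero $\omega\in I'$, forcing $X^{2}(U)\in I$ by the computation above, i.e.\ $X$ is of type $2$. The one point needing care is the step "$\omega\in I'$ and $\omega\neq0$ imply $X^2(U)\in I$": writing $\omega=\mu U+\nu X(U)$ one needs $\nu\neq0$, which holds because if $\nu\equiv0$ then $\omega=\mu U$ with $\mu\neq0$ somewhere, and then $U\in I'$ on an open set, contradicting complete non-integrability of $\mathcal{C}$ (equation~(\ref{completa_non_integrablita})), exactly as in the proof of Proposition~\ref{3_mai_integrabile}.

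The main obstacle I anticipate is purely bookkeeping: handling the arbitrary-coefficient ambiguity in the generator of $I'$ and making sure the argument is pointwise-robust (coefficients $\mu,\nu$ are functions that may vanish on subsets), and verifying that checking $X(\omega)\in I$ alone — rather than $Y(\omega)\in I$ for \emph{all} $Y\in X^{\bot}$ — is sufficient. For the latter, note that once $X^2(U)\in I$, the field $X$ is characteristic for $X^\bot$ (Proposition~\ref{three dimensions}), so $X^\bot$ is the pullback of a contact-type structure and one identifies $(X^\bot)'$ directly; alternatively one checks $Y(X(U))\in I$ for $Y\in X^\bot$ using $dU(\,\cdot\,,\,\cdot\,)$-skewness and $X^2(U)\in I$. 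Either way the verification is routine once the type-$2$ condition is in hand.
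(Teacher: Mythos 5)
Your direction ``$\dim(X^{\bot})^{\prime}=4\Rightarrow X$ of type $2$'' is correct and is essentially the paper's argument: the annihilator of $(X^{\bot})^{\prime}$ is a line in $I=\langle U,X(U)\rangle$ which, by Proposition \ref{3_mai_integrabile}, is not $\langle U\rangle$, hence is spanned by some $\sigma=\alpha U+X(U)$, and $X(\sigma)\in I$ then forces $X^{2}(U)\equiv0$ mod $\langle U,X(U)\rangle$. The gap is in the converse, and you have located it yourself: membership $\omega\in I^{\prime}$ requires $W(\omega)\in I$ for \emph{every} $W\in X^{\bot}$, so your ``iff'' in ``such a combination lies in $I^{\prime}$ iff $X(\mu U+\nu X(U))\in I$'' is only an ``only if'', and the implication ``type $2\Rightarrow I^{\prime}\neq0$'' --- the substantive half of the proposition --- is left unproved. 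Of the two repairs you sketch, the second one is actually false: type $2$ does \emph{not} imply $Y(X(U))\in\langle U,X(U)\rangle$ for all $Y\in X^{\bot}$, i.e.\ $X(U)$ itself is in general not a generator of $I^{\prime}$. For instance, $X=\partial_{p}+a\partial_{q}$ with $a=q/(1+p)$ satisfies $X(a)=0$, hence is of type $2$ by Theorem \ref{Tipo2_Tipo3}; yet $\partial_{p}\in X^{\bot}$ and $\partial_{p}(X(U))=-a_{p}\,dy\notin\langle U,X(U)\rangle$ because $a_{p}\neq0$. The correct generator is $\sigma=\alpha U+X(U)$ with $\alpha$ determined by $dX(U)(Y,Z)+\alpha\,dU(Y,Z)=0$ (solvable since $dU(Y,Z)\neq0$, the radical of $dU|_{X^{\bot}}$ being exactly $\langle X\rangle$); producing this $\alpha$ and checking $Y(\sigma)$, $Z(\sigma)$ is precisely the content of the paper's proof and is the step your argument is missing.

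Your first repair, by contrast, can be completed into a proof arguably shorter than the paper's: type $2$ is equivalent to $X$ being characteristic for $X^{\bot}$ (Proposition \ref{three dimensions}), i.e.\ $[X,X^{\bot}]\subset X^{\bot}$; writing $X^{\bot}=\langle X,Y,Z\rangle$ one gets $(X^{\bot})^{\prime}=X^{\bot}+\langle[Y,Z]\rangle$, of dimension at most $4$, and your lower bound $\dim(X^{\bot})^{\prime}\geq4$ from Proposition \ref{3_mai_integrabile} gives equality. As written, however (``one identifies $(X^{\bot})^{\prime}$ directly''), this is a gesture rather than an argument, so the forward implication remains open in your submission.
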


\begin{proof}
Let $\dim(X^{\bot})^{\prime}=4$. Then, by Lemma \ref{Lemma_Calcolo_Derivato}
applied to the case $\mathcal{P}=X^{\bot}$, $(X^{\bot})^{\prime}$ is described
by equation $\sigma=0$, with $\sigma$ linear combination of $U$ and $X(U)$%
\begin{equation}
\sigma=\alpha U+X(U) \label{forma_sigma}%
\end{equation}
(by Proposition \ref{3_mai_integrabile}, $\sigma$ is not a multiple of $U$)
and such that, for any $W\in X^{\bot}$, $W(\sigma)$ linearly depends on $U$
and $X(U)$. In particular,
\[
X^{2}(U)\equiv X(\sigma)\equiv0\,\,\text{mod}<U,X(U)>.
\]
Viceversa, let $X$ be of type $2$. To prove our statement we must find an
$\alpha$ in (\ref{forma_sigma}) such that $X^{\bot}$ is described by equation
$\sigma=0$. To this end, let $\{X,Y,Z\}$ be a basis of $X^{\bot}$, then
$X(\sigma)$, $Y(\sigma)$ and $Z(\sigma)$ must vanish on $X^{\bot}$. By
assumption it holds%
\[
X(\sigma)=X^{2}(U)+X(\alpha)U+\alpha X(U)\equiv0\,\,\text{mod}<U,X(U)>
\]
and, therefore, $X(\sigma)$ vanishes on $X^{\bot}$ for any choice of $\alpha$.
As to $Y(\sigma)$, relations%
\[
Y(\sigma)(X)=-X(\sigma)(Y)=0\,,\quad Y(\sigma)(Y)=d\sigma(Y,Y)=0
\]
hold true for any $\alpha$, whereas equation%
\[
0=Y(\sigma)(Z)=dX(U)(Y,Z)+\alpha\, dU(Y,Z)
\]
determines $\alpha$.
%\begin{equation}
%\alpha=-\frac{dX(U)(Y,Z)}{dU(Y,Z)} . \label{valore_alpha}%
%\end{equation}
Therefore, by choosing $\alpha$ in this way, one has that $Y(\sigma)$ vanishes
on $X^{\bot}$; the same holds for $Z(\sigma)$, due to the symmetry of roles of
$Y$ and $Z$.
\end{proof}

\begin{proposition}
\label{Unico_Tipo2}Let $\mathcal{D}\subset\mathcal{C}$ be a lagrangian, non
integrable distribution. Then, it contains at most one field of type $2$; if
such a field exists, it spans $(\mathcal{D}^{\prime})^{\bot}$.
\end{proposition}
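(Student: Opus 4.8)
The plan is to work with the lagrangian, non integrable distribution $\mathcal{D}=\langle X,Y\rangle$ and its derived flag. Since $\mathcal{D}$ is lagrangian, $\mathcal{D}^{\bot}=\mathcal{D}$, so $\mathcal{D}$ is $2$-dimensional and $\mathcal{D}^{\prime}$ is $3$-dimensional (it cannot be $2$-dimensional since $\mathcal{D}$ is not integrable, and it lies in $\mathcal{C}$ by the lagrangian condition $dU|_{\mathcal{D}}=0$, hence in the $4$-dimensional $\mathcal{C}$; more precisely $\mathcal{D}^{\prime}=\mathcal{D}+\langle[X,Y]\rangle$). Thus $\mathcal{D}^{\prime}$ is a $3$-dimensional subdistribution of $\mathcal{C}$, and by the discussion preceding Proposition \ref{three dimensions}, every such distribution is of the form $W^{\bot}$ for some Cartan field $W$; concretely $(\mathcal{D}^{\prime})^{\bot}$ is a line, contained in $\mathcal{D}^{\prime\bot}\subseteq\mathcal{D}^{\bot}=\mathcal{D}$, so $(\mathcal{D}^{\prime})^{\bot}$ is a distinguished line inside $\mathcal{D}$. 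I would denote by $W$ a generator of this line, so $\mathcal{D}^{\prime}=W^{\bot}$ and $W\in\mathcal{D}$.

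Next I would show that if $Z\in\mathcal{D}$ is of type $2$, then $Z$ spans $(\mathcal{D}^{\prime})^{\bot}$, i.e. $Z$ is a multiple of $W$. The idea: by Proposition \ref{Ortogonale_Tipo_2}, $Z$ being of type $2$ means $(Z^{\bot})^{\prime}$ is $4$-dimensional. Now $Z\in\mathcal{D}\subseteq\mathcal{C}$ and $Z^{\bot}$ is the $3$-dimensional distribution $\{U=0,Z(U)=0\}$ containing $Z$; since $\mathcal{D}$ is lagrangian and $Z\in\mathcal{D}$, we have $\mathcal{D}\subseteq Z^{\bot}$, hence $\mathcal{D}\subseteq Z^{\bot}$, so $\mathcal{D}^{\prime}\subseteq (Z^{\bot})^{\prime}$. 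But $\mathcal{D}^{\prime}$ is $3$-dimensional inside $\mathcal{C}$, while $(Z^{\bot})^{\prime}$ is $4$-dimensional; since $(Z^{\bot})^{\prime}$, being the derived distribution of something $3$-dimensional in $\mathcal{C}$, is obtained by adjoining one bracket, and by Proposition \ref{3_mai_integrabile} it cannot sit inside $\mathcal{C}$, it is exactly $\mathcal{C}+\langle\text{one transverse direction}\rangle$, hence $(Z^{\bot})^{\prime}\cap\mathcal{C}=Z^{\bot}$. From $\mathcal{D}^{\prime}\subseteq (Z^{\bot})^{\prime}$ and $\mathcal{D}^{\prime}\subseteq\mathcal{C}$ we get $\mathcal{D}^{\prime}\subseteq Z^{\bot}$; taking orthogonal complements and using that orthogonality reverses inclusions on subdistributions of $\mathcal{C}$, this yields $Z\in Z^{\bot\bot}\subseteq(\mathcal{D}^{\prime})^{\bot}=\langle W\rangle$. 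Hence $Z$ is a multiple of $W$, which simultaneously proves uniqueness (any two type $2$ fields in $\mathcal{D}$ are proportional) and the location statement.

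Finally I would observe that whether or not a type $2$ field actually exists, the above shows it can only be (a multiple of) $W$, so "at most one" is established; and when it exists it spans $(\mathcal{D}^{\prime})^{\bot}$ as claimed. The main obstacle I anticipate is the bookkeeping around the derived distribution $(Z^{\bot})^{\prime}$: one must be careful that the inclusion $\mathcal{D}^{\prime}\subseteq(Z^{\bot})^{\prime}$ genuinely forces $\mathcal{D}^{\prime}\subseteq Z^{\bot}$, which relies on Proposition \ref{3_mai_integrabile} to pin down $(Z^{\bot})^{\prime}\cap\mathcal{C}$, together with Lemma \ref{Lemma_Calcolo_Derivato} to describe $(Z^{\bot})^{\prime}$ via the Pfaffian system $\{U,Z(U)\}$. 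An alternative, perhaps cleaner, route is to argue entirely with Pfaffian systems: $\mathcal{D}^{\prime}$ is annihilated by $\{U,Z(U)\}$-type forms and by the form $\sigma=\alpha U+Z(U)$ of Proposition \ref{Ortogonale_Tipo_2}; comparing annihilators of $\mathcal{D}^{\prime}$ and of $Z^{\bot}$ then gives the proportionality directly. I would present whichever of these two is shorter once the details are checked.
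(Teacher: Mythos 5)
Your proof is correct, but it follows a genuinely different route from the paper's. The paper argues by direct computation: writing $\mathcal{D}=\langle X,Y\rangle$ with $X$ of type $2$, it evaluates
\[
dU(X,[X,Y])=X\bigl(X(U)(Y)\bigr)-X^{2}(U)(Y)=0,
\]
where the first term vanishes because $\mathcal{D}$ is lagrangian and the second because $X^{2}(U)$ is a combination of $U$ and $X(U)$, both of which annihilate $Y$; together with $dU(X,X)=dU(X,Y)=0$ this puts $X$ in the line $(\mathcal{D}^{\prime})^{\bot}$, and uniqueness follows. You instead use Proposition \ref{Ortogonale_Tipo_2} as a black box (type $2$ $\Leftrightarrow$ $\dim(Z^{\bot})^{\prime}=4$), the monotonicity $\mathcal{D}\subseteq Z^{\bot}\Rightarrow\mathcal{D}^{\prime}\subseteq(Z^{\bot})^{\prime}$, and the identification $(Z^{\bot})^{\prime}\cap\mathcal{C}=Z^{\bot}$ (via Proposition \ref{3_mai_integrabile} and a dimension count) to conclude $\mathcal{D}^{\prime}\subseteq Z^{\bot}$, and then dualize. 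Every step checks out; the only slip is the phrase \textquotedblleft it is exactly $\mathcal{C}+\langle\text{one transverse direction}\rangle$\textquotedblright, which should read $Z^{\bot}+\langle\text{one transverse direction}\rangle$ (as written it would give $(Z^{\bot})^{\prime}\cap\mathcal{C}=\mathcal{C}$), but your subsequent conclusion is the intended and correct one. The trade-off: the paper's computation is two lines and self-contained, while your argument is longer but purely structural, reusing established facts about derived distributions and making transparent why the type $2$ condition forces $Z$ into the distinguished line $(\mathcal{D}^{\prime})^{\bot}$.
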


\begin{proof}
Let $X\in\mathcal{D}$ be of type $2$. Then, if $\mathcal{D}=<X,Y>$, it holds
$dU(X,X)=dU(X,Y)=0$ and
\[
dU(X,[X,Y])=X(U)([X,Y])=X(X(U)(Y))-X^{2}(U)(Y)=0.
\]

\end{proof}

%The above proposition can also be obtained as a consequence of the following
%one, which is more precise.
%\begin{proposition}
%A $3$-dimensional distribution $\mathcal{P}\subset\mathcal{C}$ admits a non
%trivial characteristic subdistribution if and only if $X^{2}(U)$ depends on
%$U$ and $X(U)$, with $X$ being a generator of $\mathcal{P}^{\perp}$; in such a
%case $Char\mathcal{P}$ coincides with $\mathcal{P}^{\perp}$.
%\end{proposition}
%\begin{proof}
%Let $Y\in\mathcal{C}$ be a characteristic field for $\mathcal{P}=X^{\perp
%}=\{U=X(U)=0\}$. Then the following relations%
%\begin{equation}
%Y(U)=aU+bX(U) \label{carat1}%
%\end{equation}%
%\begin{equation}
%Y(X(U))=cU+eX(U) \label{carat2}%
%\end{equation}
%hold, for some functions $a$, $b$, $c$, $e$ on $J^{1}(\tau)$. From
%(\ref{carat1}) follows that, for any $Z\in\mathcal{P}$,%
%\[
%Y(U)(Z)=aU(Z)+bX(U)(Z)=0
%\]
%i.e. $Y$ is orthogonal to $\mathcal{P}$, i.e. parallel to $X$. Hence, the only
%possible characteristic direction for $\mathcal{P}$ is that spanned by $X$.
%But, substituting $Y=X$ in (\ref{carat1}) and (\ref{carat2}), one gets that
%(\ref{carat1}) is automatically satisfied ($a=0$, $b=1$), while (\ref{carat2})
%tells us that $X^{2}(U)$ depends on $U$ and $X(U)$.
%\end{proof}

%\medskip

\begin{proposition}
\label{Coppia_Per_Tipo2}Let $X\in\mathcal{C}$ be of type $2$. For any first
integral $f$ of $X$ the distribution $<X,X_{f}>$ is integrable. Conversely,
every 2-dimensional integrable distribution in $\mathcal{C}$ which contains
$X$ is of this form.
\end{proposition}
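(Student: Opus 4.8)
The plan is to prove the two implications separately; both come down to elementary linear algebra in the symplectic bundle $(\mathcal{C},dU|_{\mathcal{C}})$ together with the defining condition of type $2$, namely $X^{2}(U)\in\langle U,X(U)\rangle$. For the forward implication, let $f$ be a first integral of $X$. I would first record that $X_{f}\in X^{\bot}$ and $X\in X_{f}^{\bot}$: by (\ref{X_sigma}), $X_{f}(U)=df+\lambda U$, whence $dU(X,X_{f})=X(U)(X_{f})=-X_{f}(U)(X)=-X(f)=0$; combined with $U(X_{f})=0$ this gives $X_{f}\in X^{\bot}=\{U=X(U)=0\}$, and combined with $U(X)=0=df(X)$ it gives $X\in X_{f}^{\bot}=\{U=df=0\}$. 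If $X$ and $X_{f}$ are proportional there is nothing to prove (a line field is integrable); otherwise, since in a symplectic vector space $v^{\bot}=w^{\bot}$ forces $v$ and $w$ to be proportional, $X^{\bot}$ and $X_{f}^{\bot}$ are distinct $3$-dimensional subdistributions of the $4$-dimensional $\mathcal{C}$, so $X^{\bot}\cap X_{f}^{\bot}$ is $2$-dimensional, and since it contains $\langle X,X_{f}\rangle$ it coincides with it. Hence integrability of $\langle X,X_{f}\rangle$ amounts to showing $[X,X_{f}]\in X^{\bot}\cap X_{f}^{\bot}$.

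The engine for this is the observation: \emph{if $W\in\mathcal{C}$ is of type $2$ and $Z\in W^{\bot}$, then $[W,Z]\in W^{\bot}$}. Indeed $U([W,Z])=W(U(Z))-Z(U(W))-dU(W,Z)=0$ (all three terms vanish, the last since $Z\in W^{\bot}$), so $[W,Z]\in\mathcal{C}$; and applying $W$ to $W(U)(Z)=0$ yields $0=W^{2}(U)(Z)+W(U)([W,Z])$, so $W(U)([W,Z])=-W^{2}(U)(Z)$, which vanishes because $W^{2}(U)$ is a linear combination of $U$ and $W(U)$, both of which annihilate $Z\in W^{\bot}$. I then apply this with $(W,Z)=(X,X_{f})$ — legitimate since $X$ has type $2$ and $X_{f}\in X^{\bot}$ — to get $[X,X_{f}]\in X^{\bot}$, and with $(W,Z)=(X_{f},X)$ — legitimate since $X_{f}$, being hamiltonian, has type $2$ and $X\in X_{f}^{\bot}$ — to get $[X_{f},X]\in X_{f}^{\bot}$, i.e. $[X,X_{f}]\in X_{f}^{\bot}$. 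Therefore $[X,X_{f}]\in\langle X,X_{f}\rangle$, so $\langle X,X_{f}\rangle$ is integrable.

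For the converse, let $\mathcal{D}\subset\mathcal{C}$ be $2$-dimensional, integrable, with $X\in\mathcal{D}$ (so $X\neq0$). By Theorem \ref{Caratterizzazione_Integrabilita}, $\mathcal{D}=\langle X_{g},X_{h}\rangle$ with $g,h$ functionally independent and in involution. Writing $X=\alpha X_{g}+\beta X_{h}$ and using $X_{g}(g)=0$ together with $X_{h}(g)=0$ (the latter by involution of $g$ and $h$), one obtains $X(g)=0$; symmetrically $X(h)=0$, so both $g$ and $h$ are first integrals of $X$. Since $X_{g},X_{h}$ are pointwise independent and $X\neq0$, at least one of them — say $X_{g}$, after possibly interchanging $g$ and $h$ — is pointwise independent of $X$ on the neighborhood in question, and then $\mathcal{D}=\langle X,X_{g}\rangle$ with $g$ a first integral of $X$: the asserted form.

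I do not expect a genuine obstacle here. The only delicate points are clerical: keeping the two memberships $X_{f}\in X^{\bot}$ and $X\in X_{f}^{\bot}$ straight, and restricting to the locus where $X$ and $X_{f}$ are not proportional, so that the identification $\langle X,X_{f}\rangle=X^{\bot}\cap X_{f}^{\bot}$ is valid (the proportional case being trivial). The type-$2$ hypothesis on $X$ enters exactly once — to annihilate $W^{2}(U)(Z)$ in the engine — while for $X_{f}$ the analogous fact is automatic, hamiltonian fields being of type $2$.
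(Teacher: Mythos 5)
Your proof is correct and follows essentially the same route as the paper's: the paper's two displayed computations $dU(X,[X,X_f])=-X^{2}(U)(X_f)=0$ and $dU(X_f,[X,X_f])=df([X,X_f])=0$ are exactly your ``engine'' applied to $(W,Z)=(X,X_f)$ and $(W,Z)=(X_f,X)$ respectively, and the converse via Theorem \ref{Caratterizzazione_Integrabilita} is identical (you merely spell out the final step of selecting a generator $X_g$ independent of $X$, which the paper leaves implicit).
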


\begin{proof}
Let $f\in C^{\infty}(J^{1}(\tau))$ be a first integral of $X$, then the
lagrangian distribution $\mathcal{D}=<X,X_{f}>$ is integrable. In fact,
$[X,X_{f}]\in\mathcal{D}$ if and only if it is orthogonal to both $X$ and
$X_{f}$. But%
\[
dU(X_{f},[X,X_{f}])=(df-f_{z}U)([X,X_{f}])=df([X,X_{f}])=X(X_{f}%
(f))-X_{f}(X(f))=0
\]
(this holds for any $X\in\mathcal{C}$ having $f$ as a first integral) and also%
\[
dU(X,[X,X_{f}])=X(dU(X,X_{f}))-dX(U)(X,X_{f})=0-X^{2}(U)(X_{f})=0
\]
because $X^{2}(U)$ depends on $U$ and $X(U)$.

\smallskip Viceversa, let $\mathcal{D}\subset\mathcal{C}$ be a 2-dimensional
integrable distribution. Then, by Theorem
\ref{Caratterizzazione_Integrabilita}, $\mathcal{D}=<X_{f},X_{g}>$ with $f$
and $g$ in involution. Therefore, if $X\in\mathcal{D}$, then $f$ and $g$ are
first integrals of $X$.
\end{proof}

%\begin{corollary}
%A type 2 vector field $X$ is always of the form $X=\mu X_{f}+\nu
%X_{g}$ with $f$ and $g$ in involution.
%\end{corollary}

\subsection{Normal forms of Cartan fields}

In this section normal forms for Cartan fields are given. The following
proposition gives us the simplest possible form valid for any Cartan field.
For fields of type less than $4$, more precise normal forms can be obtained.
These are a consequence of next theorem, which characterizes non generic
Cartan fields in terms of involutive hamiltonian fields.

\begin{proposition}
For any field $X\in\mathcal{C}$ there exists a contact coordinate system in
which $X$ takes the form%
\[
X=a\widehat{\partial}_{x}+b\partial_{p}+c\partial_{q}\,, \quad a,b,c\in
C^{\infty}(J^{1}(\tau)).
\]

\end{proposition}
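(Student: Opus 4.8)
The plan is to reduce the claim to producing a first integral of $X$ that occurs as one of the contact coordinates. The point is that in any contact chart $(x,y,z,p,q)$ one has $\{U=0,\,dy=0\}=\langle\widehat{\partial}_x,\partial_p,\partial_q\rangle$: each of $\widehat{\partial}_x,\partial_p,\partial_q$ lies in $\mathcal{C}$ and annihilates $y$, while $\widehat{\partial}_y(y)=1$, so these three independent fields span the indicated $3$-plane. Hence, if we can exhibit a contact coordinate $y$ with $X(y)=0$, then $dy(X)=0$ together with $U(X)=0$ forces $X\in\langle\widehat{\partial}_x,\partial_p,\partial_q\rangle$, i.e. $X=a\widehat{\partial}_x+b\partial_p+c\partial_q$ with $a,b,c\in C^{\infty}(J^{1}(\tau))$.

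So the work is to build such a coordinate. First I would pick any non-constant first integral $g$ of $X$ (locally available since $X\neq 0$). Because $(\mathcal{C},dU|_{\mathcal{C}})$ is symplectic, $U$ is proportional to no nonzero exact $1$-form, so $dg\wedge U\neq 0$ and the hamiltonian field $X_{g}$ is nonvanishing. Next I would choose a first integral $f$ of $X_{g}$ that is functionally independent of $g$ — available near a generic point by the flow-box theorem, $g$ itself being one first integral of $X_{g}$. From $X_{g}(f)=0$ and the identity $X_{g}(f)=-X_{f}(g)$, which follows at once from (\ref{X_sigma}) applied to $\sigma=df$ and $\sigma=dg$ together with $U(X_{f})=U(X_{g})=0$, one gets that $f$ and $g$ are in involution. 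By property $5)$ of Theorem~\ref{th.sostiutivo} there is then a contact coordinate system $(x,y,z,p,q)$ with $x=f$, $y=g$, and in it $X(y)=X(g)=0$; the first paragraph then finishes the proof.

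Two remarks on why this is the right level of generality. One cannot in general sharpen the conclusion to $X=b\partial_p+c\partial_q$: that would require a second first integral of $X$, independent of $g$ and in involution with it, which typically fails (for generic, type $4$, fields) — this is exactly why the partner $f$ is taken to be a first integral of $X_{g}$ rather than of $X$. The only delicate point in the argument is the mere existence of $g$ and $f$ with the stated independence; this is a routine flow-box argument at a point where $X$ (respectively $X_{g}$) is nonzero, and costs no generality given the paper's standing locality convention. No computation beyond the bookkeeping above is needed, so I expect no real obstacle.
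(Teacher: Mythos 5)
Your argument is correct and is essentially the paper's own proof: both reduce to taking a first integral of $X$, promoting it to the contact coordinate $y$ via property $5)$ of Theorem~\ref{th.sostiutivo}, and observing that $X$ then lies in $\{U=0,\,dy=0\}=\langle\widehat{\partial}_{x},\partial_{p},\partial_{q}\rangle$ (equivalently, in $\partial_{q}^{\perp}=X_{y}^{\perp}$, which is how the paper phrases it). The only difference is that you spell out the construction of the involutive partner $f$ (as a first integral of $X_{g}$) needed to invoke that theorem, a step the paper leaves implicit.
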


\begin{proof}
Let $f$ be a first integral of $X$ (equivalently, $X_{f}$ be orthogonal to
$X$), then one may assume, according to Theorem \ref{th.sostiutivo}, that in a
certain contact chart is $f=y$ and consequently $X_{f}=\partial_{q}$, from
which the statement follows, because $\partial_{q}^{\perp}$ is spanned by
$\widehat{\partial}_{x}$, $\partial_{p}$, $\partial_{q}$.
\end{proof}

\begin{theorem}
\label{Teoremone} Let $X\in\mathcal{C}$, then the following equivalences hold:

\begin{itemize}
\item[1)] $X$ is of type $2$ or $3$;

\item[2)] $X=a X_{f}+b X_{g}$ with $f$ and $g$ in involution and
$a,b\in C^{\infty}(J^{1}(\tau))$;

\item[3)] $X=a\partial_{p}+b\partial_{q}$ in an appropriate
contact chart $(x,y,z,p,q)$, and $a,b\in C^{\infty}(J^{1}(\tau))$;

\item[4)] $X$ admits two independent first integrals in involution;

\item[5)] $X$ belongs to at least one $2$-dimensional integrable
subdistribution of $\mathcal{C}$.
\end{itemize}
\end{theorem}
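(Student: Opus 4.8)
The plan is to run a cycle of implications in which $2),3),4),5)$ function as mutual reformulations—obtained from Theorems~\ref{th.sostiutivo} and~\ref{Caratterizzazione_Integrabilita}—while the two arrows that link these to the type of $X$ (statement $1)$) carry the real content. Concretely, I would prove $3)\Rightarrow5)\Rightarrow1)\Rightarrow2)\Rightarrow4)\Rightarrow3)$, with the implication $1)\Rightarrow2)$ also used internally below.

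The soft steps first. For $3)\Rightarrow5)$: if $X=a\partial_p+b\partial_q$ then $X$ lies in the integrable distribution $\langle\partial_p,\partial_q\rangle=\langle X_x,X_y\rangle$. For $4)\Rightarrow3)$: part $5)$ of Theorem~\ref{th.sostiutivo} supplies a contact chart with $x=f$, $y=g$; expanding $X$ in the frame $\widehat\partial_x,\widehat\partial_y,\partial_p,\partial_q$ of $\mathcal{C}$, the equations $X(x)=X(y)=0$ kill the first two coefficients, leaving $3)$. For $2)\Rightarrow4)$: from $X=aX_f+bX_g$ with $f,g$ in involution one computes $X(f)=aX_f(f)+bX_g(f)=0$ and similarly $X(g)=0$, so $f$ and $g$ are first integrals of $X$; if they are functionally independent this is exactly $4)$, while if they are dependent one replaces $g$ by a first integral of the hamiltonian field $X_f$ independent of $f$ (automatically in involution with $f$) to reduce to the independent case.

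Now the two substantive arrows. For $5)\Rightarrow1)$: if $X$ lies in a $2$-dimensional integrable $\mathcal{D}\subset\mathcal{C}$, its annihilator $\mathcal{D}^{0}$ has rank $3$, contains $U$, and is preserved by $\mathcal{L}_X$, since for $\omega\in\mathcal{D}^{0}$ and $Z\in\mathcal{D}$ one has $(\mathcal{L}_X\omega)(Z)=-\omega([X,Z])=0$ by integrability of $\mathcal{D}$ and $X\in\mathcal{D}$; hence $U,X(U),X^{2}(U),X^{3}(U)$ all lie in $\mathcal{D}^{0}$, the rank of system~(\ref{X_i(U)}) is at most $3$, and $X$ is of type $2$ or $3$. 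For $1)\Rightarrow2)$ I split on the type. If $X$ is of type $2$, Proposition~\ref{Coppia_Per_Tipo2} makes $\langle X,X_f\rangle$ integrable for every first integral $f$ of $X$, and a rank count shows that $f$ can be chosen with $X_f\nparallel X$: otherwise the differentials of four functionally independent first integrals of $X$ (obtained by straightening $X$) would all lie in the rank-$2$ span of $U$ and $X(U)$, which is absurd. Theorem~\ref{Caratterizzazione_Integrabilita} then writes $\langle X,X_f\rangle=\langle X_{f_1},X_{f_2}\rangle$ with $f_1,f_2$ independent and in involution, and $X\in\langle X_{f_1},X_{f_2}\rangle$ gives $2)$. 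If $X$ is of type $3$, I would argue that $\mathcal{P}=\{U=X(U)=X^{2}(U)=0\}$ is a $2$-dimensional integrable distribution containing $X$: it has rank $2$ because the three forms are independent, it contains $X$ because every $X^{j}(U)$ annihilates $X$, and its Pfaffian system $\langle U,X(U),X^{2}(U)\rangle$ is $\mathcal{L}_X$-stable exactly because $X^{3}(U)$ depends on the earlier three; writing $\mathcal{P}=\langle X,Z\rangle$, stability forces $\omega([X,Z])=0$ for every $\omega$ in this Pfaffian system, i.e. $[X,Z]\in\mathcal{P}$, so $\mathcal{P}$ is Frobenius-integrable, and Theorem~\ref{Caratterizzazione_Integrabilita} once more yields $2)$.

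The main obstacle is the type-$3$ case of the last step—recognizing that $\{U=X(U)=X^{2}(U)=0\}$ is at once of rank $2$ and integrable, which is where the distinctive structure of non-generic Cartan fields enters and which has no analogue in the proof of Theorem~\ref{Caratterizzazione_Integrabilita}. The remaining care points (checking that the distributions produced are genuinely $2$-dimensional, and disposing of functionally dependent $f,g$ in $2)$) are routine but should not be skipped.
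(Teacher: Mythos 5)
Your proof is correct and takes essentially the same route as the paper: the substantive content is the same split of $1)\Rightarrow 2)$ into the type-$2$ case (Proposition \ref{Coppia_Per_Tipo2} plus Theorem \ref{Caratterizzazione_Integrabilita}) and the type-$3$ case (integrability of $\{U=X(U)=X^{2}(U)=0\}$), with the remaining arrows drawn from Theorems \ref{th.sostiutivo} and \ref{Caratterizzazione_Integrabilita}. Your organization as a single cycle and your $\mathcal{L}_X$-stability-of-the-annihilator argument for $5)\Rightarrow 1)$ are only cosmetic variants of the paper's pairwise equivalences and its explicit computation $X^{j}(U)\equiv X^{j-1}(a)\,df+X^{j-1}(b)\,dg$ for $2)\Rightarrow 1)$; you do usefully fill in two details the paper leaves implicit (choosing $f$ with $X_f\nparallel X$, and disposing of functionally dependent $f,g$).
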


\begin{proof}
1) implies 2). In fact, if $X$ is of type $2$ then the statement follows from
Proposition \ref{Coppia_Per_Tipo2}. If, instead, $X$ is of type $3$, then it
is characteristic for the distribution $\mathcal{D}_{X}=\{U=X(U)=X^{2}%
(U)=0\}=<X,Y>$, for some $Y\in X^{\perp}$. Hence, $\mathcal{D}_{X}$ is
integrable (because it contains $[X,Y]$) and, consequently, it is spanned by
two vector fields in involution (Theorem \ref{Caratterizzazione_Integrabilita}%
). Also, 2) implies 1). In fact, if we put $X^{0}=\text{id}$, in this case the
following relations hold:
\[
X^{j}(U)\equiv X^{j-1}(a)df+X^{j-1}(b)dg \,\,\,\text{mod}<U,\dots
,X^{j-1}(U)>, \quad 1\leq j \leq3.
\]
from which the linear dependence of $U,X(U),X^{2}(U),X^{3}(U)$ follows.

\smallskip Equivalence between 2) and 3) immediately follows from $4)$ of
Theorem \ref{th.sostiutivo}. Equivalence between 2) and 5) is just Theorem
\ref{Caratterizzazione_Integrabilita}.

\smallskip4) trivially follows from 2). Now, assuming 4) to hold, let $f$ and
$g$ be the two (independent) involutive first integrals, then: $X(f)=X(g)=0$,
$X_{f}(g)=0$, or also, in terms of orthogonality, $X\in<X_{f},X_{g}>^{\perp
}=<X_{f},X_{g}>$.
\end{proof}

\begin{remark}
We have already proved (Proposition \ref{Coppia_Per_Tipo2}) that, if
$X\in\mathcal{C}$ is of type $2$, then it is contained in a family of
$2$-dimensional integrable subdistributions of $\mathcal{C}$ (one for each
first integral). On the other hand, if $X$ is of type $3$, it is contained in
just one $2$-dimensional integrable subdistribution of $\mathcal{C}$, namely
the distribution $\mathcal{D}_{X}$ defined in the proof of the above theorem.
\end{remark}

%\begin{corollary}
%A $3$-dimensional distribution $\mathcal{P}\subset\mathcal{C}$
%admits integrable, $2$-dimensional subdistributions if and only if
%the field $X\in\mathcal{P}^{\perp}$ is of type $2$ o $3$.
%\end{corollary}
%\begin{proof}
%It is sufficient to prove that $X$ belongs to every integrable
%2-dimensional distribution in $\mathcal{P}$. But, as a matter of
%fact, $X$ belongs to any lagrangian distribution. In fact, let
%$\mathcal{D}\subset\mathcal{P}$ be such a distribution, and
%assume, by contradiction, that $X\notin\mathcal{D}$. Then it would be
%$\mathcal{P}=<X>\oplus\,\mathcal{D}$ and, hence,
%$dU|_{\mathcal{P}}=0$.
%\end{proof}
%\medskip
%According to proposition \ref{Coppia_Per_Tipo2}, vector fields of type $2$
%belong to many integrable lagrangian distributions. Let us see, now, what
%happens with type $3$ fields.
%\begin{proposition}
%Let $X\in\mathcal{C}$ be of type $3$. Then $X$ belongs to one and only one
%integrable 2-dimensional distribution in $\mathcal{C}$.
%\end{proposition}
%\begin{proof}
%As it was seen in the proof of theorem \ref{Teoremone}, the distribution
%$\mathcal{D}_{X}=\{U=X(U)=X^{2}(U)=0\}$ is integrable. On the other hand,
%given an integrable distribution $\mathcal{D}=<X,Z>$, let us show that it
%coincides with $\mathcal{D}_{X}$. In fact, by the orthogonality between $Z$ e
%$X$ it obviously holds%
%\[
%U(Z)=X(U)(Z)=0,
%\]
%while the integrability of $\mathcal{D}$ entails%
%\[
%X^{2}(U)(Z)=X(X(U)(Z))-X(U)([X,Z])=0.
%\]
%\end{proof}

We have seen in Theorem \ref{Teoremone} that, modulo a contact
transformation, a field $X\in\mathcal{C}$ of type less than $4$
takes the form $X=\partial _{p}+b\partial_{q}$ (as the type of a
field depends only on its direction, we have chosen $a=1$ in point
3) of above theorem). Then $X(U)=-dx-b dy$ and $X^{2}(U)=-X(b)dy$
from which it follows that $X^{2}(U)$ depends on $U$ and $X(U)$ if
and only if $b$ is a first integral of $X$. Therefore, on gets the
following

\begin{theorem}
\label{Tipo2_Tipo3} Let $X\in\mathcal{C}$. Then

\begin{enumerate}
\item[1)] $X$ is of type $2$ if and only if, in a suitable contact chart, it
takes the form
\begin{equation}
X=a\partial_{p}+b\partial_{q}\,,\,\,\, \text{with}\,\, X(b/a)=0;
\label{Tipo_2}%
\end{equation}

\item[2)] $X$ is of type $3$ if and only if (\ref{Tipo_2}) holds,
with $X(b/a)\neq0$.
\end{enumerate}
\end{theorem}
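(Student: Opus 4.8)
The plan is to fix the normal form supplied by Theorem~\ref{Teoremone} and then compute the iterated Lie derivatives $U,\,X(U),\,X^{2}(U),\,X^{3}(U)$ explicitly in coordinates, reading off the type from a single scalar function. First I would observe that a field of type $2$ or $3$ is in particular of type less than $4$, so by point $3)$ of Theorem~\ref{Teoremone} --- together with the fact, noted just after the definition of type, that the type is an invariant of the \emph{direction} of $X$, so that the leading coefficient may be normalised --- there is a contact chart $(x,y,z,p,q)$ in which $X=\partial_{p}+c\,\partial_{q}$ with $c=b/a\in C^{\infty}(J^{1}(\tau))$. The condition $X(b/a)=0$ appearing in the statement is unaffected by this rescaling, since $a^{-1}X$ applied to $b/a$ differs from $X(b/a)$ only by the nowhere-zero factor $a^{-1}$.

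The core computation is then short. Using $dU=dx\wedge dp+dy\wedge dq$ and $X\lrcorner U=0$, one gets $X(U)=X\lrcorner dU=-dx-c\,dy$; since $X(dx)=d(X(x))=0$ and $X(dy)=d(X(y))=0$, differentiating again yields $X^{2}(U)=-X(c)\,dy$ and $X^{3}(U)=-X^{2}(c)\,dy$. A one-line check (comparing coefficients of $dz$, then of $dx$) shows that $U$ and $X(U)$ are independent and that $dy\notin\langle U,X(U)\rangle$. Hence $X^{2}(U)\in\langle U,X(U)\rangle$ if and only if $X(c)=0$; in that case $X^{2}(U)=0=X^{3}(U)$, so the system $U,X(U),X^{2}(U),X^{3}(U)$ has rank exactly $2$. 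If instead $X(c)\neq0$, then $U,X(U),X^{2}(U)$ are independent while $X^{3}(U)=(X^{2}(c)/X(c))\,X^{2}(U)$ lies in their span, so the system has rank exactly $3$.

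This settles both directions at once. For the ``if'' part: if $X=a\partial_{p}+b\partial_{q}$ in some contact chart, pass to $a^{-1}X=\partial_{p}+(b/a)\partial_{q}$ and apply the computation --- $X(b/a)=0$ forces type $2$ and $X(b/a)\neq0$ forces type $3$. For the ``only if'' part: a type $2$ (resp.\ type $3$) field is of type $<4$, hence takes the form above by Theorem~\ref{Teoremone}, and then the computation forces $X(b/a)=0$ (resp.\ $X(b/a)\neq0$), the alternative placing $X$ in the other type class, a contradiction.

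I do not expect a genuine obstacle here; the two points requiring a little care are purely bookkeeping: checking that the scalar condition $X(b/a)=0$ is insensitive to rescaling $X$ within its line, and verifying that $dy$ is not a combination of $U$ and $X(U)$, so that membership of $X^{2}(U)$ in $\langle U,X(U)\rangle$ is governed \emph{precisely} by the vanishing of $X(b/a)$ rather than by some accidental collapse. Both are immediate in the fixed chart, and everything else is the routine Lie-derivative calculation already rehearsed in the paragraph preceding the statement.
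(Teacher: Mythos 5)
Your proposal is correct and follows essentially the same route as the paper: the paper's own argument is precisely the paragraph preceding the theorem, which normalises $X=\partial_{p}+b\partial_{q}$ via Theorem~\ref{Teoremone}, computes $X(U)=-dx-b\,dy$ and $X^{2}(U)=-X(b)\,dy$, and reads off the type from the vanishing of $X(b)$. Your additional checks (invariance of the condition under rescaling, and $dy\notin\langle U,X(U)\rangle$) are the right bookkeeping and are consistent with what the paper leaves implicit.
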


This result can be refined in the case of a field of type $2$.

\begin{theorem}
\label{Campi_Tipo2}A vector field $X\in\mathcal{C}$ is of type $2$
if and only if, in some contact chart, it takes one of the forms
\[
X=\partial_{p} \,\,\,\,\text{or}\,\,\,\, X=\partial_{p}+z \partial_{q}.
\]

\end{theorem}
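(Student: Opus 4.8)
The plan is to start from the normal form $X = a\partial_p + b\partial_q$ with $X(b/a)=0$ guaranteed by Theorem~\ref{Tipo2_Tipo3}, part 1), and to show that the first integral $h := b/a$ can be normalized to either a constant or to the coordinate $z$ by a further contact change of variables. Since multiplying $X$ by $1/a$ does not change its direction (nor its type), I would first write $X = \partial_p + h\partial_q$ with $X(h)=\partial_p(h)=0$, so that $h=h(x,y,z,q)$ does not depend on $p$. The field $X_h$ is hamiltonian, hence of type $2$, and by Proposition~\ref{Coppia_Per_Tipo2} the distribution $\langle X, X_h\rangle$ is integrable; moreover $h$ and, say, $x$ (which is also a first integral of $X=\partial_p+h\partial_q$ provided $\partial_p(h)=0$ — indeed $X(x)=0$) give two functions whose involutivity I would check. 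The key point is that $X$ admits first integrals, and I want to choose a clever pair so that Theorem~\ref{th.sostiutivo}(5) puts $X$ into a chart where its direction is $\partial_p + h\partial_q$ with $h$ a function of one "vertical'' variable only.

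The main case split is whether $dh$ is or is not proportional to $U$ along the relevant locus — more precisely, whether the single first integral structure of $X$ is "degenerate'' (all first integrals functionally dependent on one) or generic. Concretely: $X=\partial_p + h\partial_q$ with $h=h(x,y,z,q)$. If $h$ is (functionally) constant, i.e. $dh=0$, then after the affine change $q\mapsto q - h\,p$... — more carefully, one performs a contact transformation sending $\partial_p + c\partial_q \mapsto \partial_p$ when $c$ is constant, giving the first normal form $X=\partial_p$. Otherwise $h$ is a genuine first integral; then I claim $h$ together with a suitable companion first integral $g$ can be taken as two of the contact coordinates, and in the resulting chart the direction of $X$ becomes $\partial_p + z\partial_q$ (the role of $z$ being played by whichever of the five contact coordinates is "transverse'' to the common level sets). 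The clean way to see the dichotomy is: a type-$2$ field $X$ has a $3$-dimensional space of first integrals modulo functional dependence coming from $X^\perp = \{U=X(U)=0\}$, and the question is the rank of the restriction of $dU$ data; I would organize the argument so that the two sub-cases correspond exactly to whether one extra normalization freedom survives.

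The converse is immediate and I would dispatch it first or last in a sentence: for $X=\partial_p$ one has $X(U)=-dx$, $X^2(U)=0$, so $X$ is of type $2$; for $X=\partial_p+z\partial_q$ one has $X(U)=-dx-z\,dy$ and $X^2(U) = -X(z)\,dy = -p\,dy$, which is... — wait, one must be careful that $X(z)=\partial_p(z)+z\partial_q(z)=0$, so in fact $X^2(U)=0$ here too, confirming type $2$ directly. The hard part will be the forward direction, specifically producing the contact chart realizing the second normal form: one must verify that in the generic sub-case the first integral $h$ of $X$ can be completed to a contact coordinate system $(x,y,z,p,q)$ in which $X$ is literally $\partial_p+z\partial_q$, not merely $\partial_p + h(z)\partial_q$ for some function of one variable — this last reduction requires absorbing $h(z)$ by a contact transformation of the form $z\mapsto \varphi(z)$ together with the induced change on $p,q$, and checking it preserves $U$ up to a factor. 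That bookkeeping with the contact pseudogroup, rather than any deep idea, is where the work lies, and it is exactly the kind of normalization licensed by the "up to contact transformations'' remarks already made after~(\ref{generators_D}).
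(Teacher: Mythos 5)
There is a genuine gap: your case split is based on the wrong invariant. You reduce to $X=\partial_p+h\partial_q$ and propose that the dichotomy between the two normal forms is ``$h$ functionally constant'' versus ``$h$ a genuine (nonconstant) first integral''. This fails. Take $h=y$: the field $X=\partial_p+y\,\partial_q$ satisfies $X(U)=-dx-y\,dy=-d\bigl(x+\tfrac{y^2}{2}\bigr)$ and $X^2(U)=0$, so it is of type $2$ with nonconstant $h$; yet $X=-X_{x+y^2/2}$ is hamiltonian, hence (by Proposition~\ref{caso_4_4} and Theorem~\ref{th.sostiutivo}) contact-equivalent to $\partial_p$, not to $\partial_p+z\,\partial_q$. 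The invariant that actually separates the two forms is the integrability of the $4$-dimensional derived distribution $(X^\perp)'$ — equivalently, the Darboux class of the $1$-form $\sigma=\alpha U+X(U)$ whose kernel is $(X^\perp)'$: class $1$ ($\sigma=df$, i.e.\ $X$ is a multiple of a hamiltonian field) gives $\partial_p$; class $3$ ($\sigma=df-g\,dh$) gives $\partial_p+z\,\partial_q$ via the explicit contact chart built from $f,g,h$; class $5$ is excluded by Proposition~\ref{3_mai_integrabile}. That is the paper's route, and your sketch never identifies this invariant, so the ``hard part'' you defer (producing the chart in which $X=\partial_p+z\,\partial_q$) cannot be completed from your starting point.

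Two smaller but real problems. First, you assert $X(h)=\partial_p(h)=0$ for $h=b/a$; but $X(h)=\partial_p(h)+h\,\partial_q(h)=0$ does not give $\partial_p(h)=0$ unless $\partial_q(h)=0$, so the claim that $h$ is independent of $p$ is unjustified. Second, in the generic sub-case your construction of the chart is only asserted (``the role of $z$ being played by whichever coordinate is transverse\dots''); the actual work is to show that $X=X_f-gX_h$ and that $x=f$, $y=h$, $z=-g$ together with two explicitly computed functions $p,q$ form a contact chart, which is precisely where the Darboux normal form of $\sigma$ is needed. The converse direction of your argument (checking both model fields are of type $2$) is fine.
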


\begin{proof}
Let $(X^{\bot})^{\prime}$ be locally described by equation $\sigma=0$ (see
also Proposition \ref{Ortogonale_Tipo_2}). By Darboux theorem, one can choose
independent functions $f,g,h,k,l$ in such a way that, up to a factor, one of
the following three expressions holds: either%
\begin{equation}
\sigma=df \label{(4,5)_zera}%
\end{equation}
or%
\begin{equation}
\sigma=df-gdh \label{(4,5)_prima}%
\end{equation}
or
\begin{equation}
\sigma=df-gdh-kdl. \label{(4,5)_seconda}%
\end{equation}
Expression (\ref{(4,5)_seconda}) can be excluded because, otherwise,
$\{\sigma=0\}$ would be a contact structure containing a $3$-dimensional
distribution, $X^{\bot}$, such that $(X^{\bot})^{\prime}=\{\sigma=0\}$, which
is impossible by Proposition $\ref{3_mai_integrabile}$. If (\ref{(4,5)_zera})
holds, $X$ is a multiple of $X_{f}$ (Proposition \ref{caso_4_4}); on the other
hand, by Theorem \ref{th.sostiutivo}, there exists a contact transformation
sending $f$ into coordinate $x$, so that, modulo a factor,%
\[
X=\partial_{p}.
\]
Finally, in case (\ref{(4,5)_prima}) one has%
\begin{equation}
\label{X_di_appoggio}X=X_{\sigma}=X_{f}-gX_{h}.
\end{equation}
Hence,%
\[
X(U)=df-gdh-(f_{z}-gh_{z})U\,, \quad X^{2}(U)=-X_{h}(f)dg+X_{g}(f-gh)dh.
\]
But, being $X$ of type $2$, one gets%
\begin{equation}
-X_{h}(f)dg+X_{g}(f-gh)dh=\lambda U+\mu(df-gdh) \label{bingo3}%
\end{equation}
for some $\lambda,\mu\in C^{\infty}(J^{1}(\tau))$. As the contact form $U$ is
determined up to a factor, one may assume that $\lambda$ does not vanish.
Hence, it follows from (\ref{bingo3}) that%
\[
U=-\frac{X_{h}(f)}{\lambda} \left(  dg +\frac{\mu}{X_{h}(f)}df + \frac
{X_{g}(gh-f)-\mu g}{X_{h}(f)}dh \right)  .
\]
Hence the functions
\[
x=f,\quad y=h,\quad z=-g,\quad p = \frac{\mu}{X_{h}(f)},\quad q=\frac
{X_{g}(gh-f)-\mu g}{X_{h}(f)}%
\]
form a contact chart. Consequently, $X$ of (\ref{X_di_appoggio}) assumes the
form
\[
X=X_{x}+zX_{y}=\partial_{p}+z\partial_{q}.
\]

\end{proof}

\smallskip As a remarkable application of normal form (\ref{Tipo_2}), we prove
the following proposition.

\begin{proposition}
Let $\mathcal{D}\subset\mathcal{C}$ be a non integrable lagrangian
distribution, and let $(\mathcal{D}^{\prime})^{\bot}$ be spanned by vector
field $X$. Then $X$ is not of type $3$.
\end{proposition}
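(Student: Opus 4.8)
The plan is to argue by contradiction: suppose $X$ is of type $3$, where $X$ spans $(\mathcal{D}^{\prime})^{\bot}$ for a non integrable lagrangian distribution $\mathcal{D}$. Since $X$ has type $3$, Theorem \ref{Teoremone} tells us $X$ lies in a (unique) $2$-dimensional integrable subdistribution of $\mathcal{C}$; by the Remark following that theorem, this is the distribution $\mathcal{D}_X=\{U=X(U)=X^2(U)=0\}$, which coincides with $\langle X,Y\rangle$ for a suitable $Y\in X^{\bot}$. By Theorem \ref{Tipo2_Tipo3}, in a suitable contact chart $(x,y,z,p,q)$ we may write $X=\partial_p+b\partial_q$ with $X(b)\neq 0$; then $X(U)=-dx-b\,dy$ and $X^2(U)=-X(b)\,dy$, so $\mathcal{D}_X=\{U=0,\,dx=0,\,dy=0\}=\langle\partial_p,\partial_q\rangle$.

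**Relating $\mathcal{D}$ to the normal form.**
The key structural fact is that $X$ spans $(\mathcal{D}^{\prime})^{\bot}$, equivalently $\mathcal{D}^{\prime}=X^{\bot}$, so $\mathcal{D}\subset X^{\bot}=\{U=0,\,X(U)=0\}=\{U=0,\,dx+b\,dy=0\}$. Thus $\mathcal{D}$ is a $2$-dimensional lagrangian distribution inside the $3$-plane $X^{\bot}$, which (in the chosen chart) is spanned by $\partial_p$, $\partial_q$, and $W:=b\,\widehat\partial_x-\widehat\partial_y$ (a vector annihilated by $U$ and by $dx+b\,dy$). Since $\mathcal{D}$ is lagrangian and $\langle\partial_p,\partial_q\rangle$ is already lagrangian (isotropic and maximal) inside $X^{\bot}$, and $W$ is $dU$-orthogonal to neither (one checks $dU(W,\partial_p)=-b$, $dU(W,\partial_q)=1$, not both zero since at least one is nonzero), the only lagrangian plane in $X^{\bot}$ is $\langle\partial_p,\partial_q\rangle$ itself — this needs the standard linear-algebra fact that a $3$-dimensional subspace of a symplectic space whose radical is $1$-dimensional contains exactly one lagrangian plane, namely that radical's $dU|_{\mathcal{C}}$-orthogonal restricted appropriately; here $\mathrm{Rad}(dU|_{X^{\bot}})=\langle X\rangle$ and $\mathcal{D}$ must contain $X$. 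Hence $X\in\mathcal{D}$.

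**Deriving the contradiction.**
Now $X\in\mathcal{D}=\langle\partial_p,\partial_q\rangle=\mathcal{D}_X$, but $\mathcal{D}_X$ is integrable by construction, so $\mathcal{D}$ is integrable — contradicting the hypothesis that $\mathcal{D}$ is non integrable. Therefore $X$ cannot be of type $3$.

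**Main obstacle.**
The delicate point is the linear-algebra step identifying $\langle\partial_p,\partial_q\rangle$ as the unique lagrangian subdistribution of $X^{\bot}$ and, in particular, showing $X\in\mathcal{D}$; one must be careful that "$\mathcal{D}\subset X^{\bot}$" together with "$\mathcal{D}$ lagrangian" genuinely forces $\mathcal{D}$ to contain the radical $\langle X\rangle$ of $dU|_{X^{\bot}}$. This follows because any isotropic complement to the radical inside a $1$-corank form is at most $1$-dimensional, so a $2$-dimensional isotropic subspace of $X^{\bot}$ must meet $\langle X\rangle$ nontrivially, hence contains $X$; then $\mathcal{D}=X^{\bot}\cap X^{\bot}$... more directly, $\mathcal{D}\ni X$ together with $\mathcal{D}\subset X^{\bot}$ and $\dim\mathcal{D}=2$ and $\mathcal{D}$ isotropic pins down $\mathcal{D}=\mathcal{D}_X$ once we know $X^2(U)$ vanishes on $\mathcal{D}$ — which it does, since $\mathcal{D}\subset\mathcal{D}^{\prime}=X^{\bot}$ forces, via $X$ type $3$ and $\mathcal{D}$ isotropic, that $\mathcal{D}$ sits in the kernel of $X^2(U)=-X(b)dy$ as well. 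Everything else is a routine unwinding of the normal forms already established.
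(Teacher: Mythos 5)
Your reduction to the normal form $X=\partial_p+b\,\partial_q$ with $X(b)\neq 0$, the identification $\mathcal{D}_X=\langle\partial_p,\partial_q\rangle$, and the observation that a $2$-dimensional isotropic subspace of the $3$-dimensional space $X^{\bot}$ must contain the radical $\langle X\rangle$ of $dU|_{X^{\bot}}$ (hence $X\in\mathcal{D}$) are all fine. But the pivotal claim --- that $\langle\partial_p,\partial_q\rangle$ is the \emph{only} lagrangian plane inside $X^{\bot}$ --- is false. Since $dU$ descends to a symplectic form on the $2$-dimensional quotient $X^{\bot}/\langle X\rangle$ and every line in a $2$-dimensional symplectic space is isotropic, \emph{every} $2$-plane in $X^{\bot}$ containing $X$ is lagrangian; there is a whole $1$-parameter family of them. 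Concretely, with $W=b\,\widehat{\partial}_x-\widehat{\partial}_y$ one computes $dU(X,W)=b\,dU(\partial_p,\widehat{\partial}_x)-b\,dU(\partial_q,\widehat{\partial}_y)=-b+b=0$, so $\langle X,W\rangle$ is a lagrangian plane in $X^{\bot}$ distinct from $\langle\partial_p,\partial_q\rangle$ and not contained in $\{dy=0\}$. The patch in your final paragraph (``$\mathcal{D}$ sits in the kernel of $X^{2}(U)=-X(b)\,dy$'') is exactly the assertion $\mathcal{D}=\mathcal{D}_X$ that you need to prove, and no argument is offered for it.

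The underlying problem is that you never use the full strength of the hypothesis. From ``$X$ spans $(\mathcal{D}^{\prime})^{\bot}$'' you extract only ``$\mathcal{D}\subset X^{\bot}$'', which already follows from $X\in\mathcal{D}$ and $\mathcal{D}$ lagrangian; the genuinely new content is that $[\mathcal{D},\mathcal{D}]\subset\mathcal{D}^{\prime}=X^{\bot}$, i.e.\ $dU(X,[X,Y])=0$ for the second generator $Y$ of $\mathcal{D}$. If your argument were valid without this, it would show that no non-integrable lagrangian distribution can contain a type $3$ field at all, which contradicts the generic situation (Theorem \ref{Teoremone2} and section \ref{subsubsec:The_anal_case}, where generalized intermediate integrals of type $3$ live inside non-integrable $\mathcal{D}$ with $\dim\mathcal{D}^{\prime\prime}=5$). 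The paper's proof consists precisely of the computation you omit: writing $Y=\widehat{\partial}_x-\frac{1}{a}\widehat{\partial}_y+\cdots$ for the second generator and imposing $dU(X,[X,Y])=0$ gives $X(a)=0$, i.e.\ $X$ is of type $2$ by Theorem \ref{Campi_Tipo2}. Your setup can be completed along the same lines, but as written the proof does not go through.
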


\begin{proof}
Assume the type of $X$ less than $4$. Then it is $2$ or $3$. By Theorem
\ref{Tipo2_Tipo3}, in some contact coordinates $X$ takes the form
\[
X=\partial_{p}+a\partial_{q}\,,\,\,\,a\in C^{\infty}(J^{1}(\tau))
\]
(as the type only depends on the direction of $X$, the coefficient of
$\partial_{p}$ in (\ref{Tipo_2}) can be assumed equal to $1$). Let
$\mathcal{D}=<X,Y>$, then $Y\in X^{\bot}$ and, hence, is of the form%
\[
Y=\widehat{\partial}_{x}-\frac{1}{a}\widehat{\partial}_{y}+b\partial
_{p}+c\partial_{q}\text{,}%
\]
for some functions $b,c\in C^{\infty}(J^{1}(\tau))$. Let us now impose the
orthogonality between $X$ and $[X,Y]\in\mathcal{D}^{\prime}$. As $X\lrcorner
dU=-dx-ady$, one gets:%
\[
0=dU(X,Y)=-(dx+ady)([X,Y])=-[X,Y](x)-a[X,Y](y)=-\frac{X(a)}{a}\text{,}%
\]
so that $X(a)=0$, i.e., by Theorem \ref{Campi_Tipo2}, $X$ is of type $2$.
\end{proof}

\section{Normal forms of parabolic Monge-Amp\`{e}re equations}

\label{final_section}

In this section Theorems \ref{maintheor1} and \ref{maintheor2} are
eventually proved. Normal forms of parabolic MAE's are derived by
the corresponding normal forms of the associated characteristic
distributions. The relation between each normal form and the
existence of intermediate integrals is shown. Furthermore, the
existence of a complete integral for the general analytic
parabolic MAE's is proved.

\subsection{Intermediate integrals and their generalization}

\label{sec.inter.integrals}

\begin{definition}
Let $\mathcal{E}$ be a second order PDE. An \emph{intermediate integral} of
$\mathcal{E}$ is a function $f\in C^{\infty}(J^{1}(\tau))$ such that solutions
of the equations $f=k$, $k\in\mathbb{R}$, are also solutions of $\mathcal{E}$.
\end{definition}

In the case of MAE's, the following theorem provides a practical method for
finding intermediate integrals.

\begin{theorem}
[\cite{Alonso Blanco}]Let $\rho\in\Lambda^{2}(J^{1}(\tau))$ and $\mathcal{E}%
_{\rho}$ be the corresponding MAE. Then, $f\in C^{\infty}(J^{1}(\tau))$ is an
intermediate integral of $\mathcal{E}_{\rho}$ if and only if%
\begin{equation}
U\wedge df\wedge(X_{f}\rfloor\rho)=0. \label{Formula_Ric}%
\end{equation}

\end{theorem}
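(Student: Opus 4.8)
The plan is to characterize intermediate integrals through the defining property that every solution of $f = k$ is a solution of $\mathcal{E}_\rho$, and to translate this geometric condition into the algebraic identity \eqref{Formula_Ric}. First I would recall that a classical solution of $\mathcal{E}_\rho$ is a Legendrian surface $\Sigma \subset J^1(\tau)$ (an integral surface of $\mathcal{C}$, i.e. $U|_\Sigma = 0$) on which $\rho|_\Sigma = 0$; and that the level set $\{f = k\}$ carries the $3$-dimensional distribution $\{U = 0, df = 0\} = X_f^\perp$ (using Proposition \ref{three dimensions}), whose integral surfaces are the solutions of the first-order equation $f = k$. So the condition ``solutions of $f = k$ are solutions of $\mathcal{E}_\rho$'' means: for every $2$-dimensional integral surface $\Sigma$ of the distribution $\{U = 0, df = 0\}$, one has $\rho|_\Sigma = 0$.

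Next I would argue pointwise. Fix $\theta \in J^1(\tau)$. Inside the $3$-plane $X_f^\perp = \{U = 0, df = 0\}_\theta$, the integral elements of the distribution are exactly the $2$-planes on which $dU$ vanishes; since $X_f$ is characteristic for $X_f^\perp$ (it spans the radical of $dU$ restricted there, being the hamiltonian field), every such integral $2$-plane $P$ contains the direction $X_f$. Conversely, any $2$-plane in $X_f^\perp$ containing $X_f$ is an integral element (because $X_f \rfloor dU = df + \lambda U$ vanishes on $X_f^\perp$). Thus the condition becomes: for every line $\ell \subset X_f^\perp$, the $2$-plane $\langle X_f, \ell\rangle$ satisfies $\rho|_{\langle X_f, \ell\rangle} = 0$, i.e. $\rho(X_f, v) = 0$ for every $v \in X_f^\perp$. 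This says precisely that the $1$-form $X_f \rfloor \rho$ annihilates the $3$-plane $X_f^\perp = \{U = 0, df = 0\}$, which is equivalent to $X_f \rfloor \rho$ being a linear combination of $U$ and $df$, i.e. to $U \wedge df \wedge (X_f \rfloor \rho) = 0$.

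The main subtlety I anticipate is the direction ``intermediate integral $\Rightarrow$ \eqref{Formula_Ric}'': one must ensure that through each line $\ell$ in $X_f^\perp$ there actually passes an integral surface of the distribution, so that the hypothesis can be applied; this is the Cauchy-characteristic / Frobenius argument for first-order PDEs (the distribution $X_f^\perp$ has a well-defined characteristic foliation by the flow of $X_f$, and generic integral surfaces are obtained by propagating non-characteristic curves), and it uses the smoothness and locality conventions fixed in the paper. One also needs the elementary linear-algebra fact that a $1$-form on a $5$-dimensional space annihilates a codimension-$2$ subspace $\{U = df = 0\}$ iff it lies in $\langle U, df\rangle$ iff wedging it with $U \wedge df$ gives zero — and that $U$ and $df$ are independent wherever $f$ is a genuine intermediate integral (if $df \parallel U$ then $f$ is constant, a degenerate case). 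The converse direction is then immediate: if $X_f \rfloor \rho = \mu U + \nu\, df$, then on any Legendrian integral surface of $\{U = 0, df = 0\}$ containing $X_f$ we get $\rho(X_f, \cdot) = 0$, and since such surfaces are the solutions of $f = k$, they satisfy $\mathcal{E}_\rho$.

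Since this theorem is quoted from \cite{Alonso Blanco}, I would expect the paper to give only a brief proof or a reference; the proof sketch above is the natural self-contained argument, with the transfer between ``$\rho$ vanishes on the $R$-plane'' (the $J^2$-level definition \eqref{MA_Lychagin}) and ``$\rho$ vanishes on the Legendrian surface'' (the $J^1$-level picture) being the only place where one must be slightly careful, but this is exactly the standard equivalence between solutions as integral manifolds in $J^2$ and their projections to $J^1$.
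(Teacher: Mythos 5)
The paper offers no proof of this theorem: it is imported verbatim from the cited reference (Alonso-Blanco, Proc.\ AMS 2004), so there is nothing internal to compare your argument against. That said, your proof is correct and is essentially the natural argument behind the cited result. The two load-bearing points are exactly the ones you identify, and both check out: (i) the linear-algebra step — a Lagrangian $2$-plane $P\subset X_f^{\perp}$ satisfies $\langle X_f\rangle=\langle X_f\rangle^{\perp\perp}\subset P^{\perp}=P$, so the integral elements of $\{U=0,\,df=0\}$ are precisely the planes $\langle X_f,v\rangle$ with $v\in X_f^{\perp}$, and $\rho$ vanishing on all of them is equivalent to $X_f\rfloor\rho$ annihilating $X_f^{\perp}$, i.e.\ to $X_f\rfloor\rho\in\langle U,df\rangle$; and (ii) the existence, through every integral $2$-element, of an actual solution of $f=k$, supplied by the Cauchy-characteristic flow-out (note $X_f(U)\in\langle U,df\rangle$ and $X_f(df)=0$, so the flow of $X_f$ preserves the Pfaffian system). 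Two small caveats you gesture at but could make explicit: the argument uses generalized (Legendrian) solutions of $f=k$ rather than only $1$-graphs — if one insisted on classical solutions, the planes $\langle X_f,v\rangle$ realized as tangent planes would only form an open dense set and a continuity argument would be needed to conclude $\rho(X_f,v)=0$ for all $v$; and at points where $df\in\langle U\rangle$ one has $X_f=0$ and \eqref{Formula_Ric} holds vacuously, so the equivalence degenerates there rather than failing. Neither affects the validity of the proof.
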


Coming back to the parabolic case, the following proposition holds.

\begin{proposition}
\label{integrale_intermedio} A function $f\in C^{\infty}(J^{1}(\tau))$ is an
intermediate integral of $\mathcal{E}_{\omega}$ if and only if $X_{f}%
\in\mathcal{D}$, i.e. $X_{f}$ is characteristic for the equation. Furthermore,
as $\mathcal{D}$ is lagrangian, $f$ is a first integral of any characteristic
field of $\mathcal{E}_{\omega}$.
\end{proposition}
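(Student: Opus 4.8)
The plan is to deduce everything from the criterion of \cite{Alonso Blanco} recalled just above: $f$ is an intermediate integral of $\mathcal{E}_{\omega}$ if and only if $U\wedge df\wedge(X_{f}\rfloor\omega)=0$. Since $\omega$ is here the (essentially unique) degenerate representative, we may use the description of Section~\ref{sec.par.MAE} and write $\omega=X(U)\wedge Y(U)$ for any pair $X,Y$ spanning the characteristic lagrangian distribution $\mathcal{D}$. The first step is the purely algebraic expansion
\[
X_{f}\rfloor\omega=X(U)(X_{f})\,Y(U)-Y(U)(X_{f})\,X(U)=dU(X,X_{f})\,Y(U)-dU(Y,X_{f})\,X(U),
\]
where we used $X(U)=X\rfloor dU$, $Y(U)=Y\rfloor dU$. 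It is also convenient to note, by (\ref{X_sigma}) applied to $\sigma=df$, that $X_{f}(U)=df+\lambda U$, hence $U\wedge df=U\wedge X_{f}(U)$.

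For the ``if'' direction, assume $X_{f}\in\mathcal{D}$. As $\mathcal{D}$ is lagrangian, $dU(X,X_{f})=dU(Y,X_{f})=0$, so $X_{f}\rfloor\omega=0$ and (\ref{Formula_Ric}) holds trivially. For the converse, suppose (\ref{Formula_Ric}) holds; the case $X_{f}=0$ (equivalently $df=0$) being trivial, assume $X_{f}\neq0$ and, for contradiction, $X_{f}\notin\mathcal{D}$. Then $X_{f},X,Y$ are pointwise independent sections of $\mathcal{C}$, so, since the map $\chi$ of (\ref{Chi_map}) is an isomorphism, the $1$-forms $U,X(U),Y(U),X_{f}(U)$ are linearly independent. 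Rewriting (\ref{Formula_Ric}) with the expansions above yields
\[
U\wedge X_{f}(U)\wedge\big(dU(X,X_{f})\,Y(U)-dU(Y,X_{f})\,X(U)\big)=0 .
\]
Because $U,X_{f}(U)$ are independent and $\langle X(U),Y(U)\rangle$ meets $\langle U,X_{f}(U)\rangle$ only in $0$, this forces $dU(X,X_{f})\,Y(U)-dU(Y,X_{f})\,X(U)=0$, hence $dU(X,X_{f})=dU(Y,X_{f})=0$. Thus $X_{f}\in\mathcal{D}^{\bot}=\mathcal{D}$, contradicting the assumption; therefore $X_{f}\in\mathcal{D}$, i.e. $X_{f}$ is characteristic for $\mathcal{E}_{\omega}$.

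Finally, for the last assertion let $V\in\mathcal{D}$ be any characteristic field. Using $df=X_{f}(U)-\lambda U=X_{f}\rfloor dU-\lambda U$ together with $U(V)=0$, one gets $V(f)=df(V)=dU(X_{f},V)$, which vanishes since $X_{f}$ and $V$ both lie in the lagrangian distribution $\mathcal{D}$; hence $f$ is a first integral of every characteristic field of $\mathcal{E}_{\omega}$. The step I expect to be the main obstacle is the linear-algebra bookkeeping in the converse: one must ensure that vanishing of the triple product $U\wedge X_{f}(U)\wedge(\cdots)$ genuinely forces \emph{both} coefficients $dU(X,X_{f})$ and $dU(Y,X_{f})$ to vanish, and this is exactly where the hypothesis $X_{f}\notin\mathcal{D}$ is used, being translated through $\chi$ into independence of $U,X(U),Y(U),X_{f}(U)$. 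Everything else reduces to the contraction formula for $X_{f}\rfloor(X(U)\wedge Y(U))$ and the lagrangian property of $\mathcal{D}$.
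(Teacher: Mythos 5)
Your proof is correct and follows essentially the same route as the paper: both apply the criterion (\ref{Formula_Ric}) with $\rho=\omega=X(U)\wedge Y(U)$ and reduce the matter to the vanishing of $X_{f}\rfloor\omega=Y(f)X(U)-X(f)Y(U)$ modulo $\langle U,df\rangle$, plus the lagrangian property for the final claim. The only minor difference is that the paper identifies $X_{f}$ outright with a multiple of $W=Y(f)X-X(f)Y\in\mathcal{D}$ by solving $W(U)=\alpha\,df+\beta U$, whereas you conclude $X_{f}\in\mathcal{D}^{\bot}=\mathcal{D}$ by contradiction; your version is a bit more careful about the ``if'' direction and the degenerate cases (e.g.\ $X_{f}=0$, or $\alpha=0$ in the paper's division).
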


\begin{proof}
If $\omega=X(U)\wedge Y(U)$, with $X$, $Y$ generating the characteristic
distribution of $\mathcal{E}_{\omega}$, then, by taking $\rho=\omega$ in
(\ref{Formula_Ric}), one gets%
\begin{equation}
U\wedge df\wedge W(U)=0 \label{FormulaRic2}%
\end{equation}
with $W=Y(f)X-X(f)Y$. But from (\ref{FormulaRic2}) follows $W(U)=\alpha
df+\beta U$ and, by dividing by $\alpha$, we obtain%
\begin{equation}
\frac{1}{\alpha}W(U)=df+\frac{\beta}{\alpha}U. \label{prima_espressione}%
\end{equation}
On the other hand $X_{f}(U)=df-f_{z}U$, so that, subtracting
(\ref{prima_espressione}) from it, one gets
\[
\left(  X_{f}-\frac{1}{\alpha}W\right)  (U)=\lambda U
\]
from which follows that $X_{f}-\frac{1}{\alpha}W=0$ (otherwise, it would be a
non-trivial characteristic field of $\mathcal{C}$), and the proposition follows.
\end{proof}

\begin{theorem}
\label{Goursat_Distributions}Let $\mathcal{D}\subset\mathcal{C}$ be the
characteristic distribution associated with $\mathcal{E}_{\omega}$. Then, such
equation admits intermediate integrals if and only if: 1) $\mathcal{D}$ is
integrable or 2) $\mathcal{D}^{\prime\prime}$ is $4$-dimensional and
integrable. In the first case, intermediate integrals are all and only the
functions of the form $f=\phi(f_{1},f_{2},f_{3})$ with $\phi$ arbitrary
function of three real variables and $f_{1},f_{2},f_{3}$ independent first
integrals of $\mathcal{D}$; in the second case, there exists (up to functional
dependence) only one intermediate integral, given by the function $f$ such
that $\mathcal{D}^{\prime\prime}=\{df=0\}$.
\end{theorem}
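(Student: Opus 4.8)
The plan is to reduce everything to the relationship between intermediate integrals and characteristic hamiltonian fields already established in Proposition \ref{integrale_intermedio}, together with the structural theorems on integrable distributions (Theorem \ref{Caratterizzazione_Integrabilita}) and on the derived flag. First I would recall that, by Proposition \ref{integrale_intermedio}, $f$ is an intermediate integral of $\mathcal{E}_\omega$ if and only if $X_f\in\mathcal{D}$. Since $\mathcal{D}$ is lagrangian, this is equivalent to $df$ vanishing on $\mathcal{D}^\bot=\mathcal{D}$, i.e. $f$ is a first integral of every field of $\mathcal{D}$. So the question becomes: for which $\mathcal{D}$ does the distribution $\mathcal{D}$ (viewed as a Pfaffian system) admit enough first integrals to produce a function $f$ with $X_f\in\mathcal{D}$, and how many such $f$ are there?

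Next I would split into the two cases. If $\mathcal{D}$ is integrable, then by Theorem \ref{Caratterizzazione_Integrabilita} (and its corollary) $\mathcal{D}=\{df_1=df_2=df_3=0\}$ for independent first integrals $f_1,f_2,f_3$, with $U$ depending linearly on $df_1,df_2,df_3$. Any $f=\phi(f_1,f_2,f_3)$ is then a first integral of $\mathcal{D}$; I must check that $X_f\in\mathcal{D}$, which follows because $df=\sum \partial_i\phi\, df_i$ vanishes on $\mathcal{D}=\mathcal{D}^\bot$, hence $X_f\perp\mathcal{D}$ and so $X_f\in\mathcal{D}$ again by lagrangianity. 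Conversely, if $f$ is an intermediate integral then $f$ is a first integral of $\mathcal{D}$, and since $\mathcal{D}=\{df_1=df_2=df_3=0\}$, $df$ must be a combination of $df_1,df_2,df_3$, which by the Frobenius/implicit function argument forces $f$ to be a function of $f_1,f_2,f_3$ (using that $U\notin\langle df\rangle$ rules out degenerate cases). This establishes case 1.

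For the non-integrable case, I would argue that existence of an intermediate integral $f$ means $X_f\in\mathcal{D}$; by Proposition \ref{Unico_Tipo2} a non-integrable lagrangian $\mathcal{D}$ contains at most one field of type $\le 3$, and a hamiltonian field is of type $2$, so at most one intermediate integral up to functional dependence. When $X_f\in\mathcal{D}$ exists and spans $(\mathcal{D}')^\bot$ (again Proposition \ref{Unico_Tipo2}), I would compute the derived flag: $X_f^\bot=\{U=df=0\}$ and, as in the proof of Proposition \ref{caso_4_4}, the relevant derived system is $\{df=0\}$; one then checks, using Lemma \ref{Lemma_Calcolo_Derivato} and the fact that $\mathcal{D}'$ is $3$-dimensional with $(\mathcal{D}')^\bot=\langle X_f\rangle$, that $\mathcal{D}''=\{df=0\}$, which is $4$-dimensional and integrable. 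Conversely, if $\mathcal{D}''$ is $4$-dimensional and integrable, then $\mathcal{D}''=\{df=0\}$ for some $f$; since $\mathcal{D}\subset\mathcal{D}''$ we get $df|_{\mathcal{D}}=0$, so $f$ is a first integral of $\mathcal{D}$ and (by lagrangianity, as in case 1) $X_f\in\mathcal{D}$, giving the intermediate integral, and uniqueness follows because $\mathcal{D}''=\{df=0\}$ pins down $f$ up to functional dependence. Finally I would note that if $\mathcal{D}$ is non-integrable and $\mathcal{D}''$ is either $5$-dimensional or $4$-dimensional but non-integrable, no intermediate integral can exist, since its existence would force $\mathcal{D}''=\{df=0\}$ by the computation just made.

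The main obstacle I anticipate is the derived-flag bookkeeping in the non-integrable case: carefully showing that $X_f\in\mathcal{D}$ forces $\mathcal{D}''$ to be exactly $\{df=0\}$ (rather than something smaller or non-integrable), and conversely extracting the single intermediate integral cleanly from the hypothesis on $\mathcal{D}''$. This requires a precise use of Lemma \ref{Lemma_Calcolo_Derivato} together with the dimension count $\dim\mathcal{D}=2$, $\dim\mathcal{D}'=3$ (non-integrable lagrangian case), and tracking how $U$, $X_f(U)=df-f_zU$, and their Lie derivatives along fields of $\mathcal{D}$ interact — essentially the same mechanism as in Propositions \ref{caso_4_4} and \ref{Ortogonale_Tipo_2}, but applied to $\mathcal{D}$ rather than to $X^\bot$ for a single field $X$.
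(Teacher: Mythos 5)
Your proposal is correct and follows essentially the same route as the paper: reduce to $X_f\in\mathcal{D}$ via Proposition \ref{integrale_intermedio}, handle the integrable case with Theorem \ref{Caratterizzazione_Integrabilita}, and in the non-integrable case use Proposition \ref{Unico_Tipo2} to identify $\mathcal{D}'=X_f^{\bot}$ and conclude $\mathcal{D}''=\{df=0\}$ (you even supply the converse direction, which the paper leaves implicit). One tiny point: Proposition \ref{Unico_Tipo2} asserts uniqueness only for fields of type $2$, not of type $\le 3$, but since hamiltonian fields are of type $2$ this does not affect your argument.
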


\begin{proof}
According to Proposition \ref{integrale_intermedio}, $f$ is an intermediate
integral if and only if $X_{f}\in\mathcal{D}$. If $\mathcal{D}$ is integrable,
then $\mathcal{D}=<X_{f_{1}},X_{f_{2}}>$ with $f_{1}$ and $f_{2}$ in
involution. Hence $X_{f_{1}}(f)=X_{f_{2}}(f)=0$ which proves the statement in
case 1).

If, instead, $\mathcal{D}$ is not integrable and $X_{f}\in\mathcal{D}$, then
$\mathcal{D}^{\prime}=X_{f}^{\bot}$ (see Proposition \ref{Unico_Tipo2}). It is
easily checked that $\mathcal{D}^{\prime\prime}=\{df=0\}$; in fact, two vector
fields are orthogonal to $X_{f}$ if and only if both have $f$ as a first
integral, so that their commutator vanishes on $df$.
\end{proof}

\medskip

It follows from the previous theorem that there exist parabolic MAE's without
intermediate integrals: in fact, as we shall see later, these are the
majority. For this reason, it is interesting to consider possible extensions
of the classical notion of intermediate integral. Note that a field $X$ is a
multiple of an $X_{f}$, with $f$ intermediate integral of $\mathcal{E}%
_{\omega}$, if and only if $X$ is a field of type $2$ in $\mathcal{D}$ such
that ($X^{\bot})^{\prime}$ is integrable. If one checks the last condition
out, one obtains \emph{nonholonomic intermediate integrals} in the sense of
\cite{KLR}.

\begin{definition}
\label{Int_Int_Non_Olo}Let $\mathcal{D}\subset\mathcal{C}$ be the
characteristic distribution associated with $\mathcal{E}_{\omega}$. A
\emph{nonholonomic intermediate integral} of $\mathcal{E}_{\omega}$ is a type
$2$ vector field contained in $\mathcal{D}$.
\end{definition}

\begin{theorem}
\label{Lychagin_Distributions}If $\mathcal{D}^{\prime\prime}$ is
$4$-dimensional, then $\mathcal{E}_{\omega}$ admits exactly one nonholonomic
intermediate integral $X\in\mathcal{D}$ which spans $(\mathcal{D}^{\prime
})^{\bot}$. Such an integral is classical if $\mathcal{D}^{\prime\prime}$ is
integrable and genuinely nonholonomic otherwise.
\end{theorem}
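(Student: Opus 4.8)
The plan is to analyze the two possibilities for $\mathcal{D}^{\prime\prime}$ separately, using the characterizations of type $2$ fields and of integrable distributions established earlier. First I would dispose of the integrable case: if $\mathcal{D}^{\prime\prime}$ is $4$-dimensional \emph{and} integrable, then by Theorem \ref{Goursat_Distributions} there is a (unique up to functional dependence) classical intermediate integral $f$ with $\mathcal{D}^{\prime\prime}=\{df=0\}$, and $X_f\in\mathcal{D}$; by Proposition \ref{integrale_intermedio} together with Definition \ref{Int_Int_Non_Olo} this $X_f$ is a nonholonomic intermediate integral, clearly classical. That the proof of Theorem \ref{Goursat_Distributions} already shows $\mathcal{D}^{\prime}=X_f^{\bot}$, hence $(\mathcal{D}^{\prime})^{\bot}=X_f$, gives the claim that it spans $(\mathcal{D}^{\prime})^{\bot}$. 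Uniqueness follows because any type $2$ field $Y\in\mathcal{D}$ lies in $(\mathcal{D}^{\prime})^{\bot}$ by Proposition \ref{Unico_Tipo2}, which is one-dimensional.

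The substantive case is $\mathcal{D}^{\prime\prime}$ $4$-dimensional but \emph{non integrable}. Here $\mathcal{D}$ is automatically non integrable (otherwise $\mathcal{D}^{\prime\prime}=\mathcal{D}$ would be $2$-dimensional). Since $\mathcal{D}$ is lagrangian and non integrable, Proposition \ref{Unico_Tipo2} says it contains \emph{at most} one type $2$ field, and if one exists it spans $(\mathcal{D}^{\prime})^{\bot}$. So the real work is the \emph{existence} of a type $2$ field in $\mathcal{D}$. My approach is to let $X$ be a generator of $(\mathcal{D}^{\prime})^{\bot}$ — a well-defined line since $\mathcal{D}^{\prime}$ is $3$-dimensional (it is strictly between the $2$-dimensional $\mathcal{D}$ and the $4$-dimensional $\mathcal{D}^{\prime\prime}$, the latter being the derived distribution of $\mathcal{D}^{\prime}$, and $\mathcal{D}^{\prime}\subset\mathcal{C}$ because $\mathcal{D}$ is lagrangian so $dU|_{\mathcal{D}}=0$). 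First I would check that this $X$ actually lies in $\mathcal{D}$: since $\mathcal{D}\subset\mathcal{D}^{\prime}$, orthogonality gives $(\mathcal{D}^{\prime})^{\bot}\subset\mathcal{D}^{\bot}=\mathcal{D}$. Then I must show $X$ is of type $2$, i.e. by Proposition \ref{Ortogonale_Tipo_2} that $(X^{\bot})^{\prime}$ is $4$-dimensional. The key identity to exploit is $X^{\bot}=\mathcal{D}^{\prime}$: indeed $X$ spans $(\mathcal{D}^{\prime})^{\bot}$, so $X^{\bot}=((\mathcal{D}^{\prime})^{\bot})^{\bot}=\mathcal{D}^{\prime}$ (both are $3$-dimensional and the first contains the second, using that $\bot$ is an involution on subdistributions of the symplectic bundle $\mathcal{C}$). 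Hence $(X^{\bot})^{\prime}=(\mathcal{D}^{\prime})^{\prime}=\mathcal{D}^{\prime\prime}$, which is $4$-dimensional by hypothesis. Therefore $X$ is of type $2$, so it is a nonholonomic intermediate integral, it is the unique one, and it spans $(\mathcal{D}^{\prime})^{\bot}$.

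Finally, for the classical-versus-genuinely-nonholonomic dichotomy: by Proposition \ref{caso_4_4}, $X$ is a multiple of a hamiltonian field $X_f$ if and only if $(X^{\bot})^{\prime}=\mathcal{D}^{\prime\prime}$ is integrable; and by Proposition \ref{integrale_intermedio} such an $f$ is then a classical intermediate integral. So $X$ comes from a classical intermediate integral precisely when $\mathcal{D}^{\prime\prime}$ is integrable, and is genuinely nonholonomic otherwise — which also re-derives the split between the two cases consistently with Theorem \ref{Goursat_Distributions}. The main obstacle I anticipate is purely bookkeeping: keeping straight the dimensions of the members of the derived flag and justifying $X^{\bot}=\mathcal{D}^{\prime}$ cleanly (in particular that $\mathcal{D}^{\prime}\subset\mathcal{C}$, so that $\bot$ behaves as an involution), rather than any deep new computation.
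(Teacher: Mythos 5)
Your argument is correct and is precisely the one the paper intends: its proof is the one-line remark that the theorem follows from Propositions \ref{Ortogonale_Tipo_2} and \ref{Unico_Tipo2} and Theorem \ref{Goursat_Distributions}, and your write-up simply fills in those steps (taking $X$ to span $(\mathcal{D}^{\prime})^{\bot}$, identifying $X^{\bot}=\mathcal{D}^{\prime}$ so that $(X^{\bot})^{\prime}=\mathcal{D}^{\prime\prime}$ is $4$-dimensional, and settling the classical versus genuinely nonholonomic dichotomy via Proposition \ref{caso_4_4}). No gaps; your separate treatment of the integrable case is a harmless redundancy, since your second paragraph already covers it.
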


\begin{proof}
It is an easy corollary of Propositions \ref{Ortogonale_Tipo_2},
\ref{Unico_Tipo2} and Theorem \ref{Goursat_Distributions}.
\end{proof}

\smallskip Below we propose a further generalization.

\begin{definition}
\label{integrale_intermedio_generalizzato}A \emph{generalized intermediate
integral} of a parabolic MAE $\mathcal{E}_{\omega}$ is a field $X\in
\mathcal{D}$ of type less than $4$.
\end{definition}

Note that an intermediate integral of $\mathcal{E}_{\omega}$ is a
$4$-dimensional foliation of $J^{1}(\tau)$ whose leaves (which are first order
scalar differential equations) are such that their solutions are also
solutions of $\mathcal{E}_{\omega}$. By applying the method of
Lagrange-Charpit one obtains a complete integral ($2$ functional parameters)
of each leaf ($\infty^{1}$ leaves), so that one obtains a family of
$\infty^{3}$ solutions of $\mathcal{E}_{\omega}$.

\begin{definition}
\label{Integrale_Completo}A complete integral of $\mathcal{E}_{\omega}$ is a
2-dimensional foliation of $J^{1}(\tau)$ whose leaves are solutions or,
equivalently, a $2$-dimensional integrable distribution $\widehat{\mathcal{D}%
}\subset\mathcal{C}$ such that $\omega|_{\widehat{\mathcal{D}}_{\theta}}=0$
for any $\theta\in J^{1}(\tau)$.
\end{definition}

Let us now show the (almost) equivalence of the two above definitions.

\begin{proposition}
\label{Equivalenza_Intermedio_Completo}Starting from a generalized
intermediate integral, one can construct a complete integral, and viceversa.
\end{proposition}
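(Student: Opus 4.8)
The plan is to establish both implications of Proposition~\ref{Equivalenza_Intermedio_Completo} by translating everything into the language of the derived flag and the normal forms for Cartan fields already developed. Recall that a complete integral is a $2$-dimensional integrable subdistribution $\widehat{\mathcal D}\subset\mathcal C$ on which $\omega$ vanishes, while a generalized intermediate integral is a field $X\in\mathcal D$ of type $<4$. First I would prove the easier direction: \emph{from a complete integral to a generalized intermediate integral}. Given $\widehat{\mathcal D}$ integrable with $\omega|_{\widehat{\mathcal D}}=0$, by Theorem~\ref{Caratterizzazione_Integrabilita} one has $\widehat{\mathcal D}=\langle X_f,X_g\rangle$ with $f,g$ independent and in involution. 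Since $\omega$ vanishes on $\widehat{\mathcal D}$ and $\mathcal D=\operatorname{Rad}\omega|_{\mathcal C}$, the intersection $\widehat{\mathcal D}\cap\mathcal D$ must be nontrivial: indeed $\omega|_{\mathcal C}$ has rank $2$ with kernel exactly $\mathcal D$, so a $2$-plane on which it vanishes meets $\mathcal D$. Any nonzero $X\in\widehat{\mathcal D}\cap\mathcal D$ is then a field in $\mathcal D$ contained in an integrable $2$-dimensional subdistribution of $\mathcal C$, hence of type $2$ or $3$ by Theorem~\ref{Teoremone}, i.e.\ a generalized intermediate integral.

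For the converse — \emph{from a generalized intermediate integral to a complete integral} — let $X\in\mathcal D$ have type $2$ or $3$. By Theorem~\ref{Teoremone}, $X$ lies in some $2$-dimensional integrable subdistribution $\widehat{\mathcal D}=\langle X_f,X_g\rangle\subset\mathcal C$ with $f,g$ in involution; explicitly, $f$ and $g$ are independent first integrals of $X$ in involution. I claim this $\widehat{\mathcal D}$ is a complete integral, i.e.\ $\omega|_{\widehat{\mathcal D}}=0$. Since $\omega=X_1(U)\wedge X_2(U)$ for generators $X_1,X_2$ of $\mathcal D$, and $X\in\mathcal D$, write $X=X_1$ without loss of generality, so $\omega=X(U)\wedge X_2(U)$. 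Now $\omega|_{\widehat{\mathcal D}}=0$ amounts to showing that $X(U)$ and $X_2(U)$ are both proportional to a common form when restricted to $\widehat{\mathcal D}$; since $\widehat{\mathcal D}=\langle X_f,X_g\rangle$, it suffices to check $\omega(X_f,X_g)=0$. Using $\omega=X(U)\wedge X_2(U)$, this is $X(U)(X_f)\,X_2(U)(X_g)-X(U)(X_g)\,X_2(U)(X_f)$; now $X(U)(X_f)=dU(X,X_f)=-X_f(U)(X)=-(df-f_zU)(X)=-df(X)=-X(f)=0$ since $f$ is a first integral of $X$, and likewise $X(U)(X_g)=0$. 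Hence $\omega(X_f,X_g)=0$, so $\omega|_{\widehat{\mathcal D}}=0$ and $\widehat{\mathcal D}$ is a complete integral.

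The main obstacle I anticipate is the first-to-second direction in the case $X$ has type~$3$: there the integrable envelope $\widehat{\mathcal D}=\mathcal D_X=\{U=X(U)=X^2(U)=0\}$ is \emph{unique} (by the Remark after Theorem~\ref{Teoremone}), so one must verify that this particular $\mathcal D_X$ — rather than some conveniently chosen one — carries the vanishing of $\omega$; the computation above via first integrals handles this uniformly, but one should be careful that $f,g$ really can be taken as first integrals of $X$ (which is exactly property~$4)$ of Theorem~\ref{Teoremone}). A secondary subtlety is the word ``almost'' in the surrounding text: the correspondence is not a bijection (a type~$2$ field lies in a whole family of integrable $2$-planes, each giving a complete integral, and conversely different complete integrals may share the same $X$), so the statement should be read as producing \emph{a} complete integral from \emph{a} generalized intermediate integral and vice versa, not a canonical one-to-one correspondence; I would remark on this explicitly rather than attempt to force a bijection.
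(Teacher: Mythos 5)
Your proposal is correct and follows essentially the same route as the paper: Theorem \ref{Teoremone} gives the integrable $2$-plane through a field of type $<4$, and the non-trivial intersection $\widehat{\mathcal D}\cap\mathcal D$ follows from $\mathcal D$ being the radical of the rank-$2$ form $\omega|_{\mathcal C}$. Your explicit check that $\omega$ vanishes on $\widehat{\mathcal D}$ (via $X(f)=X(g)=0$) is a detail the paper leaves implicit, but it is the same argument in substance.
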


\begin{proof}
If $X\in\mathcal{D}$ is of type $2$ or $3$, then it belongs to at least one
lagrangian integrable distribution $\widehat{\mathcal{D}}$ (Theorem
\ref{Teoremone}). Conversely, a complete integral $\widehat{\mathcal{D}}$,
whose fields are all of type $2$ or $3$, has a non trivial intersection with
$\mathcal{D}$: any non zero vector field in $\mathcal{D}\cap\widehat
{\mathcal{D}}$ is a generalized intermediate integral.
\end{proof}

\medskip

Note that the correspondence between intermediate integrals and complete
integrals is not biunivocal. Namely, when $X$ is of type $2$ it belongs to a
family of integrable distributions, whereas, when it is of type $3$ the
distribution is unique. Conversely, if $\dim\mathcal{D}\cap\widehat
{\mathcal{D}}=2$, i.e. $\mathcal{D}$ is integrable, then every field in
$\mathcal{D}$ is an intermediate integral; if, instead, $\dim\mathcal{D}%
\cap\widehat{\mathcal{D}}=1$, then the intermediate integral is unique (up to
a multiple). As we shall see in the next section, the latter is the generic case.

\subsection{The general case: proof of Theorem \ref{maintheor1}%
\label{subsec_Maintheor1}}

Let us assume assume that there exists a complete integral of $\mathcal{E}%
_{\omega}$. Then, by Proposition \ref{Equivalenza_Intermedio_Completo}, there
exists a generalized intermediate integral $Z\in\mathcal{D}$. As $Z$ is of
type less than $4$, by Theorem \ref{Teoremone} one has that, up to
contactomorphisms and a factor,
\[
Z=\partial_{p}+a\partial_{q}.
\]
Therefore, $\mathcal{D}$ is spanned by $Z$ and a vector field orthogonal to
it,
\[
W=\widehat{\partial}_{y}-a\widehat{\partial}_{x}+b\partial_{q},
\]
so that, up to a factor, is $\omega=Z(U)\wedge W(U)$, i.e.%
\[
\omega=-(dx+ ady)\wedge(dq-a dp-b dy)
\]
whose associated equation $\mathcal{E}_{\omega}$ is (\ref{Bryant_bis}), i.e.
\begin{equation}
z_{yy}-2a z_{xy}+a^{2}z_{xx}=b.\label{BG_form}%
\end{equation}
Viceversa, an equation of the above form
%(\ref{Bryant bis})
admits the characteristic field $Z=\partial_{p}+a\partial_{q}$
which belongs to the integrable distribution
$\widehat{\mathcal{D}}=<\partial _{p},\partial_{q}>$. This
completes the proof of Theorem \ref{maintheor1}.

\medskip The condition of the existence of a complete integral seems to be not
very restrictive in the $C^{\infty}$ category, as we shall see in section
\ref{subsubsec: A_rem_ex}. Furthermore we shall prove in section
\ref{subsubsec:The_anal_case} that, in the analytic case, this condition is
not a restriction at all.

\subsubsection{Does a complete integral always exist?}

\label{subsubsec: A_rem_ex}

Here we shall see how a large class of ($C^{\infty}$) parabolic MAE's admits a
complete integral and, hence, is reducible to normal form (\ref{Bryant_bis}).
Let us consider the parabolic MAE:
\[
z_{xy}^{2}-z_{xx}z_{yy}+Tz_{xx}-2Sz_{xy}+Rz_{yy}+S^{2}-RT=0
\]
which is associated with the distribution $\mathcal{D}$ spanned by vector
fields
\begin{equation}
\label{eq:X_and_Y_per la millesima_volta}X=\widehat{\partial}_{x}%
+R\partial_{p}+S\partial_{q}\,,\quad Y=\widehat{\partial}_{y}+S\partial
_{p}+T\partial_{q}%
\end{equation}
(see the end of section \ref{sec.par.MAE}). Assume either $R$ to be
independent of $q$ or $T$ to be independent of $p$. Then $\mathcal{D}$
contains a vector field of type $2$ or $3$. In fact, if $\partial_{q}(R)=0$,
then $[X,\partial_{q}]=-\partial_{q}(S)\partial_{q}$, so that the distribution
$<X,\partial_{q}>$ is integrable and the assertion follows from Theorem
\ref{Teoremone}. In the second case ($\partial_{p}(T)=0$) $Y$ belongs to the
integrable distribution $<Y,\partial_{p}>$.

\smallskip As an example, in order to give completely explicit computations,
we assume $R=1$. The distribution $<X,\partial_{q}>$ is integrable and spanned
by three common first integrals of the generators, namely:
\[
\left\langle X,\partial_{q}\right\rangle =\left\{  dy=d\alpha=d\beta
=0\right\}  \,,\quad\alpha=z-\frac{p^{2}}{2}, \quad\beta=x-p.
\]
Then $\{y=k_{1},\,\alpha=k_{2},\,\beta=k_{3}\}$, $k_{i}\in\mathbb{R}$, turns
out to be a complete integral of the MAE under consideration. A direct
computation shows that $U=d\alpha-p\,d\beta-q\,dy$. Therefore, functions
\[
\overline{x}=\beta=x-p,\;\;\overline{y}=y,\;\;\overline{z}=\alpha
=z-\frac{p^{2}}{2},\;\;\overline{p}=p,\;\;\overline{q}=q
\]
are contact coordinates, with respect to which $X$ and $Y$ are given by%
\[
X=\partial_{\overline{p}}+S\partial_{\overline{q}}\,, \quad Y=\widehat
{\partial}_{\overline{y}}-S\widehat{\partial}_{\overline{x}}+S\partial
_{\overline{p}}+T\partial_{\overline{q}}.
\]
Since $\partial_{\overline{p}}=X-S\partial_{\overline{q}}$, $\mathcal{D}$ is
spanned by%
\[
X=\partial_{\overline{p}}+S\partial_{\overline{q}},\, \quad Y^{\prime
}=\widehat{\partial}_{\overline{y}}-S\widehat{\partial}_{\overline{x}%
}+(T-S^{2})\partial_{\overline{q}}
\]
and the associated equation becomes
\[
\overline{z}_{\overline{y}\overline{y}}-2S\overline{z}_{\overline{x}%
\overline{y}}+S^{2}\overline{z}_{\overline{x}\overline{x}}-(T-S^{2})=0.
\]

\subsubsection{The analytic case}

\label{subsubsec:The_anal_case}

In \cite{Bryant Griffiths} it is proved that every parabolic MAE with real
analytic coefficients can be reduced to form (\ref{Bryant_bis}) by means of
Cartan-K\"{a}hler theorem. In this section we give an alternative proof based
only on the Cauchy-Kovalevsky theorem.

\smallskip As we already explained, all that we have to do is to find a
complete integral. As a first step, we give some equivalent formulations of
this problem without yet assuming the analyticity condition.

\begin{lemma}
A vector field $Z\in\mathcal{C}$ is of type less than $4$ if and only if it
admits a first integral $f$ satisfying the equation%
\begin{equation}
\label{Equazione_Infernale}dU(Z,[Z,X_{f}])=0\,,\,\,\, \text{with}\,\,\,
X_{f}\neq0.
\end{equation}

\end{lemma}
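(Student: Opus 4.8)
The plan is to prove both implications by unwinding the definition of type in terms of the rank of the system $U, Z(U), Z^2(U), Z^3(U)$, and relating $Z^2(U)$ to the Lie bracket $[Z,X_f]$ for a first integral $f$. The key algebraic identity is that for any $Z\in\mathcal{C}$ with first integral $f$, the hamiltonian field $X_f$ satisfies $X_f\in Z^\bot$ (since $dU(Z,X_f)=X_f(U)(Z)=(df-f_z U)(Z)=0$), so $X_f$ lies in the $3$-dimensional distribution $Z^\bot=\{U=Z(U)=0\}$. Then I compute $dU(Z,[Z,X_f])$ using the Cartan-type formula $dU(Z,[Z,X_f]) = Z\big(dU(Z,X_f)\big) - [Z,X_f]\rfloor(Z\rfloor dU) \cdot(\dots)$; more precisely, since $Z\rfloor dU = Z(U)$ (for $Z\in\mathcal{C}$) and $dU(Z,X_f)=0$ identically, one gets $dU(Z,[Z,X_f]) = -Z(U)([Z,X_f]) = -\big(Z(Z(U)(X_f)) - Z^2(U)(X_f)\big) = Z^2(U)(X_f)$, using that $Z(U)(X_f)=dU(Z,X_f)=0$. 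So the equation $dU(Z,[Z,X_f])=0$ is exactly the condition $Z^2(U)(X_f)=0$.

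For the forward implication, suppose $Z$ has type less than $4$, i.e. $U,Z(U),Z^2(U),Z^3(U)$ are linearly dependent. I would split into the type $2$ and type $3$ cases. If $Z$ is of type $2$, then $Z^2(U)\in\langle U, Z(U)\rangle$, and since $U(X_f)=0$ and $Z(U)(X_f)=dU(Z,X_f)=0$ for \emph{any} first integral $f$ of $Z$, we immediately get $Z^2(U)(X_f)=0$, so every hamiltonian field of a first integral works (and $X_f\neq0$ whenever $df\not\equiv0\bmod U$, which holds for a non-constant first integral). If $Z$ is of type $3$, then by Theorem \ref{Teoremone} (or the discussion of type $3$ fields) $Z$ is characteristic for the integrable distribution $\mathcal{D}_Z=\{U=Z(U)=Z^2(U)=0\}=\langle Z,Y\rangle$; by Theorem \ref{Caratterizzazione_Integrabilita} this is spanned by two hamiltonian fields $X_f,X_g$ in involution with $f,g$ first integrals of $Z$. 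Taking such an $f$, we have $X_f\in\mathcal{D}_Z$, hence $Z^2(U)(X_f)=0$, i.e. $dU(Z,[Z,X_f])=0$; and $X_f\neq0$ since $f$ is functionally independent enough to be part of a coordinate system.

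For the converse, suppose $Z$ admits a first integral $f$ with $X_f\neq0$ and $dU(Z,[Z,X_f])=0$, i.e. $Z^2(U)(X_f)=0$. I want to conclude $Z$ has type less than $4$. Here I would use that $X_f$, being of type $2$ and a hamiltonian field, has first integrals; in fact $X_f(f)=0$, and also $Z(f)=0$, so $f$ is a common first integral of $Z$ and $X_f$. The vanishing $Z^2(U)(X_f)=0$ together with $U(X_f)=Z(U)(X_f)=0$ says $X_f$ lies in the distribution $\mathcal{P}=\{U=Z(U)=Z^2(U)=0\}$. If $\dim\mathcal{P}\le 2$ then (since $Z,X_f$ both lie in it and are independent — they are independent because $Z\rfloor dU=Z(U)\neq0$ while $X_f\rfloor dU = df+\lambda U$, and $df,U$ independent) we get $\dim\mathcal{P}=2$, forcing $U,Z(U),Z^2(U)$ to have rank $3$ and $\mathcal{P}=\langle Z,X_f\rangle$; then one checks $\mathcal{P}$ is integrable (it contains $[Z,X_f]$, which is orthogonal to both $Z$ — by hypothesis — and to $X_f$ — since $f$ is a common first integral), so $Z$ is characteristic for an integrable $3$-dimensional... wait, $\mathcal{P}$ is $2$-dimensional, so $Z$ is of type $3$ by the characterization. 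If instead $\dim\mathcal{P}=3$, then $U,Z(U),Z^2(U)$ have rank $2$, so already $Z$ has type $\le 3$ (in fact type $2$). In all cases type $< 4$.

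The main obstacle I expect is the bookkeeping in the converse direction: carefully establishing that $X_f$ is not in the span of $Z$ alone (needed so that $Z^2(U)(X_f)=0$ is genuine information about the rank), and cleanly handling the dimension-count for $\mathcal{P}=\{U=Z(U)=Z^2(U)=0\}$ to land on the right type. The identity $dU(Z,[Z,X_f])=Z^2(U)(X_f)$ and the membership $X_f\in Z^\bot$ are routine, but one must be attentive that all these hold for a general (not exact) choice of $U$, and that $X_f$ depends on the choice of $U$ only up to scale — which is harmless since type and the vanishing condition are scale-invariant. I would also double-check the edge case where the first integral $f$ is such that $df\equiv 0\bmod U$; but that would force $X_f=0$, which is excluded by hypothesis, so it does not arise.
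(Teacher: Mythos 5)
Your proposal is correct and, at its core, takes the same route as the paper: the decisive step in the converse direction is in both cases the observation that $[Z,X_f]$ is orthogonal to both $Z$ (by hypothesis) and $X_f$ (automatically, since $Z(f)=0$), hence lies in the lagrangian distribution $\langle Z,X_f\rangle$, which is therefore integrable, and Theorem \ref{Teoremone} (or the definition of type) then gives type $<4$; your identity $dU(Z,[Z,X_f])=\pm Z^{2}(U)(X_f)$ and the dimension count on $\{U=Z(U)=Z^{2}(U)=0\}$ are a more explicit but equivalent packaging, and your forward direction reproves the relevant half of Theorem \ref{Teoremone} case by case where the paper just cites it. The one flawed sub-step is your justification that $Z$ and $X_f$ are independent: the fact that both $Z\rfloor dU$ and $X_f\rfloor dU$ are nonzero does not prevent $X_f$ from being proportional to $Z$; the correct fix is the paper's opening move --- if $Z$ is a multiple of $X_f$ it is of type $2$ and there is nothing to prove, so one may assume independence.
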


\begin{proof}
If $Z$ is a multiple of $X_{f}$ for some $f$, then both of them are of type
$2$. So, we can assume that they are independent. It is easy to prove that if
$Z(f)=0$ then $dU(X_{f},[Z,X_{f}])=0$. Assume that the first integral $f$ is a
solution of (\ref{Equazione_Infernale}); then $[Z,X_{f}]$ is orthogonal to the
lagrangian distribution spanned by $Z$ and $X_{f}$ and, hence, belongs to it;
but this implies that such distribution is integrable. By applying Theorem
\ref{Teoremone} one obtains that $Z$ is of type $2$ or $3$.

Conversely, if $Z$ is of type $2$ or $3$ then, again by Theorem
\ref{Teoremone}, $Z$ linearly depends on two fields $X_{f}$, $X_{g}$ with $f$
and $g$ in involution: obviously, both functions are solutions of
(\ref{Equazione_Infernale}).
\end{proof}

\begin{theorem}
\label{Teoremone2}Let $\mathcal{D}=<X,Y>$ be the lagrangian distribution
associated with equation $\mathcal{E}_{\omega}$. Then, the following
equivalences hold:

\begin{itemize}
\item[1)] There exists a complete integral of $\mathcal{E}_{\omega}$;

\item[2)] There exists a generalized intermediate integral;

\item[3)] There exists a field $Z\in\mathcal{D}$ such that type $Z<4$;

\item[4)] There exists a field $Z\in\mathcal{D}$ which is also contained in an
integrable lagrangian distribution $\widehat{\mathcal{D}}$;

\item[5)] There exists an integrable lagrangian distribution $\widehat
{\mathcal{D}}$ such that the graph of the corresponding section $J^{1}%
(\tau)\rightarrow J^{2}(\tau)$ is contained in $\mathcal{E}_{\omega}$.

\item[6)] There exists a function $f\in C^{\infty}(J^{1}(\tau))$ such that the
field $Z_{f}=Y(f)X-X(f)Y$ satisfies the equation%
\begin{equation}
\label{eq:equazione_impossibile}dU(Z_{f},[Z_{f},X_{f}])=0;
\end{equation}

\end{itemize}
\end{theorem}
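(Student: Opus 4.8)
The plan is to prove Theorem~\ref{Teoremone2} by establishing a cycle of implications that routes through the already-proved machinery, rather than proving each equivalence in isolation. The statements $1) \Leftrightarrow 2)$ is exactly Proposition~\ref{Equivalenza_Intermedio_Completo}, and $2) \Leftrightarrow 3)$ is merely Definition~\ref{integrale_intermedio_generalizzato} unwound. So the real content is linking $3)$, $4)$, $5)$ and $6)$. For $3) \Rightarrow 4)$, a field $Z \in \mathcal{D}$ with type $Z<4$ is, by Theorem~\ref{Teoremone} (equivalence $1) \Leftrightarrow 5)$ there), contained in an integrable $2$-dimensional subdistribution $\widehat{\mathcal{D}}$ of $\mathcal{C}$; and $\widehat{\mathcal{D}}$ is automatically lagrangian since every integrable $2$-distribution in $\mathcal{C}$ is (it equals $\{df=dg=dh=0\}$ with $U$ dependent on $df,dg,dh$, as in Theorem~\ref{Caratterizzazione_Integrabilita}, hence self-orthogonal). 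For $4) \Rightarrow 3)$, one just observes that any nonzero $Z$ lying in an integrable $2$-distribution is of type less than $4$ (again Theorem~\ref{Teoremone}), so the same $Z$ witnesses $3)$.

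Next I would tie in $5)$. The point is that an integrable lagrangian distribution $\widehat{\mathcal{D}}$ \emph{is} a complete integral in the sense of Definition~\ref{Integrale_Completo} precisely when $\omega|_{\widehat{\mathcal{D}}} = 0$, and this vanishing is exactly the statement that the section $J^1(\tau) \to J^2(\tau)$ determined by $\widehat{\mathcal{D}}$ (sending each point to the $2$-jet whose $R$-plane is $\widehat{\mathcal{D}}_\theta$) has graph inside $\mathcal{E}_\omega$, by the very definition \eqref{MA_Lychagin} of $\mathcal{E}_\omega$. So $5) \Leftrightarrow 1)$ is essentially a restatement of Definition~\ref{Integrale_Completo} together with the $R$-plane/$2$-jet correspondence recalled in Section~\ref{sec.preliminary}; one only needs to check that an integrable lagrangian $2$-distribution in $\mathcal{C}$ genuinely corresponds to an $R$-plane field, i.e. that after reducing it to $\langle \widehat\partial_x, \widehat\partial_y\rangle$ (allowed by the remark following Theorem~\ref{Caratterizzazione_Integrabilita}) it is the $R$-plane field of the zero section, and that the condition $\omega|_{\widehat{\mathcal{D}}}=0$ is contact-invariant. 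The bridge between $4)$ and $5)$ is then immediate in both directions: the $Z$ in $4)$ sits in $\mathcal{D} \cap \widehat{\mathcal{D}}$, and conversely the $\widehat{\mathcal{D}}$ of $5)$ meets $\mathcal{D}$ nontrivially because both are lagrangian (two lagrangian $2$-planes in the $4$-dimensional symplectic space $\mathcal{C}_\theta$ always intersect).

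The last piece, $6)$, is where the new computational identity enters, and I expect this to be the main obstacle. The idea is to run the argument of the Lemma preceding this theorem with $Z$ replaced by the explicit field $Z_f = Y(f)X - X(f)Y$, which by construction lies in $\mathcal{D}$ and has $f$ as a first integral (since $Z_f(f) = Y(f)X(f) - X(f)Y(f) = 0$). By that Lemma, $Z_f$ is of type less than $4$ iff $f$ satisfies $dU(Z_f, [Z_f, X_f]) = 0$ with $X_f \neq 0$; and $Z_f$ of type less than $4$, being in $\mathcal{D}$, is precisely a generalized intermediate integral, giving $6) \Rightarrow 3)$. The reverse direction $3) \Rightarrow 6)$ is the delicate one: given \emph{some} field $Z \in \mathcal{D}$ of type less than $4$, I must produce a \emph{function} $f$ for which $Z_f$ is parallel to $Z$ (so that the hypotheses of the Lemma apply and $X_f \neq 0$). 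By Proposition~\ref{integrale_intermedio} and its proof, any first integral $f$ of $Z$ with $X_f \in \mathcal{D}$ yields $X_f$ parallel to the characteristic field, but here I instead want $Z_f$ parallel to $Z$; a natural candidate is to take $f$ to be a first integral of $Z$ that is \emph{not} an intermediate integral (so $X_f \notin \mathcal{D}$), in which case $Y(f)X - X(f)Y$ is a nonzero field in $\mathcal{D}$ annihilated by $df$, hence — $\mathcal{D}$ being $2$-dimensional and $Z$ also annihilated by $df$ along with spanning together with one transverse direction — proportional to $Z$. One then checks $X_f \neq 0$ (which holds as long as $f$ is nonconstant, since $\mathcal{C}$ admits no type-$1$ fields) and invokes the Lemma in the forward direction. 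I would organize the write-up as: first dispatch $1) \Leftrightarrow 2) \Leftrightarrow 3)$ by citing Proposition~\ref{Equivalenza_Intermedio_Completo} and Definition~\ref{integrale_intermedio_generalizzato}; then $3) \Leftrightarrow 4) \Leftrightarrow 5)$ via Theorem~\ref{Teoremone}, the automatic lagrangianity of integrable $2$-distributions, and the $R$-plane reformulation of Definition~\ref{Integrale_Completo}; and finally $3) \Leftrightarrow 6)$ using the preceding Lemma together with the substitution $Z = Z_f$.
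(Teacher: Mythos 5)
Your overall architecture matches the paper's: the equivalences $1)$--$5)$ are assembled from Proposition~\ref{Equivalenza_Intermedio_Completo}, Definition~\ref{integrale_intermedio_generalizzato}, Theorem~\ref{Teoremone} and the definitions, and the new content $6)$ is reduced to the Lemma preceding the theorem via the substitution $Z=Z_{f}$. However, two of your steps are genuinely flawed. First, in $5)\Rightarrow 4)$ you justify $\mathcal{D}\cap\widehat{\mathcal{D}}\neq 0$ by asserting that two lagrangian $2$-planes in a $4$-dimensional symplectic space always intersect nontrivially. This is false: generic pairs of lagrangian planes are transverse (e.g.\ $\langle\partial_{p},\partial_{q}\rangle$ and $\langle\widehat{\partial}_{x},\widehat{\partial}_{y}\rangle$ in $\mathcal{C}_{\theta}$). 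The correct reason, implicit in the proof of Proposition~\ref{Equivalenza_Intermedio_Completo}, is that $\omega|_{\mathcal{C}}$ is decomposable with radical exactly $\mathcal{D}$: if $\widehat{\mathcal{D}}_{\theta}$ were transverse to $\mathcal{D}_{\theta}$ it would project isomorphically onto $\mathcal{C}_{\theta}/\mathcal{D}_{\theta}$, where $\omega$ induces a volume form, contradicting $\omega|_{\widehat{\mathcal{D}}}=0$. The step is salvageable (and in your scheme can anyway be rerouted through $5)\Rightarrow 1)\Rightarrow\cdots\Rightarrow 4)$), but as written the justification is wrong.

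Second, and more seriously, your recipe for $3)\Rightarrow 6)$ --- take $f$ to be \emph{any} first integral of $Z$ which is not an intermediate integral --- fails when $Z$ is of type $3$. The Lemma asserts the existence of \emph{some} first integral satisfying (\ref{Equazione_Infernale}), not that every first integral does. Indeed, given $Z(f)=0$, the condition $dU(Z,[Z,X_{f}])=0$ says precisely that $\langle Z,X_{f}\rangle$ is integrable; by the Remark following Theorem~\ref{Teoremone}, a type-$3$ field $Z$ lies in exactly \emph{one} integrable $2$-distribution $\widehat{\mathcal{D}}$, so only the first integrals of $\widehat{\mathcal{D}}$ work, and these form a strictly smaller family than the first integrals of $Z$. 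Concretely, $Z=\partial_{p}+p\,\partial_{q}$ is of type $3$, its unique integrable distribution is $\langle\partial_{p},\partial_{q}\rangle$ with first integrals the functions of $x,y,z$, and the first integral $f=q-p^{2}/2$ of $Z$ gives a non-integrable $\langle Z,X_{f}\rangle$, hence $dU(Z,[Z,X_{f}])\neq 0$. The repair is exactly the paper's route: prove $4)\Rightarrow 6)$ instead, choosing $f$ with $X_{f}\in\widehat{\mathcal{D}}$ (Theorem~\ref{th.sostiutivo}); then $Z(f)=0$ and $Z_{f}(f)=0$ with $Z,Z_{f}\in\mathcal{D}$ force $Z_{f}$ to be proportional to $Z$ whenever $Z_{f}\neq 0$, so that (\ref{eq:equazione_impossibile}) follows from $Z_{f},[Z_{f},X_{f}]\in\widehat{\mathcal{D}}$ lagrangian, while $Z_{f}=0$ makes (\ref{eq:equazione_impossibile}) trivially true. (Relatedly, your $6)\Rightarrow 3)$ silently assumes $Z_{f}\neq 0$; when $Z_{f}=0$ one has $X(f)=Y(f)=0$, hence $X_{f}\in\mathcal{D}^{\bot}=\mathcal{D}$, and one takes $Z=X_{f}$, as the paper does.)
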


\begin{proof}
The equivalence of properties $1)$, $2)$, $3)$, $4)$, $5)$ has been already
proved. Let us focus on the equivalence between $4)$ and $6)$. First, $4)$
implies $6)$. In fact, let us suppose $Z\in\widehat{\mathcal{D}}$. Since
$\widehat{\mathcal{D}}$ is integrable, there exists a function $f$ such that
$X_{f}\in\widehat{\mathcal{D}}$ (see Theorem \ref{th.sostiutivo}) and
$Z(f)=0$, which implies that $Z$ is a multiple of $Z_{f}$. So $Z_{f}%
,[Z_{f},X_{f}]\in\widehat{\mathcal{D}}$, that is lagrangian, and
(\ref{eq:equazione_impossibile}) follows. Second, $6)$ implies $4)$. If
$Z_{f}=0$, then $X(f)=Y(f)=0$, which implies $X_{f}\in\mathcal{D}$. Then we
can choose $Z=X_{f}$. If $Z_{f}\neq0$, then it is sufficient to apply previous
lemma with $Z=Z_{f}$.
\end{proof}

\smallskip The determining equation (\ref{eq:equazione_impossibile}) provides
a tool for proving the existence of a complete integral in the real analytic case.

\begin{theorem}
\label{Better_Than_Bryant}Any parabolic analytic MAE admits a complete
integral. In particular, it can be reduced to form (\ref{Bryant_bis}).
\end{theorem}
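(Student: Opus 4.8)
The plan is to reduce the statement to proving the solvability of the determining equation (\ref{eq:equazione_impossibile}) in the analytic category, and then to apply the Cauchy--Kovalevsky theorem. By Theorem \ref{maintheor1} and Theorem \ref{Teoremone2}, it suffices to exhibit, for an arbitrary parabolic analytic MAE $\mathcal{E}_{\omega}$ with $\mathcal{D}=<X,Y>$, a function $f\in C^{\omega}(J^{1}(\tau))$ such that the field $Z_f = Y(f)X - X(f)Y$ satisfies $dU(Z_f,[Z_f,X_f])=0$. So the first step is to write this condition out explicitly in contact coordinates $(x,y,z,p,q)$, using the coordinate expressions (\ref{generators_D}) for $X$ and $Y$ and the standard formula for $X_f$. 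The outcome should be a single scalar PDE for the unknown $f$ in the five variables $x,y,z,p,q$.

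The second step is to identify the \emph{order} and \emph{principal part} of that PDE. Since $Z_f$ is built from first derivatives of $f$ and $[Z_f,X_f]$ involves commuting two such fields, and then $dU(\cdot,\cdot)$ is bilinear in them, I expect (\ref{eq:equazione_impossibile}) to be a second-order PDE in $f$, nonlinear but with principal symbol polynomial in the first derivatives of $f$. The key point is to check that, after a suitable (analytic, contact) change of coordinates — for instance one adapted to a chosen non-characteristic direction, or one bringing some first integral of $X$ into a coordinate as in Theorem \ref{th.sostiutivo} — the equation can be put in Cauchy--Kovalevsky normal form: solved for a pure second derivative, say $f_{yy}$, in terms of $x,y,z,p,q,f$ and the remaining first and second derivatives, with the right-hand side analytic near a reference point. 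If a direct choice of coordinates does not immediately achieve this, one can first normalise $\mathcal{D}$ using the freedom in (\ref{generators_D}) (partial Legendre transformations and the general remark at the end of section \ref{sec.par.MAE}) so that $X$ and $Y$ take a convenient shape, and only then compute the symbol.

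Once the equation is in normal form, the third step is routine: prescribe analytic Cauchy data on a non-characteristic hypersurface (e.g.\ $\{y = y_0\}$) chosen so that the genericity hypothesis $X_f\neq 0$ is met along it — it is enough to pick initial data for $f$ and $f_y$ whose gradient is not proportional to $U$ at the reference point, which is an open condition — and invoke Cauchy--Kovalevsky to obtain a local analytic solution $f$. That solution is then a generalized intermediate integral via Theorem \ref{Teoremone2}, hence by Proposition \ref{Equivalenza_Intermedio_Completo} yields a complete integral, and finally Theorem \ref{maintheor1} (whose ``if'' direction was established in section \ref{subsec_Maintheor1}) puts $\mathcal{E}_{\omega}$ into the form (\ref{Bryant_bis}).

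The main obstacle I anticipate is the second step: verifying that (\ref{eq:equazione_impossibile}) genuinely is of Kovalevsky type, i.e.\ that its top-order part can be solved for a single leading second derivative with an analytic right-hand side, and that the non-characteristic hypersurface carrying the Cauchy data is compatible with the side condition $X_f \neq 0$. This requires an honest computation of the principal symbol of the determining equation, and possibly a preliminary coordinate normalisation of $\mathcal{D}$ to ensure the symbol is non-degenerate in the chosen direction; degenerate configurations (where the natural leading derivative drops out) would have to be handled by a different coordinate choice. Everything else — the coordinate expansion of $Z_f$, $[Z_f,X_f]$ and $dU$, and the final citation of Cauchy--Kovalevsky — is mechanical.
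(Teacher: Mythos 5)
Your proposal follows essentially the same route as the paper: rewrite the determining equation (\ref{eq:equazione_impossibile}) in the coordinates (\ref{generators_D}) as a second-order quasilinear PDE $\sum_{i,j}A^{ij}f_{x^ix^j}+B=0$ with $A^{ij},B$ analytic in the coordinates and the first derivatives of $f$, then invoke Cauchy--Kovalevsky on a suitable analytic non-characteristic hypersurface to produce a generalized intermediate integral, hence a complete integral and the normal form via Theorems \ref{Teoremone2} and \ref{maintheor1}. The ``main obstacle'' you flag (verifying the Kovalevsky normal form and the compatibility of the data with $X_f\neq 0$) is precisely the step the paper dispatches as a straightforward coordinate computation, so your plan matches the published argument.
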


\begin{proof}
Equation (\ref{eq:equazione_impossibile}) can be written in the equivalent
form:
\begin{equation}
Y(f)^{2}dU(X,[X,X_{f}])-2X(f)Y(f)dU(X,[Y,X_{f}])+X(f)^{2}dU(Y,[Y,X_{f}])=0.
\label{eq:equazione_impossibile_2}%
\end{equation}

It is straightforward to check that this equation, in a contact chart where
$X$ and $Y$ assume the form (\ref{eq:X_and_Y_per la millesima_volta}), takes
the form
\begin{equation}
\label{eq:local_expr_of_eq_imp}\sum_{i,j=1}^{5} A^{ij}f_{x^{i}x^{j}}+ B=0,
\end{equation}
where we have denoted by $(x^{1},x^{2},x^{3},x^{4},x^{5})$ the chart
$(x,y,z,p,q)$, and $A^{ij}$ and $B$ are analytic functions of $x^{1}%
,\dots,x^{5},f_{x^{1}},\dots,f_{x^{5}}$. Hence, by applying Cauchy-Kovalevsky
theorem to equation (\ref{eq:local_expr_of_eq_imp}), the existence of a
complete integral in a neighborhood of an arbitrary analytic hypersurface of
$J^{1}(\tau)$ is proved.
\end{proof}

\subsection{The non generic case: proof of Theorem \ref{maintheor2}%
\label{subsec_Maintheor2}}

In the previous section (proof of Theorem \ref{maintheor1}) we derived the
normal form (\ref{BG_form}) of a parabolic MAE admitting a complete integral
from that  of the associated characteristic distribution:%
\[
\mathcal{D}=<\partial_{p}+a\partial_{q}\,,\,\widehat{\partial}%
_{y}-a\widehat{\partial}_{x}+b\partial_{q}>
\]

As we have already seen, such canonical form holds for all analytic parabolic
MAE's and for a large class of $C^{\infty}$ ones (indeed, we strongly suspect,
for all). In particular, one can reduce to form (\ref{BG_form}) all
\textit{non generic} parabolic MAE's, i.e. those for which $\mathcal{D}%
^{\prime\prime}$ has dimension less than $5$. However, for such equations more
precise normal forms can be obtained.

\begin{theorem}
\label{Forme_Normali_Distribuzioni} Let $\mathcal{D}\subset\mathcal{C}$ be a
non generic lagrangian distribution. Then, there exist contact local
coordinates on $J^{1}(\tau)$ in which $\mathcal{D}$ takes one the following
normal forms:

\begin{itemize}
\item[a)] $\mathcal{D}=<\widehat{\partial}_{x},\,\partial_{q}>$;

\item[b)]
$\mathcal{D}=<\partial_{p},\,\widehat{\partial}_{y}+b\partial_{q}>$,\,\,
$b\in C^{\infty}(J^{1}(\tau))$, \,\, $\partial_{p}(b)\neq0$;

\item[c)]
$\mathcal{D}=<\partial_{p}+z\partial_{q},\,\widehat{\partial}_{y}
-z\widehat{\partial}_{x}+b\partial_{q}>$, \,\, $b\in
C^{\infty}(J^{1}(\tau))$, \,\,
$\partial_{p}(b)+z\partial_{z}(b)\neq0$.
\end{itemize}
\end{theorem}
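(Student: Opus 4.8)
The plan is to run down the derived flag of $\mathcal{D}$: for a non generic lagrangian distribution this flag can behave in only three ways, and I will match them one-to-one with the normal forms a), b), c).

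I would begin with two elementary observations. Since $\mathcal{D}$ is lagrangian it is $dU$-isotropic, so for $X,Y\in\mathcal{D}$ we get $U([X,Y])=-dU(X,Y)=0$, i.e.\ $\mathcal{D}'\subset\mathcal{C}$; hence if $\mathcal{D}$ is not integrable then $\dim\mathcal{D}'=3$, and by Proposition~\ref{3_mai_integrabile} $\mathcal{D}'$ is not integrable and $\mathcal{D}''=(\mathcal{D}')'\not\subset\mathcal{C}$, so being non generic forces $\dim\mathcal{D}''=4$. Furthermore $(\mathcal{D}')^{\bot}\subset\mathcal{D}^{\bot}=\mathcal{D}$ is a line; if $X$ spans it then $\mathcal{D}'\subset X^{\bot}$ with both $3$-dimensional, so $\mathcal{D}'=X^{\bot}$, whence $\dim(X^{\bot})'=4$ and $X$ is of type $2$ by Proposition~\ref{Ortogonale_Tipo_2}. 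This leaves exactly three situations to treat: $\mathcal{D}$ integrable; $\mathcal{D}$ non integrable with $\mathcal{D}''$ integrable; $\mathcal{D}$ non integrable with $\mathcal{D}''$ non integrable.

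For the integrable case I would invoke Theorem~\ref{Caratterizzazione_Integrabilita} together with point $5)$ of Theorem~\ref{th.sostiutivo} to get a contact chart with $\mathcal{D}=\langle\partial_{p},\partial_{q}\rangle$, then apply a partial Legendre transformation to reach a). For the other two cases, $(X^{\bot})'=\mathcal{D}''$ is $4$-dimensional, so by Proposition~\ref{caso_4_4} it is integrable if and only if $X$ is a multiple of a hamiltonian field. Using Theorem~\ref{Campi_Tipo2} (and the two alternatives $\sigma=df$, $\sigma=df-g\,dh$ that appear in its proof), I would then put $X$ into the form $\partial_{p}$ when $\mathcal{D}''$ is integrable and into the form $\partial_{p}+z\partial_{q}$ when it is not, in a suitable contact chart. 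A short computation gives $X^{\bot}=\langle\widehat{\partial}_{y},\partial_{p},\partial_{q}\rangle$ in the first case and $X^{\bot}=\langle\widehat{\partial}_{y}-z\widehat{\partial}_{x},\partial_{p},\partial_{q}\rangle$ in the second. Writing $\mathcal{D}=\langle X,Y\rangle$ with $Y\in X^{\bot}$: the component of $Y$ along $\widehat{\partial}_{y}$ (resp.\ $\widehat{\partial}_{y}-z\widehat{\partial}_{x}$) must be nonzero, for otherwise $\mathcal{D}$ would lie in the integrable distribution $\langle\partial_{p},\partial_{q}\rangle$; so after rescaling $Y$ and subtracting a suitable multiple of $X$ I can take $Y=\widehat{\partial}_{y}+b\partial_{q}$ (case b), respectively $Y=\widehat{\partial}_{y}-z\widehat{\partial}_{x}+b\partial_{q}$ (case c). Finally I would compute the single bracket $[X,Y]$: it equals $\partial_{p}(b)\,\partial_{q}$ in case b and a scalar multiple of $\partial_{q}$ in case c; since $\partial_{q}\notin\mathcal{D}$, the requirement that $\mathcal{D}$ be non integrable is exactly the non-vanishing of that scalar, which yields the non-degeneracy condition in the statement ($\partial_{p}(b)\neq0$, resp.\ $\partial_{p}(b)+z\partial_{z}(b)\neq0$).

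The only step calling for genuine work is this last one: performing the coordinate bracket computations once $X$ has been normalized and checking that the resulting scalar collapses to the stated expression, and—if one wants a self-contained argument—verifying directly that for the model distributions $\mathcal{D}''$ is $4$-dimensional with the claimed integrability type. I expect no real obstacle, since everything rests on the already established classification of type~$2$ Cartan fields (Theorems~\ref{Teoremone}, \ref{Campi_Tipo2}) together with Propositions~\ref{3_mai_integrabile}, \ref{caso_4_4} and \ref{Ortogonale_Tipo_2}; the whole argument is essentially a bookkeeping translation of those results into contact coordinates.
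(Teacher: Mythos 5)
Your overall strategy coincides with the paper's: the same trichotomy ($\mathcal{D}$ integrable; $\mathcal{D}''$ four\-/dimensional and integrable; $\mathcal{D}''$ four\-/dimensional and non integrable), the same identification of the type $2$ generator $X$ of $(\mathcal{D}')^{\bot}$ via Propositions \ref{Ortogonale_Tipo_2} and \ref{caso_4_4}, the same reduction of $X$ to $\partial_{p}$ or $\partial_{p}+z\partial_{q}$ via Theorem \ref{Campi_Tipo2}, and the same use of Theorem \ref{Caratterizzazione_Integrabilita} plus a Legendre map in the integrable case. You are in fact more explicit than the paper in the last step, where you normalize the second generator $Y$ inside $X^{\bot}$ and try to read off the inequalities on $b$; the paper's proof stops at the normal form of $X$ and never derives those inequalities. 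Your preliminary observations and cases a) and b) all check out (in b) one indeed gets $[X,Y]=\partial_{p}(b)\,\partial_{q}$, and $\partial_{p}(b)\neq0$ is exactly non integrability of $\mathcal{D}$).

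The deferred computation, however, does not come out as you claim in case c). With $X=\partial_{p}+z\partial_{q}$ and $Y=\widehat{\partial}_{y}-z\widehat{\partial}_{x}+b\partial_{q}$ one finds
\[
[X,Y]=\bigl(\partial_{p}(b)+z\partial_{q}(b)+zp-q\bigr)\,\partial_{q}
\]
(the $\partial_{z}$-components coming from $[\partial_{p},z\widehat{\partial}_{x}]$ and $[z\partial_{q},\widehat{\partial}_{y}]$ cancel, but $Y(z)=q-zp$ and $\widehat{\partial}_{x}(z)=p$ leave the extra terms $zp-q$ in the $\partial_{q}$-coefficient). Hence non integrability of $\mathcal{D}$ is equivalent to $\partial_{p}(b)+z\partial_{q}(b)+zp-q\neq0$, not to $\partial_{p}(b)+z\partial_{z}(b)\neq0$, and the two conditions are genuinely different: for $b=0$ the distribution $<\partial_{p}+z\partial_{q},\,\widehat{\partial}_{y}-z\widehat{\partial}_{x}>$ is non integrable wherever $zp\neq q$, with $\mathcal{D}''=<\partial_{p},\partial_{q},\partial_{z},\partial_{y}-z\partial_{x}>$ four\-/dimensional and non integrable, yet $\partial_{p}(b)+z\partial_{z}(b)=0$. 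So the assertion that the scalar ``collapses to the stated expression'' is precisely where your argument fails as written: you must either carry out the bracket honestly and state the inequality it actually produces, or exhibit a further contact change of chart preserving $X=\partial_{p}+z\partial_{q}$ that converts one condition into the other --- and nothing in your proposal supplies such a step (nor does the paper's own proof, which omits this verification entirely).
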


\begin{proof}
According to the \textquotedblleft integrability degree" of $\mathcal{D}$, one
can distinguish the following cases:

\begin{itemize}
\item[1)] $\mathcal{D}=\mathcal{D}^{\prime}$, i.e. $\mathcal{D}$ is integrable;

\item[2)] $\mathcal{D}\neq\mathcal{D}^{\prime}$, i.e. $\mathcal{D}$ is non
integrable: in this case $\dim\mathcal{D}^{\prime}=3$ and $\mathcal{D}%
^{\prime}\subset\mathcal{C}$ (the latter property is due to the fact that
$\mathcal{D}$ is lagrangian); by Proposition \ref{3_mai_integrabile},
$\mathcal{D}^{\prime}$ is non integrable.

Case 2) splits into the following subcases:

\begin{itemize}
\item[2-1)] $\mathcal{D}\neq\mathcal{D}^{\prime}\neq\mathcal{D}^{\prime\prime
}$ and $\dim\mathcal{D}^{\prime\prime}=4$; in this case there are two possibilities:

\begin{itemize}
\item[2-1-1)] $\mathcal{D}^{\prime\prime}$ integrable;

\item[2-1-2)] $\mathcal{D}^{\prime\prime}$ non integrable;
\end{itemize}

\item[2-2)] the generic case: $\mathcal{D}\neq\mathcal{D}^{\prime}%
\neq\mathcal{D}^{\prime\prime}$ and $\dim\mathcal{D}^{\prime\prime}=5$.
\end{itemize}
\end{itemize}

\begin{itemize}
\item In case 1), in view of Theorem
\ref{Caratterizzazione_Integrabilita}, in a suitable contact chart
$\mathcal{D}$ takes the form
\[
\mathcal{D}=<\partial_{p},\partial_{q}>
\]
and, by a Legendre transformation, we obtain normal form $a)$.
\item In case 2), $\mathcal{D}^{\prime}$ is determined by a
generator $X$ of its orthogonal complement. Let us examine, first,
case 2-1). From Theorem \ref{Lychagin_Distributions}, and in view
of Theorem \ref{Campi_Tipo2}, one obtains the normal form for the
field $X\in(\mathcal{D}^{\prime})^{\bot}$:

\item the case 2-1-1) corresponds to the normal form $X=\partial_{p}$, so that
we obtain normal form $b)$.

\item the case 2-1-2) corresponds to the normal form $X=\partial_{p}
+z\partial_{q}$, so that we obtain normal form $c)$.

\item The case 2-2) is excluded by hypothesis.
\end{itemize}
\end{proof}

\medskip

Note that it is possible to distinguish the various types of parabolic MAE's
according to the number and kind of their intermediate integrals, namely:
\begin{itemize}
\item[-] in case 1) there are three intermediate integrals, up to
functional dependence, and according to Theorem \ref{Teoremone}
$\mathcal{D}$ contains only vector fields of type less than $4$;

\item[-] in case 2-1-1) there exists only one intermediate
integral and, in view of Proposition \ref{Unico_Tipo2}, only one
vector field of type $2$ which turns out to be hamiltonian;

\item[-] in case 2-1-2) there are no classical intermediate
integrals, but there exists a nonholonomic one in the sense of
\cite{KLR}, which is also, up to a factor, the only vector field
of type $2$ (Proposition \ref{Unico_Tipo2}).

\item[-] in case 2-2) there is not even a nonholonomic integral.
For what said in the previous section, there exists a generalized
intermediate integral (fields of type $3$) in the real analytic
case, while we don't know in the $C^{\infty}$ case.
\end{itemize}

\smallskip
In order to obtain normal of Theorem \ref{maintheor2} by using the
results of previous theorem, it is sufficient to compute
$\mathcal{E}_\omega$ where $\omega=X(U)\wedge Y(U)$ with
$\mathcal{D}=<X,Y>$ (see also the reasoning in the end of section
\ref{sec.par.MAE}). This completes the proof of Theorem
\ref{maintheor2}.

%By a reasoning completely analogous to that in the proof of
%Theorem \ref{maintheor1}, we derive normal forms of non generic
%parabolic MAE's from those of the corresponding characteristic
%distributions found in the previous theorem: this completes the
%proof of Theorem \ref{maintheor2}.

\bigskip
\noindent\textbf{Acknowledgement.} The authors thank A.M.
Vinogradov for drawing their attention to the notion of type of a
Cartan field. The first author thanks J. Mu\~{n}oz, A.
\'{A}lvarez, S. Jim\'{e}nez and J. Rodr\'{\i}guez for many useful
suggestions and encouragements. The second author thanks the
Department of Mathematics \textquotedblleft Ennio De Giorgi" for
financial support.


\begin{thebibliography}{9}                                                                                                %


\bibitem {Alonso Blanco}Alonso-Blanco, R. J. The equations determining
intermediate integrals for Monge-Amp\`{e}re PDE, Proc. Amer. Math. Soc. 132
(2004), no. 8, 2357--2360.

\bibitem {Vinogradov and Co}Bocharov, A. V., et al.: \textit{Symmetries and
Conservation Laws for Differential Equations of Mathematical Physics}, AMS, 1999.

\bibitem {Bryant Griffiths}Bryant, R. L.; Griffiths, P. A.: Characteristic
cohomology of differential systems. II. Conservation laws for a class of
parabolic equations. Duke Math. J. 78 (1995), no. 3, 531--676.

\bibitem {Goursat}Goursat, E.: \textit{Le\c{c}ons sur l'integration des
equations aux deriv\'{e}es partielles du second ordre}, vol. I,
Gauthier-Villars, Paris, 1890.

\bibitem {KLR}Kushner A., Lychagin V., Rubtsov V.: \textit{Contact Geometry
and Non-Linear Differential Equations}, CUP 2007.

\bibitem {Lychagin contact}Lychagin, V.V.: Contact geometry and second-order
nonlinear differential equations, Russian Math. Surveys 34 (1979), no. 1, 149--180.

\bibitem {Muñoz Diaz}Mu\~{n}oz D\'{\i}az, J.: \textit{Ecuaciones diferenciales
I}, Ed. Universidad de Salamanca, 1982.

\bibitem {Tunitsky}Tunitskii, D.V.: On the contact linearization of
Monge-Amp\`{e}re equations, Izv. Math. 60 (1996), no. 2, 425--451.
\end{thebibliography}
\end{document}